\title{Admissible transverse surgery does not preserve tightness}
\author[John A. Baldwin]{John A. Baldwin}
\address{Department of Mathematics \\ Boston College}
\email{john.baldwin@bc.edu}
\author[John B. Etnyre]{John B. Etnyre}
\address{Department of Mathematics \\ Georgia Institute of Technology}
\email{etnyre@math.gatech.edu}
\def\R{{\mathbb{R}}}
\def\C{{\mathbb{C}}}
\def\Z{{\mathbb{Z}}}
\newcommand\hf{\widehat{HF}}
\newcommand\Sc{\text{Spin}^c}
\newcommand\spc{\mathfrak{s}}
\newcommand\lS{\mathcal{S}}
\newcommand\slope{t}
\def\dfn#1{{\em #1}}
\DeclareMathOperator{\tb}{tb}
\newtheorem{theorem}{Theorem}[section]
\newtheorem{lemma}[theorem]{Lemma}
\newtheorem{corollary}[theorem]{Corollary}
\newtheorem{proposition}[theorem]{Proposition}
\newtheorem{question}[theorem]{Question}
\theoremstyle{definition}
\newtheorem{remark}[theorem]{Remark}
\begin{document}
\begin{abstract} 
We produce the first examples of closed, tight contact 3-manifolds which become overtwisted after performing admissible transverse surgeries. Along the way, we clarify the relationship between admissible transverse surgery and Legendrian surgery. We use this clarification to study a new invariant of transverse knots -- namely, the range of slopes on which admissible transverse surgery preserves tightness -- and to provide some new examples of knot types which are not uniformly thick. 
Our examples also illuminate several interesting new phenomena, including the existence of hyperbolic, universally tight contact 3-manifolds whose Heegaard Floer contact invariants vanish (and which are not weakly fillable); and the existence of open books with arbitrarily high fractional Dehn twist coefficients whose compatible contact structures are not deformations of co-orientable taut foliations.
\end{abstract}

\maketitle

\section{Introduction}
\label{sec:intro}

A longstanding and fundamental open question in 3-dimensional contact geometry asks whether the result of Legendrian surgery on a Legendrian knot in a closed, tight contact 3-manifold is necessarily tight; in other words,

\begin{question}
\label{ques:legtight}
Does Legendrian surgery preserve tightness for closed manifolds?
\end{question}  

It is worth noting that Legendrian surgery does \emph{not} always preserve tightness for non-closed manifolds; indeed, Honda has found a Legendrian knot in a tight genus four handlebody on which Legendrian surgery is overtwisted \cite{honda3}. For closed manifolds, however, it is well-known that Legendrian surgery preserves symplectic and Stein fillability and the property of having a non-zero Heegaard Floer contact invariant. This fact, together with the apparent difficulty of embedding Honda's example into a closed, tight contact manifold (no progress has been made), hints that the answer to Question \ref{ques:legtight} is ``yes."

In this paper, we study the relationship between tightness and an operation defined by Gay \cite{gay} called \emph{admissible transverse surgery}. This operation is in some ways a transverse analogue of Legendrian surgery. In particular, we prove the following relationship between the two surgery operations (the first part of the theorem was also observed in \cite{gay}).

\begin{theorem}\label{thm:same}
Every Legendrian surgery on a Legendrian knot results in a contact manifold which can also be obtained via an admissible transverse surgery on the transversal push off of the Legendrian knot. Conversely, most (but not all) admissible transverse surgeries on a transverse knot result in contact manifolds that can also be obtained via Legendrian surgery on a Legendrian link that lies in a neighborhood of the transverse knot. 
\end{theorem}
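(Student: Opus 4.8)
The plan is to prove the two directions separately; the forward direction is essentially Gay's observation \cite{gay}, and the converse is where the work lies.

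For the forward direction I would start from a Legendrian knot $L$ and its transverse push-off $T$, and use the fact that a standard neighborhood of $L$ can be arranged to be a standard neighborhood of $T$. Fixing coordinates on the (convex) boundary torus in which the meridian is $\infty$ and a chosen (say Seifert) framing is $0$, the dividing curves have slope $\tb(L)$, and Legendrian surgery on $L$ is the operation of removing the neighborhood and regluing so that the new meridian has slope $\tb(L)-1$. It then remains only to check that this regluing is by a tight contact structure on the new solid torus and that the slope $\tb(L)-1$ lies in the range for which the operation counts as admissible — i.e., strictly on the correct side of the meridional/characteristic-foliation slope. The slope bookkeeping here is routine and I do not anticipate any obstacle.

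For the converse, fix a transverse knot $T$, a standard neighborhood $N\cong S^1\times D^2$ with convex boundary, and an admissible transverse surgery producing $(M',\xi')$; recall that this surgery removes $N$ and glues back a solid torus carrying a tight contact structure, recorded by the slope $r$ of the image of the new meridian. The plan is: (1) build Legendrian approximations of $T$ of every sufficiently negative Thurston--Bennequin number — from any Legendrian $L_0$ with push-off $T$, the positive stabilizations $L_k=S_+^k(L_0)$ all have push-off $T$ with $\tb(L_k)=\tb(L_0)-k$, and I would realize the $L_k$ as Legendrian rulings of convex tori in $N$ parallel to $\partial N$, with dividing slopes decreasing toward the meridian as $k$ grows; (2) translate $r$ into a surgery coefficient relative to the contact framing of the $L_k$, and when $r$ is (the translation of) ``contact framing minus one'' for some $L_k$, identify the admissible transverse surgery with Legendrian surgery on that $L_k$, matching the two glued-in solid tori via the classification of tight contact structures on solid tori (Honda, Giroux); (3) for a general admissible $r$, write the required coefficient relative to a fixed $L_k$ as a negative rational, take its continued-fraction expansion with all entries $\le -2$, and realize the surgery as Legendrian surgery on $L_k$ followed by Legendrian surgeries on a chain of Legendrian meridional unknots sitting in Darboux balls inside $N$, stabilized to have the correct $\tb$ — the standard conversion of a rational surgery to a chain of integral Legendrian surgeries, arranged to happen inside $N$.

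The one admissible transverse surgery that escapes this scheme is the extreme one, whose slope $r$ sits at the boundary of the range of dividing slopes realized by convex tori inside $N$: for this $r$ there is no convex torus in $N$ whose dividing slope lies strictly between $r$ and the meridian, so the surgery cannot be factored into basic slices accessible from inside $N$. The main obstacle is to show that this surgery is genuinely \emph{not} a Legendrian surgery on any link in $N$, not merely that the obvious construction fails; here I would argue from the classification of tight contact structures on the solid torus being glued in (equivalently, on $M'$ minus the surgery dual knot), together with the homological constraints that a sequence of contact $(-1)$-framed surgeries imposes, to rule out all Legendrian-surgery descriptions at once. Everything else reduces to convex surface theory and the continued-fraction calculus, so this non-existence statement is the only delicate step.
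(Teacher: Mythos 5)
Your forward direction is essentially the paper's argument (Theorem \ref{thm:LisT}): one works in a model neighborhood, checks that admissible transverse $(-1)$-surgery (relative to the contact framing) on the push off produces a tight solid torus with the same boundary data as the Legendrian surgery torus, and invokes uniqueness of tight contact structures on solid tori (Kanda). Your converse steps (1)--(3) likewise track the paper's Theorem \ref{SimpleSurg} and Lemma \ref{lem:thicken}: pass to a Legendrian approximation $L_k$ inside $N$ and convert the rational coefficient by the Ding--Geiges algorithm. One caveat there: the auxiliary link in that algorithm consists of stabilized Legendrian push offs of $L_k$ (equivalently, suitably linked meridional circles), not ``meridional unknots sitting in Darboux balls'' --- Legendrian surgery on an unknot contained in a Darboux ball changes the ambient manifold by a lens-space connected sum and does not implement the continued-fraction expansion. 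In any case, this scheme only reaches slopes $s<n$, where $n<a<n+1$ and $a$ is the characteristic foliation slope on $\partial N$.

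The genuine gap is in the ``not all'' part, and in your picture of the borderline region $(n,a)$. It is not the case that a single extreme slope escapes: the paper shows that inside $(n,a)$ there are infinitely many slopes for which the admissible transverse surgery \emph{is} a local Legendrian surgery (Theorem \ref{thm:extraslopes}, via contact surgery on a foliation leaf of a torus $T_{1/k}$ and Lemma \ref{surgeryandslopes}), interspersed with two infinite sequences of slopes, accumulating at $n$ and at $n+1$, for which it is \emph{not} (Theorem \ref{excludedsurgery}). More seriously, your proposed non-existence argument --- the classification of tight structures on the glued-in solid torus plus homological constraints from $(-1)$-framed surgeries --- does not rule out an arbitrary Legendrian link in $N$: such a link may be knotted in $N$, may have many components, and nothing in the convex-surface data near $\partial N$ constrains it. The paper needs a global rigidity input: it embeds $S_a$ as a standard neighborhood of the transverse unknot in $(S^3,\xi_{std})$, so the excluded surgeries produce $(S^3,\xi_{std})$ or lens spaces $L(k,1)$; any local Legendrian-surgery description would then yield a Stein filling of these with nontrivial (or too large) second homology, contradicting Gromov's uniqueness of the Stein filling of $S^3$ and McDuff's classification for $L(k,1)$, and the残 remaining case of a single knot in a solid torus admitting a solid-torus surgery is eliminated by the Berge--Gabai classification. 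Without filling rigidity (or some equivalent global tool), the delicate non-existence step you flag does not go through.
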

In Section~\ref{sec:legtrans}, we will prove several results that make the above theorem more precise.
Our main result, however, concerns the following analogue of Question \ref{ques:legtight}, which has been open for some time.

\begin{question}
\label{ques:transtight}
Does admissible transverse surgery preserve tightness for closed manifolds?
\end{question}

Note that an affirmative answer to this question would also provide an affirmative answer to Question \ref{ques:legtight}, by Theorem \ref{thm:same}. In analogy with Honda's example, Colin has found a transverse knot in an \emph{open}, universally tight contact manifold on which admissible transverse surgery is overtwisted \cite{colin}.
However, given the close relationship between admissible transverse surgery and Legendrian surgery and the expectation that the latter preserves tightness for closed manifolds, one might expect that the former does as well. We show that this is not the case; in other words, the answer to Question \ref{ques:transtight} is ``no."

\begin{theorem}\label{thm:main}
There exist infinitely many closed, universally tight contact manifolds $(M,\xi)$ for which an admissible transverse surgery on some transverse knot $K\subset M$ is overtwisted.
\end{theorem}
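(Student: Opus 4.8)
The plan is to hunt for the examples among contact structures that are \emph{transverse to a Seifert fibration}, since for these one controls both sides of the problem at once: a regular fiber of such a contact structure is automatically a transverse knot, admissible transverse surgery on it yields another Seifert fibered manifold carrying a transverse contact structure, and universal tightness of the source is governed by the classification of tight contact structures on Seifert fibered spaces. Moreover, Theorem~\ref{thm:same} tells us exactly where to look: the admissible transverse surgeries that fail to be Legendrian surgeries on a link in a neighborhood --- the only ones with any chance of destroying tightness --- are the ``extremal'' ones, whose surgery slope is pushed as far as admissibility allows, i.e.\ as close as possible to the slope of the dividing set on the boundary of a standard neighborhood. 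So I would restrict attention to these extremal surgeries.

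Concretely, I would proceed as follows. Fix a small Seifert fibered space $M = M(e_0; r_1, r_2, r_3)$ whose base orbifold is hyperbolic and whose Seifert data lie in the range for which $M$ carries a tight contact structure, and take $\xi$ to be one that is transverse to the fibration. Then $(M,\xi)$ is universally tight: this follows from the classification of tight contact structures on Seifert fibered spaces together with the fact that a transverse contact structure on a Seifert fibered space over a hyperbolic base pulls back to a tight contact structure on the universal cover (which is $\mathbb{H}^2\times\mathbb{R}$ or $\widetilde{SL_2\mathbb{R}}$). Let $K\subset M$ be a regular fiber; it is a transverse knot. Perform the extremal admissible transverse surgery on $K$. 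Because a standard neighborhood of $K$ may be chosen to be a union of fibers, the surgered manifold $M'$ is again Seifert fibered and $\xi$ descends to a contact structure $\xi'$ on $M'$ transverse to the new fibration. The decisive point is to \emph{choose $M$ and read off the extremal slope so that the smooth manifold $M'$ carries no tight contact structure at all} --- for instance, so that $M'$ is a small Seifert fibered space in the ``no tight contact structure'' range established by Etnyre--Honda, Lisca--Stipsicz, and Wu. Then $(M',\xi')$ is forced to be overtwisted, simply because $M'$ supports nothing else. Finally, letting $(e_0; r_1, r_2, r_3)$ vary over an infinite family that stays hyperbolic, stays tight, and for which the extremal admissible transverse surgery on a regular fiber lands in the no-tight range produces infinitely many such $(M,\xi)$, pairwise non-diffeomorphic (e.g.\ by first homology).

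The step I expect to be the crux is making three constraints coexist: (i) the source $M$ must stay inside the regime where a (universally) tight contact structure exists; (ii) the slope we are forced to use --- the extremal \emph{admissible} one, which is pinned down by the Seifert data and by Gay's admissibility condition --- must carry $M$ smoothly into the rather restrictive list of Seifert fibered spaces known to carry no tight contact structure; and (iii) this must happen for an entire infinite family. Tracking the framings through the regluing so that the Seifert invariants of $M'$ are computed correctly, and verifying admissibility of the desired slope, are the places where one must genuinely compute rather than wave hands. If requirement (ii) turns out to be too rigid, the fallback is to not demand that $M'$ be no-tight, but instead to identify $\xi'$ precisely --- again using transversality to the new fibration together with the classification of tight contact structures on $M'$ --- and show that this particular $\xi'$ is not on the list; or, most hands-on of all, to exhibit an overtwisted disk directly via a convex-surface/bypass argument inside the reglued solid torus.
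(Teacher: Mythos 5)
Your proposal is a research program rather than a proof: the entire content of the theorem is concentrated in the step you yourself flag as the crux --- exhibiting even one (let alone infinitely many) Seifert fibered $(M,\xi)$ with an admissible slope on a regular fiber that lands on a manifold with no tight contact structure --- and nothing in the proposal establishes that such a configuration exists. Worse, the plan is internally inconsistent as stated. You claim that the surgered structure $\xi'$ is again transverse to the new Seifert fibration on $M'$; but contact structures transverse to the fibers of a Seifert fibration are known to be (universally) tight (Massot's theorem on geodesible contact structures, building on Giroux/Honda for circle bundles). So if $\xi'$ really were transverse to a fibration of $M'$, then $M'$ would carry a tight contact structure and could not lie in the ``no tight structure'' range --- the two pillars of your argument cannot hold at once. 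Dropping the transversality claim does not help: then you have no identification of $\xi'$, and you are back to needing $M'$ itself to admit no tight structure. That list is extremely thin --- by Etnyre--Honda and Lisca--Stipsicz it consists of the small Seifert fibered spaces $S^3_{2n-1}(T_{2,2n+1})$, with base orbifold $S^2(2,2n+1,2n+3)$ --- whereas surgery on a regular fiber of a Seifert space over a hyperbolic base with three cone points (which you need for your universal-tightness argument) generically creates a fourth exceptional fiber, so $M'$ generically misses the list; in the exceptional slopes where the base is unchanged, you must still check that the required change of Euler number goes in the direction allowed by admissibility (slopes below the boundary slope of the standard neighborhood), which is exactly the kind of computation that can, and plausibly does, fail. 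Your reading of Theorem~\ref{thm:same} is also off: the admissible slopes not realized by local Legendrian surgery are not just ``extremal'' ones (see Theorem~\ref{excludedsurgery}), and non-local-Legendrian is anyway only a necessary condition for losing tightness, not a mechanism for it.

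For contrast, the paper's proof avoids Seifert geometry entirely: the examples are supported by genus one open books $(T,\psi_{n,k_1,k_2})$ with two binding components and pseudo-Anosov monodromy. Universal tightness comes from Colin--Honda's fractional Dehn twist coefficient criterion (Theorem~\ref{thm:utightfdtc}, via augmentations in contact homology), overtwistedness of the surgered manifold comes from the fact that capping off one binding component produces a monodromy with FDTC $k_2-n\le 0$, hence not right-veering (Honda--Kazez--Mati\'c), and the bridge between the two is Theorem~\ref{thm:cappingsurgery}, which identifies capping off with admissible transverse $0$-surgery on the binding component (using Lemma~\ref{lem:nbhdB} to see that $0$-surgery is admissible). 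If you want to salvage your approach, you would have to either produce the explicit Seifert family with all three constraints verified, or replace the ``no tight structure on $M'$'' mechanism by a direct identification of $\xi'$ as overtwisted --- and the transversality obstruction above shows the latter cannot proceed the way you outlined.
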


Our examples illuminate several other interesting new phenomena as well, many of which rely on the connections between admissible transverse surgery and Legendrian surgery established in Section \ref{sec:legtrans}. Below, we describe the nature of our examples and outline the proof of Theorem \ref{thm:main}. We then we highlight some of these new phenomena.

\subsection{Capping Off, Our Examples and The Main Theorem}
\label{ssec:cappingintro}
 
We first recall the operation on open books known as \emph{capping off}; this operation plays a key role in many of the results in this paper. Let $(S,\phi)$ be an abstract open book with at least two binding components. By capping off one of the boundary components of $S$ with a disk, we obtain an open book $(\widehat S,\widehat \phi),$ where $\widehat\phi$ is the extension of $\phi$ to $\widehat S$ by the identity on this disk. 

The contact manifolds in our (infinitude of) examples are supported by genus one open books with two boundary components and monodromies that are freely isotopic to pseudo-Anosov maps ({\em cf.}\/ Section \ref{sec:examples}). These open books are constructed so that {\bf (A)} they support universally tight contact structures, and  {\bf (B)} the open books obtained by capping off one of their boundary components support overtwisted contact structures. This leads immediately to the following theorem, which answers a question posed in \cite[Question 1.4]{bald5}.

\begin{theorem}
\label{otcapoff}
There exists an open book $(S,\phi)$ supporting a universally tight contact structure such that the open book $(\widehat S,\widehat \phi)$ obtained by capping off some boundary component of $S$ supports an overtwisted contact structure.
\end{theorem}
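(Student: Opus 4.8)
The plan is to derive the theorem directly from the explicit construction carried out in Section~\ref{sec:examples}. For every sufficiently large integer $n$ we produce a genus one open book $(S,\phi_n)$ with $S=\Sigma_{1,2}$ having two boundary components and $\phi_n$ an explicit word in Dehn twists, freely isotopic to a pseudo-Anosov homeomorphism, and we verify the two properties labelled in the introduction: (A) the contact structure $\xi_n$ compatible with $(S,\phi_n)$ is universally tight; and (B) capping off one of the two boundary components of $S$ yields an open book $(\widehat S,\widehat\phi_n)$ on the once-punctured torus $\widehat S=\Sigma_{1,1}$ whose compatible contact structure is overtwisted. For (B) it suffices to arrange that $\widehat\phi_n$ is \emph{not} right-veering and then invoke the theorem of Honda--Kazez--Mati\'c that any open book with non-right-veering monodromy is overtwisted. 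Property (A) is more delicate and is addressed below; the two together immediately yield the theorem, taking $(S,\phi)=(S,\phi_n)$ for any such $n$.

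Verifying (B) is the more elementary half: one picks a concrete properly embedded arc $\gamma$ in $\widehat S$ and checks, from the explicit word for $\widehat\phi_n$, that $\widehat\phi_n(\gamma)$ lies strictly to the left of $\gamma$ at one of its endpoints --- a finite intersection-and-cyclic-ordering computation with curves on the once-punctured torus. The mechanism I would build into $\phi_n$ is this: include a negative Dehn twist along an essential, non-boundary-parallel curve $c\subset S$ that ``links'' both boundary components, together with enough positive twisting to keep $\phi_n$ itself right-veering precisely because $c$ is not boundary-parallel; capping off the appropriate boundary component makes $c$ boundary-parallel in $\widehat S$, so that the negative twist along $c$ degenerates into a negative contribution to the fractional Dehn twist coefficient at the surviving boundary, while the positive twisting that compensated for it no longer sees that boundary --- and right-veering breaks.

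The heart of the matter, and the main obstacle, is (A): showing that $\xi_n$ is not merely tight but \emph{universally} tight. This cannot be done by any of the usual routes, since these contact manifolds are hyperbolic, have vanishing Heegaard Floer contact invariant, are not weakly (hence not strongly or Stein) fillable, and are not deformations of taut foliations. The plan is instead to show that $\xi_n$ is hypertight: for $n$ large one chooses a contact form adapted to $(S,\phi_n)$, along the lines of Colin--Honda's analysis of Reeb flows compatible with open books, whose Reeb vector field has no contractible closed orbits. In collar neighborhoods of the binding the $n$-fold boundary twisting forces the Reeb flow to rotate rapidly, so that closed orbits there run essentially parallel to the homotopically essential binding components; closed orbits meeting the mapping-torus region project to periodic orbits of the pseudo-Anosov representative, which has no contractible periodic orbits; and the delicate point --- where the lower bound on $n$, equivalently on the fractional Dehn twist coefficients, is genuinely used --- is to rule out a contractible closed Reeb orbit that travels back and forth between these two regions. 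Since a hypertight contact form pulls back to a hypertight contact form on every cover, and a hypertight contact structure is tight by Hofer's theorem that overtwisted contact forms carry contractible closed Reeb orbits, it follows that $\xi_n$ is universally tight, completing the proof. (Should the dynamical argument prove unwieldy, an alternative is to pass to a carefully chosen finite cyclic cover, realize it as an open book manifold with computable monodromy, establish universal tightness there, and invoke that a contact structure is universally tight if and only if its pullback to the universal cover --- which is shared by every finite cover --- is tight.)
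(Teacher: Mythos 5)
Your construction and your argument for overtwistedness after capping off are essentially the paper's: the examples there are $\psi_{n,k_1,k_2}=D_{\delta_1}^{k_1}D_{\delta_2}^{k_2}\psi^n$ on the twice-punctured torus, with $\psi=D_aD_b^{-1}D_cD_d^{-1}$ pseudo-Anosov by Penner's criterion, and the mechanism you describe is exactly what happens: the negatively twisted curve ($d$) becomes boundary-parallel in $\widehat S$ after capping off $B_1$, so the capped-off monodromy is $D_{\delta}^{k_2-n}(D_x^2D_y^{-1})^n$ with fractional Dehn twist coefficient $k_2-n\leq 0$, hence not right-veering and overtwisted by Honda--Kazez--Mati{\'c} \cite{hkm1} (see Lemma~\ref{lem:fdtc=n} and the proof of Theorem~\ref{otcapoff}). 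That half of your proposal is fine, although you should say explicitly how you guarantee that your monodromies (before and after capping) are freely isotopic to pseudo-Anosov maps; in the paper this is Penner's criterion \cite{penner}.

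The genuine gap is in part (A). The paper does not reprove universal tightness from scratch: it invokes Colin--Honda (Theorem~\ref{thm:utightfdtc}, from \cite{ch}), whose hypotheses are verified here because the FDTCs are $k_1,k_2\geq 2$, and whose proof runs through an augmentation of the contact homology DGA (so $CH\neq 0$, hence tight by \cite{yau}) together with residual finiteness to pass from finite covers to the universal cover \cite{hkm5}. Your plan is instead to establish hypertightness by a direct Reeb-dynamical analysis ``along the lines of Colin--Honda,'' but the step you yourself flag as ``the delicate point''---ruling out contractible orbits that travel between the binding collars and the mapping-torus region, which is precisely where the lower bound on the twist coefficients is used---is the entire analytic content of Colin--Honda's theorem, and you give no argument for it. As written, the hardest part of the proof is a sketch of a known, substantial theorem rather than a proof. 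There is also a smaller but real issue in your concluding step: Hofer's theorem produces contractible Reeb orbits from overtwisted disks on \emph{closed} manifolds, so you cannot apply it directly to the (noncompact) universal cover; the standard fix is exactly the residual-finiteness argument of \cite{hkm5} (if the universal cover is overtwisted, some finite cover is, and the hypertight form pulls back there), which you mention only parenthetically, and whose alternative phrasing (``establish universal tightness'' on a finite cover) is circular. If you simply cite Theorem~\ref{thm:utightfdtc} for part (A), your argument closes up and coincides with the paper's; this also yields the additional fact, used elsewhere in the paper, that Legendrian surgeries on these examples remain tight.
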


In Section \ref{sec:cappingoff}, we prove that capping off is equivalent to an admissible transverse surgery.

\begin{theorem}\label{thm:cappingsurgery}
The contact manifold supported by $(\widehat S,\widehat \phi)$ is obtained from that supported by $(S,\phi)$ by admissible 0-surgery on the binding component of $(S,\phi)$ corresponding to the capped off boundary component of $S$.
\end{theorem}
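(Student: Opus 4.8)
The plan is to describe a standard neighborhood of the binding component in the mapping-torus model of the open book, track what capping off does to that neighborhood, and then match the result with Gay's definition of admissible transverse surgery. Let $\partial_0 S$ be the boundary component that is capped off, let $B\subset(M,\xi)$ be the corresponding binding component, and let $M_\phi$ denote the mapping torus of $\phi$. The Thurston--Winkelnkemper construction realizes $M$ as $M_\phi$ with a solid torus glued to each boundary torus of $M_\phi$; write $V\cong S^1\times D^2$ for the one glued to $T:=\partial_0 S\times S^1$, a standard neighborhood of the transverse knot $B$ with $B$ as its core. On $T$ we have the page-boundary curve $\lambda=\partial_0 S\times\{\mathrm{pt}\}$ and the mapping-torus direction $\mu=\{\mathrm{pt}\}\times S^1$; unwinding the construction, the meridian of $V$ is glued to $\mu$ and a longitude to $\lambda$, so that $\lambda$ is precisely the page-framed longitude of $B$. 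The main point is that capping off changes nothing outside $V$: since $\widehat\phi$ agrees with $\phi$ on $S$, the open book of $(\widehat S,\widehat\phi)$ is the same mapping torus $M_\phi$ with the same solid tori attached along all the other boundary tori, together with the solid torus $W=D^2\times S^1$ lying over the capping disk, where the monodromy is the identity, glued to $M_\phi$ along $T$ by $\partial D^2\times\{\mathrm{pt}\}\mapsto\lambda$ and $\{\mathrm{pt}\}\times S^1\mapsto\mu$.

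The topological identification is then immediate: the meridian of $W$ is $\partial D^2\times\{\mathrm{pt}\}$, glued to $\lambda$, so $\widehat M$ is obtained from $M$ by removing the neighborhood $V$ of $B$ and gluing back a solid torus whose meridian is the page-framed longitude $\lambda$ of $B$; with respect to the page framing this is $0$-surgery on $B$. For the contact structures, one chooses the Thurston--Winkelnkemper data over $S$ compatibly, so that $\widehat\xi=\xi$ on $\widehat M\setminus W=M\setminus V$. Inside $W$, the pages of $(\widehat S,\widehat\phi)$ are the meridian disks $D^2\times\{\mathrm{pt}\}$, so $\widehat\xi|_W$ is the contact structure compatible with this trivial disk open book; in particular it is tight and its core $\{0\}\times S^1$ is transverse. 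Since the contact germ along $T$ agrees in $M$ and $\widehat M$, the convex torus $\partial V$ with its $\xi$-dividing set matches $\partial W$ with its $\widehat\xi$-dividing set under the identity gluing. Hence $(\widehat M,\widehat\xi)$ is obtained from $(M,\xi)$ by excising a standard transverse neighborhood of $B$ and gluing in a tight solid torus with transverse core whose meridian is $\lambda$ --- a transverse surgery on $B$ with coefficient $0$ relative to the page framing.

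The point I expect to require genuine care --- and the crux of the argument --- is verifying that this transverse surgery is \emph{admissible} in Gay's sense: that the slope $0$ (page framing) lies in the range on which his construction is defined, and that the tight solid torus the open book puts on $W$ is exactly the one his recipe prescribes (the minimally twisting one, with two dividing curves and transverse core). I would check this by analyzing the convex tori bounding standard neighborhoods of the binding $B$ together with the tight solid tori they cobound with $W$; the conclusion should be that capping off is precisely the admissible transverse surgery at the page-framing slope $0$, the extreme (but still admissible) member of the family of such surgeries on $B$. Once that identification with Gay's construction is pinned down, the theorem follows; the relationship between admissible transverse surgery and Legendrian surgery recorded in Theorem~\ref{thm:same} serves as a useful consistency check throughout.
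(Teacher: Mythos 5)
Your topological setup and overall strategy (compare complements of the binding neighborhood and of the cap solid torus, then match the two filling solid tori) is the same as the paper's, but the proposal defers exactly the two points that constitute the actual proof, and they are not routine. First, admissibility at slope $0$: you need the binding component to have a standard neighborhood $S_s$ whose boundary characteristic foliation has slope $s$ \emph{strictly greater than} $0$ with respect to the page framing, since admissible $r$-surgery requires $r<s$. This is not automatic from the Thurston--Winkelnkemper construction: the naive neighborhood of the binding need not contain a pre-Lagrangian torus whose slope has passed the page slope. The paper proves this as Lemma~\ref{lem:nbhdB}, by choosing the primitive $\lambda$ on the page so that the Liouville-type vector field points \emph{into} the page along the boundary component to be capped (possible only because there are other boundary components where it points out --- this is precisely where the hypothesis of at least two binding components enters) and then gluing on an annulus over which the slope crosses $0$, yielding $N\cong S_s$ with $s\in(0,1)$. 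Your sketch never produces such a neighborhood; saying ``I would check this by analyzing the convex tori bounding standard neighborhoods'' names the gap rather than filling it. (Also, slope $0$ is not an ``extreme member'' of the admissible family; it is admissible because it lies strictly below the boundary slope $s>0$.)

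Second, the identification of the solid torus that the capped-off open book places over the capping disk with the solid torus prescribed by admissible $0$-surgery is likewise asserted rather than proved. In the paper this is done by explicitly extending the one-form over the capping disk (keeping it equal to the old form outside a collar), checking that the resulting torus $U$ over the cap is universally tight with transverse core and with the same linear characteristic foliation on its pre-Lagrangian boundary as the surgery torus, and then invoking uniqueness of such tight solid tori. Note that the relevant boundary tori here are pre-Lagrangian, not convex, so the matching is of characteristic foliations, not dividing sets of convex tori as in your sketch; relatedly, Gay's glued-in torus is a standard neighborhood of the dual transverse knot (universally tight, linearly foliated boundary), not ``the minimally twisting one with two dividing curves.'' Without carrying out these two steps --- the existence of a standard neighborhood of the binding with boundary slope in $(0,1)$ and the concrete contactomorphism between the cap torus and the surgery torus --- the argument does not yet establish Theorem~\ref{thm:cappingsurgery}.
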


Theorem \ref{thm:main} then follows immediately from Theorems \ref{otcapoff} and \ref{thm:cappingsurgery}. 

The proof of {\bf (A)} -- that our examples are universally tight -- relies upon a result of Colin and Honda from \cite{ch}. It is interesting to observe that when one proves, using their result, that a contact manifold is universally tight, one gets for free that Legendrian sugery on any link in this contact manifold is also tight ({\em cf.}\/ Theorem \ref{thm:utightfdtc}). On the other hand, some admissible transverse surgeries in our examples are overtwisted. It follows, in particular, that there exist admissible transverse surgeries which cannot be achieved via Legendrian surgeries. As alluded to above, we will prove a more precise, local version of this result in Section \ref{sec:legtrans}.

\begin{remark}\label{rmk:contacthomology}
Our examples, in combination with Colin and Honda's work, also illustrate an interesting phenomenon having to do with Eliashberg and Hofer's contact homology.  Specifically, the contact manifolds in our examples are strongly symplectically cobordant to overtwisted contact manifolds and yet have non-vanishing contact homologies. To put this in context, recall that an \emph{exact} symplectic cobordism from one contact manifold to another gives rise to a map from the contact homology of the second to that of the first. Since the contact homology of an overtwisted contact manifold vanishes \cite{yau}, our examples show that no such map exists for strong symplectic cobordisms in general. 

This is not a new observation. Gay showed in \cite{gay2} that any contact manifold with Giroux torsion greater than one is strongly symplectically cobordant to an overtwisted manifold. Many such contact manifolds (including the tight 3-tori with Giroux torsion greater than one) have contact forms with no contractible Reeb orbits, and therefore have non-vanishing contact homologies. Since Gay's work, Wendl \cite{wen2} and Latschev and Wendl \cite{latwen} have discovered further examples of contact manifolds with non-vanishing contact homologies that are strongly symplectically cobordant to overtwisted manifolds.

\end{remark}


\subsection{An Invariant of Transverse Knots and Uniform Thickness}

Suppose $K$ is a transverse knot in a tight contact manifold. We define $\slope(K)\subset \mathbb{R}\cup\{\pm\infty\}$ to be the set of slopes for which admissible transverse surgery on $K$ is tight. It is clear that $\slope$ defines an invariant of transverse knots. Using our examples and the results in Section \ref{sec:legtrans}, we prove the following rather odd fact about this invariant.

\begin{theorem}\label{prop:legadmintervals}
There exist transverse knots $K$ for which $\slope(K)$ is non-closed and disconnected. 
\end{theorem}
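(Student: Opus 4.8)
The plan is to use the transverse knot $K$ at the heart of our main examples — the binding component of an open book $(S,\phi)$ of the type described in Section \ref{sec:examples} — and to understand $\slope(K)$ by combining three inputs: the capping-off identification of Theorem \ref{thm:cappingsurgery}, the translation between admissible transverse surgeries and Legendrian surgeries from Section \ref{sec:legtrans}, and the universal tightness of $(S,\phi)$ coming from the Colin--Honda criterion (giving, via Theorem \ref{thm:utightfdtc}, that Legendrian surgery on \emph{any} Legendrian link in a neighborhood of $K$ is tight). First I would pin down which slopes are realized by Legendrian surgeries: by Theorem \ref{thm:same} these form an interval of the form $(-\infty, s_0)$ (slopes more negative than something determined by the self-linking / contact framing of $K$), and on all of these slopes the surgery is tight by universal tightness. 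So $(-\infty, s_0) \subseteq \slope(K)$.

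Next I would examine the slope $0$, which by Theorem \ref{thm:cappingsurgery} corresponds exactly to capping off the relevant boundary component of $S$; by property {\bf (B)} of our examples this surgery is overtwisted, so $0 \notin \slope(K)$. Since $0 \geq s_0$ (the slope $0$ is precisely one of the ``extra'' admissible transverse surgery slopes not coming from Legendrian surgery — this is the local phenomenon promised in Section \ref{sec:legtrans}), we now have a gap in $\slope(K)$ just above $s_0$, at least at the point $0$. To conclude that $\slope(K)$ is both non-closed and disconnected, I would then locate a slope $s_1 > 0$ (or more generally on the far side of the overtwisted region) for which admissible transverse surgery is again tight. The natural candidate is $s_1 = \infty$ or a slope near it: sufficiently large surgeries on the binding of an open book have a controlled effect — e.g. large positive surgery on a binding component can be analyzed directly in terms of the open book — and I expect one can arrange (perhaps after passing to the appropriate member of our infinite family) that some such surgery is tight, for instance by exhibiting it as Legendrian surgery on a different Legendrian representative or by a direct open-book argument. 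Having $(-\infty,s_0)\subseteq \slope(K)$, $0\notin\slope(K)$, and $s_1\in\slope(K)$ with $s_1>0$ gives disconnectedness; and non-closedness follows because the component containing $(-\infty,s_0)$ has a half-open boundary at $s_0$ (the transition from Legendrian-realizable tight surgeries to the overtwisted ones is not attained), or, failing that, because the isolated tight slope(s) beyond the gap form a non-closed piece.

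The main obstacle I anticipate is the \emph{upper} end: showing that $\slope(K)$ contains some slope strictly greater than the overtwisted slope $0$, i.e. that tightness ``comes back'' after the gap. Bounding $\slope(K)$ from below and identifying the overtwisted slope $0$ are essentially immediate from the theorems already assembled, but demonstrating a second tight interval requires genuinely new control over large-slope transverse surgeries on the binding — there is no formal reason tightness must recur, so this must be verified by hand for our explicit monodromies, most likely by recognizing the large surgery as another open book whose contact structure we can certify tight (again via Colin--Honda, or via a fillability argument), or by realizing it through Legendrian surgery on a cable of $K$. I would also need to check carefully that the relevant slope conversions (transverse surgery coefficient versus Legendrian / smooth surgery coefficient, as set up in Section \ref{sec:legtrans}) place $s_0$, $0$, and $s_1$ in the claimed order, so that the gap genuinely separates two nonempty pieces of $\slope(K)$.
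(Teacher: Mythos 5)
There is a genuine gap, and it sits exactly where you say it does: the existence of a tight admissible slope strictly greater than $0$. Your lower-bound step ($(-\infty,0)\subseteq \slope(K)$ via Legendrian realizability plus Theorem \ref{thm:utightfdtc}) and your identification of $0\notin \slope(K)$ (via Theorem \ref{thm:cappingsurgery} and property {\bf (B)}) match the paper's argument, and indeed already give non-closedness, since $0$ is then a limit point of $\slope(K)$ not contained in it. But for disconnectedness you only offer speculative routes -- ``$s_1=\infty$ or a slope near it,'' a direct open-book analysis of large surgeries, a cable, or a fillability argument -- none of which is carried out, and you explicitly concede that ``there is no formal reason tightness must recur.'' Worse, the large-slope suggestion is not even well-posed here: admissible transverse $r$-surgery on the binding component requires $r$ to be less than the characteristic-foliation slope of some standard neighborhood, and by Lemma \ref{lem:nbhdB} the relevant neighborhood is $S_a$ with $0<a<1$; slopes near $\infty$ are therefore not admissible for this knot, so no appeal to ``large positive surgery on a binding component'' can produce the second tight region. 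As written, the proposal proves non-closedness but not disconnectedness, which is half the theorem.

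The paper closes precisely this gap with Theorem \ref{thm:extraslopes}, a statement you had available but did not invoke: taking $n=0$ (since $0<a<1$), there is a decreasing sequence $d_k\in(0,a)$ converging to $0$ such that admissible transverse $d_k$-surgery on the binding component can be achieved by Legendrian surgery on a Legendrian knot inside the standard neighborhood $N$. By Theorem \ref{thm:utightfdtc}, Legendrian surgery on any Legendrian link in $(M_{n,k_1,k_2},\xi_{n,k_1,k_2})$ is tight, so each $d_k$ lies in $\slope(K)$. Combined with $[-\infty,0)\subseteq \slope(K)$ (Theorem \ref{SimpleSurg}) and $0\notin\slope(K)$, this gives tight slopes on both sides of the excluded slope $0$, hence disconnectedness, and the sequence $d_k\searrow 0$ gives non-closedness a second time. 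So the missing ingredient is not ``genuinely new control over large-slope transverse surgeries'' to be verified by hand for the explicit monodromies; it is the already-established local statement that certain admissible slopes just above $n$ are themselves Legendrian surgeries in $N$.
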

 
It would be interesting to compute $\slope(K)$ even for transverse knots in the tight contact structure on $S^3$. In particular, it would be interesting to determine whether $\slope$ can distinguish transverse representatives of a knot type which are not distinguished by their self-linking numbers. An invariant which can do this is said to be \emph{effective}. Historically, effective transverse invariants have been hard to come by. We leave the following question open.

\begin{question}
Is $\slope$ an effective invariant of transverse knots?
\end{question}

Our examples also lead to an interesting observation about {\em uniform thickness}, which has been an important notion in the classification of Legendrian knots. Recall from \cite{EtnyreHonda05} that a knot type $\mathcal{K}$ in a contact manifold $(M,\xi)$ is called uniformly thick if for any solid torus $S$ embedded in $M$ whose core is in the knot type $\mathcal{K}$ there is another solid torus $S'$ embedded in $M$ such that $S\subset S'$ and $S'$ is the standard neighborhood of a Legendrian representative of $\mathcal{K}$ whose Thurston-Bennequin invariant is maximal among all such representatives.  The relationship between Legendrian surgery and admissible transverse surgery is particular nice for uniformly thick knot types. 

\begin{theorem}\label{thm:utgood}
Suppose $\mathcal{K}$ is a uniformly thick knot type in the contact manifold $(M,\xi)$. Then every admissible transverse surgery on a transverse representative $K$ of $\mathcal{K}$ is equivalent to a sequence of Legendrian surgeries on a Legendrian link contained in some neighborhood of $K$. 
\end{theorem}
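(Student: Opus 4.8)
The plan is to push an arbitrary admissible transverse surgery on $K$ into the standard neighborhood of a maximal Thurston--Bennequin representative of $\mathcal{K}$, and then invoke the precise relationship between admissible transverse surgery and Legendrian surgery established in Section~\ref{sec:legtrans}.

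So suppose we are given an admissible transverse surgery on $K$ with slope $s$. By definition it is supported in some standard neighborhood $N$ of the transverse knot $K$ --- a solid torus with convex boundary --- with $s$ lying in the admissible range relative to $N$. The core of $N$ is $K$, which represents $\mathcal{K}$, so uniform thickness of $\mathcal{K}$ furnishes a solid torus $S'$ with $N\subset\operatorname{int}(S')$ that is the standard neighborhood of a Legendrian representative $L$ of $\mathcal{K}$ with $\tb(L)$ maximal. In particular the given surgery is supported inside $S'$, and $S'$ is itself a neighborhood of $K$.

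Next I would re-decompose the surgered manifold. The transverse surgery deletes a solid torus neighborhood of $K$ contained in $N\subset S'$ and reglues the tight solid torus prescribed by the admissible slope $s$; this leaves $M\setminus S'$ untouched and replaces $S'$ by a new solid torus $S''$, assembled from the tight thickened torus $S'\setminus(\text{deleted piece})$ and the reglued tight solid torus. Using admissibility of the surgery one checks (as in Section~\ref{sec:legtrans}) that $S''$ is a tight contact solid torus with convex boundary, and one reads off its dividing slope from $s$. Thus the surgered manifold is obtained from $(M,\xi)$ by replacing the standard Legendrian neighborhood $S'$ of $L$ by $S''$. The essential point --- this is where the maximality of $\tb(L)$ matters --- is that the dividing slope of $\partial S''$ lies strictly on the ``negative'' side of that of $\partial S'$: because $N\subset S'$ forces the dividing slope of $\partial N$ to be no larger than that of $\partial S'$, because $s$ is admissible relative to $N$, and because $\tb(L)$ is as large as possible, the slope $s$ lands in precisely the range for which admissible transverse surgery is realized by Legendrian surgery in Section~\ref{sec:legtrans}. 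Were $L$ not of maximal Thurston--Bennequin invariant, the dividing slope of $\partial S'$ could be too small and $s$ could fall into the exceptional case of Theorem~\ref{thm:same} --- for instance the capping-off surgery of Theorem~\ref{thm:cappingsurgery}.

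Finally, passing from the standard neighborhood $S'$ of $L$ to a tight solid torus $S''$ whose dividing slope lies below it is, by Honda's classification of tight contact structures on solid tori, effected by peeling off a finite sequence of basic slices; the cores of these basic slices form a Legendrian link --- stabilized Legendrian push-offs of $L$ --- contained in $S'\setminus L$, hence in the neighborhood $S'$ of $K$, and attaching the basic slices is precisely a sequence of Legendrian surgeries on this link. This identifies the original admissible transverse surgery on $K$ with a sequence of Legendrian surgeries on a Legendrian link in a neighborhood of $K$. I expect the main obstacle to be the middle step: keeping careful track of the relevant dividing slopes so as to confirm that admissibility of $s$, the inclusion $N\subset S'$, and the maximality of $\tb(L)$ together force $S''$ into the Legendrian-realizable range rather than onto the exceptional boundary slope.
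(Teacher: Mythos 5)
Your proposal follows essentially the same route as the paper: use uniform thickness to thicken a standard neighborhood of $K$ to a standard neighborhood of a maximal-$\tb$ Legendrian representative, observe that the admissible surgery slope then lies below an integer dividing slope, and realize the surgery by Legendrian surgeries on stabilized push-offs inside that neighborhood via the classification of tight solid tori (this last step is exactly the paper's Lemma~\ref{lem:thicken}, which rests on Ding--Geiges and the analysis in the proof of Theorem~\ref{SimpleSurg}; the paper also inserts an intermediate Legendrian $L'$ with $\tb(L')$ equal to the least integer above the transverse neighborhood's slope, but that is inessential).

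Two points in your middle and final steps need repair, though neither changes the strategy. First, the slope bookkeeping: $\partial S''=\partial S'$ and its dividing set is unchanged (slope $\tb(L)$); what carries the surgery coefficient is the \emph{meridian} of $S''$, which has slope $s$ on this boundary. The inequality you actually need is $s<a\leq \tb(L)$, where $a$ is the slope of the pre-Lagrangian boundary of the transverse neighborhood $N$; the second inequality holds for \emph{any} standard Legendrian neighborhood containing $N$ (a minimally twisting solid torus with convex boundary of slope $m$ contains no linear torus of slope exceeding $m$), so maximality of $\tb(L)$ is not what forces $s$ into the realizable range --- uniform thickness (hence maximality) is needed only to guarantee that such a thickening of $N$ exists at all. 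Second, $S''$ is not obtained from $S'$ by ``peeling off basic slices'': its meridian differs from that of $S'$, so the correct mechanism is that contact $(-1)$-surgeries on suitably stabilized Legendrian push-offs of $L$ (or $L'$) change the meridian to the slope-$s$ curve; moreover you must check that the resulting tight solid torus is the \emph{same} tight structure as the universally tight one produced by the admissible transverse surgery, not merely one with the right meridian and boundary dividing set. In the paper this is the statement that, by choosing the signs of the stabilizations appropriately, every tight contact structure on the reglued solid torus with the given boundary data is realized; adding that sentence (together with the tightness of $S''$, which follows as in Lemma~\ref{lem:admutight}) closes your argument.
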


\begin{corollary}
\label{cor:uthickt}
Suppose $(M,\xi)$ is a tight contact manifold on which Legendrian surgery preserves tightness, and that $\mathcal{K}$ is a uniformly thick knot type in $(M,\xi)$. Then, for any transverse representative $K$ of $\mathcal{K}$, we have
\[
t(K)=[-\infty,\overline{\tb}(\mathcal{K})), 
\]
where $\overline{\tb}(\mathcal{K})$ is the maximal Thurston-Bennequin invariant among Legendrian representatives of $\mathcal{K}$. 
\end{corollary}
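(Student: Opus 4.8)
The plan is to establish the two inclusions $[-\infty,\overline{\tb}(\mathcal{K}))\subseteq t(K)$ and $t(K)\subseteq[-\infty,\overline{\tb}(\mathcal{K}))$ separately; the point worth emphasizing at the outset is that the tightness part of the statement is almost entirely absorbed into Theorem \ref{thm:utgood}. Write $A(K)\subseteq\R\cup\{\pm\infty\}$ for the set of slopes realized by \emph{some} admissible transverse surgery on $K$. By Theorem \ref{thm:utgood} each such surgery is a sequence of Legendrian surgeries on a Legendrian link contained in a neighborhood of $K$; since $(M,\xi)$ is tight and Legendrian surgery preserves tightness there, every admissible transverse surgery on $K$ is tight, so $t(K)=A(K)$. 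Thus the corollary is equivalent to the purely contact-geometric identity $A(K)=[-\infty,\overline{\tb}(\mathcal{K}))$.

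For $A(K)\subseteq[-\infty,\overline{\tb}(\mathcal{K}))$, recall that an admissible transverse surgery on $K$ is carried out inside a tight solid-torus neighborhood $N$ of $K$ with convex boundary, and that admissibility requires the surgery slope to be strictly less than the slope of the dividing curves of $\partial N$ (the precise formulation is among the results of Section \ref{sec:legtrans}). The core of $N$ represents $\mathcal{K}$, so the slope of $\partial N$ is at most the Etnyre--Honda contact width $w(\mathcal{K})$ \cite{EtnyreHonda05}. For a uniformly thick knot type $w(\mathcal{K})=\overline{\tb}(\mathcal{K})$: by the definition of uniform thickness any solid torus with core $\mathcal{K}$ embeds in a standard neighborhood $N_L$ of some Legendrian representative $L$ with $\tb(L)=\overline{\tb}(\mathcal{K})$, and every convex torus in $N_L$ parallel to $\partial N_L$ has slope at most $\overline{\tb}(\mathcal{K})$. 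Hence every admissible transverse surgery slope on $K$ is strictly below $\overline{\tb}(\mathcal{K})$, which gives the inclusion, the endpoint $\overline{\tb}(\mathcal{K})$ being excluded exactly because of the strict inequality in the admissibility condition.

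For the reverse inclusion, fix $r<\overline{\tb}(\mathcal{K})$. Beginning from any standard neighborhood of $K$, uniform thickness embeds it in a standard neighborhood $N_L$ of a maximal-$\tb$ representative $L$ of $\mathcal{K}$; inside $N_L$ one uses Giroux's convex surface theory to locate a parallel convex torus of slope in $(r,\overline{\tb}(\mathcal{K})]$, bounding a tight solid-torus neighborhood $N$ of a transverse knot isotopic to $K$ whose boundary slope exceeds $r$. Performing the admissible transverse surgery of slope $r$ inside $N$ then shows $r\in A(K)$; the slope $-\infty$ corresponds to the trivial surgery and so is included as well. (When $r$ is rational this is an honest Dehn surgery; irrational slopes, if one admits them into $t(K)$, are handled by the same construction.) Together with the previous paragraph this proves $A(K)=[-\infty,\overline{\tb}(\mathcal{K}))$, and hence the corollary.

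The step I expect to be the main obstacle is the identity $w(\mathcal{K})=\overline{\tb}(\mathcal{K})$ together with the convex-surface bookkeeping that surrounds it — in particular, keeping in sync the three normalizations of the surgery coefficient that occur here: the transverse surgery slope used to define $t(K)$, the Legendrian surgery coefficients supplied by Theorem \ref{thm:utgood}, and the slopes of the dividing curves on the tori $\partial N$. Once these are aligned, each inclusion is short, and tightness of the surgered manifolds requires no input beyond Theorem \ref{thm:utgood} and the standing hypothesis that Legendrian surgery preserves tightness in $(M,\xi)$.
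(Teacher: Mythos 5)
Your argument is correct and takes essentially the same route as the paper: Theorem \ref{thm:utgood} plus the hypothesis that Legendrian surgery preserves tightness reduces the corollary to identifying the set of admissible slopes, and uniform thickness (thickening any standard neighborhood of $K$ into a standard neighborhood of a maximal-$\tb$ Legendrian representative) identifies that set as $[-\infty,\overline{\tb}(\mathcal{K}))$. The paper's proof is simply terser: it states the lower inclusion (standard neighborhoods of $K$ of every slope $a<\overline{\tb}(\mathcal{K})$ exist) and leaves the upper bound---your $w(\mathcal{K})=\overline{\tb}(\mathcal{K})$ step---implicit, at the same level of rigor as your appeal to the classification of slopes of convex tori in a standard Legendrian neighborhood.
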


This corollary shows that $t$ can only hope to be an interesting (or, at least, new) invariant for transverse knots in \emph{non}-uniformly thick knot types or knot types in contact manifolds on which Legendrian surgery does not preserve tightness.

Examples of uniformly thick knot types abound; see \cite{EtnyreHonda05, LaFountain1, LaFountain2}. In contrast, it is generally difficult to find non-uniformly thick knot types, though the unknot and the positive torus knots are such knot types \cite{EtnyreHonda05}. Our examples provide a new infinite family of non-uniformly thick knots. Suppose $(S,\phi)$ is a genus one, two boundary component open book supporting one of our examples, of the sort whose construction was described in the previous subsection. According to Theorem \ref{thm:cappingsurgery} and the fact that $(S,\phi)$ satisfies the property {\bf (B)}, each of its binding components is a transverse knot on which admissible 0-surgery is overtwisted. Moreover, as discussed at the end of Subsection \ref{ssec:cappingintro}, Legendrian surgery preserves tightness for the contact manifold supported by $(S,\phi)$. Theorem \ref{thm:utgood} therefore implies the following.

\begin{corollary}\label{cor:ut}
Each binding components of $(S,\phi)$ belongs to a non-uniformly thick knot type. 
\end{corollary}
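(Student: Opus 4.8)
The plan is to argue by contradiction, assembling the three ingredients collected just before the statement. Namely: by Theorem~\ref{thm:cappingsurgery} together with property~\textbf{(B)}, admissible $0$-surgery on each binding component of $(S,\phi)$ yields an overtwisted contact manifold; by property~\textbf{(A)} (established via the Colin--Honda criterion, {\em cf.}\/ Theorem~\ref{thm:utightfdtc}), Legendrian surgery on any Legendrian link in the contact manifold $(M,\xi)$ supported by $(S,\phi)$ yields a tight contact manifold; and Theorem~\ref{thm:utgood} says that, were a binding component uniformly thick, every admissible transverse surgery on it would be realized by a sequence of Legendrian surgeries in a neighborhood of it. These three facts are visibly incompatible, and the corollary is the resulting contradiction.

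In detail: let $B$ be a binding component of $(S,\phi)$, regarded as a transverse knot in $(M,\xi)$ (the binding of an open book is naturally transverse to the supported contact structure), and suppose toward a contradiction that the knot type $\mathcal{K}$ of $B$ is uniformly thick in $(M,\xi)$. Apply Theorem~\ref{thm:utgood} to the admissible $0$-surgery on $B$ guaranteed by Theorem~\ref{thm:cappingsurgery}: this surgery is then equivalent to a sequence of Legendrian surgeries on some Legendrian link $\Lambda$ contained in a neighborhood of $B$, hence contained in $(M,\xi)$. A sequence of Legendrian surgeries on the components of $\Lambda$ is Legendrian surgery on the link $\Lambda \subset (M,\xi)$, so by Theorem~\ref{thm:utightfdtc} the resulting contact manifold is tight. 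But by Theorem~\ref{thm:cappingsurgery} this same contact manifold is the one supported by the capped-off open book $(\widehat S,\widehat\phi)$, which is overtwisted by property~\textbf{(B)} — a contradiction. Hence no binding component of $(S,\phi)$ is uniformly thick, which is the claim.

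I do not expect a genuine obstacle here, since all of the real work is upstream: in the construction of the examples in Section~\ref{sec:examples} (properties~\textbf{(A)} and~\textbf{(B)}), in Theorem~\ref{thm:cappingsurgery}, and in Theorem~\ref{thm:utgood}. The only points that require a moment's care are (i) confirming that the ``admissible $0$-surgery'' produced by Theorem~\ref{thm:cappingsurgery} is indeed an admissible transverse surgery, so that Theorem~\ref{thm:utgood} applies to it, and (ii) observing that ``a sequence of Legendrian surgeries on a Legendrian link'' is itself a Legendrian surgery on a link in $(M,\xi)$, so that it falls under the tightness-preservation statement of Theorem~\ref{thm:utightfdtc}. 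Both are immediate from the definitions, so the corollary follows formally from the cited results.
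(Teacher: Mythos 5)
Your proposal is correct and is essentially the paper's own argument: the paper derives the corollary from exactly the same three ingredients (Theorem \ref{thm:cappingsurgery} plus property \textbf{(B)} giving an overtwisted admissible $0$-surgery, Theorem \ref{thm:utightfdtc} giving tightness of Legendrian surgeries on the examples, and Theorem \ref{thm:utgood}), stated in the paragraph preceding the corollary. Your contradiction write-up, including the two points of care you flag (admissibility of the $0$-surgery via Lemma \ref{lem:nbhdB}, and the link lying in the original manifold so Theorem \ref{thm:utightfdtc} applies), matches the intended proof.
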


\subsection{Contact Invariants, Fillability and Deformations of Taut Foliations}
Our examples also offer new information about the contact invariant in Heegaard Floer homology.
Recall that this invariant assigns to $(M,\xi)$ a class $c(\xi)\in\hf(-M)$ which vanishes when $\xi$ is overtwisted and is non-zero when $\xi$ is strongly fillable \cite{osz1,ghiggini}. There \emph{are} tight contact structures with vanishing invariant; for instance, $c(\xi)=0$ whenever $(M,\xi)$ contains Giroux torsion \cite{ghihonvh}. Furthermore, Massot and Wendl  have separately discovered infinite families of tight, torsion-free contact manifolds with trivial invariants \cite{massot,wen3}. 
({That Wendl's examples have trivial Heegaard Floer contact invariants follows from the HF=ECH correspondence of Kutluhan-Lee-Taubes \cite{klt1,klt2,klt3} and Colin-Ghiggini-Honda \cite{cgh1,cgh2}.})
Their examples are Seifert fibered spaces over surfaces with genus at least 3 and therefore contain incompressible tori. In contrast, we prove the following result. 

\begin{theorem}\label{thm:atoroidalcneq0}
There exist infinitely many atoroidal, tight contact manifolds with trivial contact invariants.
\end{theorem}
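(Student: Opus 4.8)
The plan is to derive Theorem~\ref{thm:atoroidalcneq0} from the examples already constructed, namely the universally tight contact manifolds $(M,\xi)$ supported by the genus one, two boundary component open books $(S,\phi)$ whose monodromies are freely isotopic to pseudo-Anosov maps. There are two things to verify: that infinitely many of these $(M,\xi)$ are atoroidal, and that each has vanishing Heegaard Floer contact invariant $c(\xi)$.

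First I would address the contact invariant. By Theorem~\ref{thm:cappingsurgery}, capping off a boundary component of $(S,\phi)$ is an admissible $0$-surgery on the corresponding binding component; by property~\textbf{(B)}, the resulting contact manifold $(\widehat M,\widehat\xi)$ supported by $(\widehat S,\widehat\phi)$ is overtwisted, so $c(\widehat\xi)=0$. An admissible transverse surgery, being built from Legendrian surgeries on a link in a neighborhood of the transverse knot (at least in the cases relevant here; {\em cf.}\/ Section~\ref{sec:legtrans} and Theorem~\ref{thm:same}), gives a Stein cobordism from $(M,\xi)$ to $(\widehat M,\widehat\xi)$. The functoriality of the contact invariant under Stein (or, more generally, exact symplectic) cobordisms then yields a map $\hf(-\widehat M)\to\hf(-M)$ carrying $c(\widehat\xi)$ to $c(\xi)$; since $c(\widehat\xi)=0$, we conclude $c(\xi)=0$. (Alternatively, one can cite the naturality of $c$ under Legendrian surgery directly, since tightness is not needed for vanishing to propagate in this direction.)

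Next I would establish atoroidality. The monodromy $\phi$ is freely isotopic to a pseudo-Anosov homeomorphism of $S=S_{1,2}$, so the mapping torus of $S$ with monodromy $\phi$ is a hyperbolic three-manifold with two cusps. The contact manifold $M$ is obtained from this mapping torus by Dehn filling the two cusps along the slopes dictated by the open book (the curves $\partial S$ cross sections of the binding). By Thurston's hyperbolic Dehn surgery theorem, all but finitely many fillings of each cusp yield hyperbolic, hence atoroidal, closed manifolds; since our construction in Section~\ref{sec:examples} produces an infinite family of monodromies (e.g. by composing with increasing powers of boundary-parallel Dehn twists, which changes the framings/page structure but keeps the free isotopy class pseudo-Anosov), infinitely many of the resulting $M$ are hyperbolic. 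Combined with the previous paragraph, this gives infinitely many atoroidal, tight contact manifolds with $c(\xi)=0$.

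The main obstacle is the second step: one must be careful that the infinite family of open books genuinely produces infinitely many \emph{distinct} closed manifolds that are \emph{simultaneously} atoroidal and still support our universally tight contact structures. Two issues intertwine here. First, the hyperbolic Dehn surgery theorem only excludes finitely many filling slopes \emph{per cusp}, so one needs the construction in Section~\ref{sec:examples} to vary the manifold (not merely the contact structure) through infinitely many hyperbolic fillings --- this should follow by tracking how the stabilizations or twists used to build the family change the page and hence the filling slopes, and by a volume or injectivity-radius argument to see infinitely many are pairwise non-homeomorphic. Second, one must confirm that property~\textbf{(A)} (universal tightness, via Colin--Honda) and the vanishing of $c(\xi)$ persist along the whole family, which they do because both are consequences of the structural features --- pseudo-Anosov free isotopy class with large fractional Dehn twist coefficient, and the capping-off being overtwisted --- that the construction is designed to preserve. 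I would organize the proof so that these persistence statements are quoted from Section~\ref{sec:examples} rather than re-derived.
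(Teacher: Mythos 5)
Your atoroidality argument is essentially the paper's: the mapping torus of the pseudo-Anosov representative is hyperbolic, and $M_{n,k_1,k_2}$ is obtained by filling its two cusps, so Thurston's hyperbolic Dehn surgery theorem (together, in the paper, with Agol's bound of at most $12$ exceptional fillings per cusp, which is what lets one keep $2\le k_2\le n$ for $n\ge 14$) produces infinitely many hyperbolic, hence atoroidal, examples; this is Lemma \ref{lem:hyperbolic} and Proposition \ref{prop:nvanishingc}.

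The vanishing step, however, has a genuine gap. You deduce $c(\xi)=0$ by asserting that the admissible transverse $0$-surgery realizing the cap-off is ``built from Legendrian surgeries on a link in a neighborhood of the transverse knot,'' giving a Stein cobordism to the overtwisted manifold. For these examples that premise is false, and in fact it is precisely what the paper is constructed to refute: the binding has a standard neighborhood $S_a$ with $0<a<1$, so the surgery slope $0$ is the integer slope $n$ excluded by Theorems \ref{SimpleSurg} and \ref{thm:extraslopes}, and more decisively, if the $0$-surgery could be realized by \emph{any} sequence of Legendrian surgeries (local or not), then Theorem \ref{thm:utightfdtc} (Colin--Honda) would force the capped-off contact structure to be tight, contradicting property {\bf (B)}. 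So the Stein cobordism you invoke does not exist, and neither does a direct ``naturality under Legendrian surgery'' argument. (Gay does endow the $2$-handle cobordism with a symplectic structure, but it is only a strong, non-exact cobordism with the overtwisted end convex, which gives no map on contact invariants --- this is exactly the tension highlighted in Remark \ref{rmk:contacthomology}.) The correct tool is Theorem \ref{thm:nat} (Baldwin, \cite{bald5}): the $0$-framed $2$-handle cobordism $W$ associated to capping off admits a $\Sc$ structure for which $F_{W,\spc}:\hf(-M_{\widehat S,\widehat\phi})\to\hf(-M_{S,\phi})$ carries $c(\widehat S,\widehat\phi)$ to $c(S,\phi)$; since $\xi_{\widehat S,\widehat\phi}$ is overtwisted (FDTC $k_2-n\le 0$), this yields $c(\xi_{n,k_1,k_2})=0$ with no fillability or Stein hypothesis. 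Your proof should be rewritten to quote that result rather than a Legendrian-surgery realization of the surgery.
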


Our examples are, in particular, hyperbolic and universally tight. They are not strongly fillable since their contact invariants vanish. Moreover, infinitely many of them are rational homology 3-spheres and are thus non-weakly fillable as well \cite{oono}. This subset of our examples gives rise to the following corollary.

\begin{corollary}\label{cor:atoroidaltight}
There exist infinitely many hyperbolic, universally tight contact manifolds that are not weakly fillable.
\end{corollary}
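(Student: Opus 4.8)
The plan is to deduce Corollary~\ref{cor:atoroidaltight} from Theorem~\ref{thm:atoroidalcneq0} together with the structural facts already assembled in the excerpt. Recall that Theorem~\ref{thm:atoroidalcneq0} produces infinitely many atoroidal, tight contact manifolds with vanishing Heegaard Floer contact invariant, and that (by the discussion following it) these manifolds are in fact hyperbolic and universally tight. The remaining content of the corollary is the passage from ``not strongly fillable'' to ``not weakly fillable''; this is where the rational homology sphere hypothesis enters.

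First I would argue that the examples are not strongly fillable: if $(M,\xi)$ were strongly symplectically fillable, then $c(\xi)\neq 0$ in $\hf(-M)$ by the results of Ozsv\'ath--Szab\'o and Ghiggini \cite{osz1,ghiggini}, contradicting Theorem~\ref{thm:atoroidalcneq0}. Next I would extract the subfamily consisting of rational homology 3-spheres. The contact manifolds come from genus one open books with two binding components and pseudo-Anosov-type monodromy; computing $H_1$ from the monodromy (e.g.\ via the presentation coming from the Lefschetz fibration / mapping class) shows that infinitely many choices in the construction yield $b_1(M)=0$. (This $H_1$ computation is carried out where the examples are defined, so I may quote it.) For these, I invoke the theorem of Ozsv\'ath--Szab\'o \cite{oono} (the ``$b_1=0$'' case), which states that a weakly fillable rational homology sphere is in fact strongly fillable. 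Hence, for the rational homology sphere members of the family, weak fillability would imply strong fillability, which we have just ruled out. Combining with hyperbolicity and universal tightness (already established) gives infinitely many hyperbolic, universally tight, non-weakly-fillable contact manifolds, proving the corollary.

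The main obstacle is purely bookkeeping rather than conceptual: one must verify that the construction genuinely contains \emph{infinitely many} distinct manifolds that are simultaneously (i) hyperbolic, (ii) universally tight, (iii) have vanishing contact invariant, and (iv) are rational homology spheres, and that these are pairwise non-diffeomorphic. Infinitude and pairwise distinctness can be handled by the usual device of varying a twisting parameter in the monodromy and appealing to, say, the growth of hyperbolic volume or of $|H_1|$ along the family; I would note that the rational homology sphere condition cuts out a sublattice of parameters but still leaves an infinite set. Properties (i)--(iii) are inherited directly from Theorem~\ref{thm:atoroidalcneq0} and the remarks immediately following it, so no new contact-geometric input is needed beyond \cite{oono}.

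I should remark that weak versus strong fillability is only a meaningful distinction because of the $b_1=0$ hypothesis: for manifolds with $b_1>0$ there exist weakly but not strongly fillable examples, so restricting to rational homology spheres is essential for the argument, and this is the only place in the proof where that restriction is used.
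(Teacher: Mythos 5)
Your argument is correct and follows essentially the same route as the paper: take the hyperbolic, universally tight examples with $c(\xi)=0$ (Proposition~\ref{prop:nvanishingc}), rule out strong fillability via the non-vanishing of the contact invariant for strong fillings, and then use Lemma~\ref{lem:qhs3} to pass to the rational homology sphere subfamily, where weak fillability would imply strong fillability by \cite{oono}. The only slip is attributional: the weak-implies-strong result for rational homology spheres cited as \cite{oono} is due to Ohta--Ono, not Ozsv\'ath--Szab\'o.
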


There are many examples in the literature of tight, non-weakly fillable contact manifolds ({\em cf.}\/ \cite{EH3,ghigginihonda,wenneid,plavhm,wen3}), but all either contain Giroux torsion or are Seifert fibered spaces. It appears that ours are the first such examples which are hyperbolic.

Recall that a non-weakly fillable contact structure cannot be the deformation of a co-orientable taut foliation \cite{et}. Therefore, Corollary \ref{cor:atoroidaltight} reproduces and improves upon a recent result of Lekili and Ozbagci which states that there exist atoroidal, universally tight contact structures that are not deformations of co-orientable taut foliations \cite{lekiliozbagci}. Their examples are Stein fillable and, hence, quite different from ours.

We end with a discussion on the relationship between open books and taut foliations. Suppose $\phi$ is a boundary-fixing diffeomorphism of $S$ that is freely isotopic to a pseudo-Anosov map, and let $B_1,\dots,B_k$ denote the boundary components of $S$. In \cite{hkm1}, Honda, Kazez and Mati{\'c} define the \emph{fractional Dehn twist coefficient} (FDTC) of $\phi$ around $B_i$ to be, roughly, the amount of twisting around $B_i$ in the free isotopy above. They prove in \cite{hkm2} that an open book with connected binding whose monodromy has FDTC at least one supports a contact structure which is the deformation of a co-orientable taut foliation. This prompted the following natural question.

\begin{question}[\rm{\cite[Question 7.3]{bald5}}]\label{ques:fdtcfoliation}
Does the analogous result hold for open books with disconnected binding?
\end{question}

Our examples show that the answer to this question is ``no."

\begin{theorem}\label{thm:fdtc}
There exist open books whose FDTCs are arbitrarily large, but whose compatible contact structures are not deformations of co-orientable taut foliations.
\end{theorem}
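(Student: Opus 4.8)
The plan is to read Theorem~\ref{thm:fdtc} off the family of Section~\ref{sec:examples}, using only one external input, the Eliashberg--Thurston theorem \cite{et}: a contact structure which is the deformation of a co-orientable taut foliation is weakly fillable. Recall that our examples are supported by genus one open books $(S,\phi)$ with two binding components and monodromy freely isotopic to a pseudo-Anosov map, satisfying \textbf{(A)} and \textbf{(B)}. By Theorem~\ref{thm:atoroidalcneq0} and the discussion surrounding Corollary~\ref{cor:atoroidaltight}, an infinite subfamily of them consists of rational homology spheres whose supported contact structures have vanishing Heegaard Floer contact invariant; such a contact structure is not strongly fillable \cite{osz1,ghiggini}, and, living on a rational homology sphere, not weakly fillable \cite{oono}. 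Hence, by \cite{et}, the contact structures in this subfamily are not deformations of co-orientable taut foliations, and they are supported by open books with \emph{disconnected} binding. This already answers Question~\ref{ques:fdtcfoliation} negatively; what remains is to see that the fractional Dehn twist coefficients of the monodromies on this subfamily can be made arbitrarily large.

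The key point is that one can inflate these coefficients by twisting about the binding component that is capped off in \textbf{(B)}, without disturbing any of the above. Given an example $(S,\phi)$ with $\delta$ the capped-off boundary component, replace $\phi$ by $\phi_n = \phi\circ T_\delta^{n}$, where $T_\delta$ is the right-handed Dehn twist about a page curve parallel to $\delta$ and $n \ge 0$. Then: (1) $T_\delta$ is a boundary twist, so $\phi_n$ is freely isotopic to the same pseudo-Anosov map as $\phi$, and the fractional Dehn twist coefficient of $\phi_n$ about $\delta$ is that of $\phi$ plus $n$, hence tends to $+\infty$ as $n\to\infty$ (the coefficient about the other binding component is unchanged, and is positive by the construction of Section~\ref{sec:examples}). (2) The curve defining $T_\delta$ bounds the capping disk in $\widehat S$, so $T_\delta$ extends over that disk by a map isotopic to the identity; thus $(\widehat S,\widehat{\phi_n}) = (\widehat S,\widehat\phi)$ for every $n$, and in particular the cap-off remains overtwisted, so \textbf{(B)} is preserved. (3) Because the cap-off stays overtwisted, the capping-off cobordism of Section~\ref{sec:cappingoff} (cf.\ Theorem~\ref{thm:cappingsurgery}) shows, just as in the proof of Theorem~\ref{thm:atoroidalcneq0}, that the contact structure supported by $(S,\phi_n)$ has vanishing contact invariant; and one verifies from the explicit surgery description of the examples that $(S,\phi_n)$ --- which is obtained from $(S,\phi)$ by $-1/n$-surgery (with respect to the page framing) on the binding-parallel, rationally null-homologous curve $\delta$ --- still presents a rational homology sphere. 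Therefore $(S,\phi_n)$ supports a contact structure that is not strongly, and hence (being a rational homology sphere) not weakly, fillable, so by \cite{et} not a deformation of a co-orientable taut foliation; letting $n\to\infty$ finishes the proof.

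The step I expect to be the main obstacle is the homological bookkeeping in (3): one must confirm that the twisted open books $(S,\phi_n)$ remain rational homology spheres --- e.g.\ by controlling how $|H_1|$ changes under $1/n$-surgery on the binding-parallel curve, using the concrete surgery presentations of Section~\ref{sec:examples} --- since it is only via the rational-homology-sphere hypothesis that vanishing of the contact invariant upgrades to non-weak-fillability. Universal tightness of $(S,\phi_n)$, while not logically required for Theorem~\ref{thm:fdtc}, also survives the modification: Colin and Honda's criterion \cite{ch} for universal tightness depends only on the pseudo-Anosov free isotopy class and on lower bounds for the fractional Dehn twist coefficients, and both are preserved --- indeed the latter is only improved --- by passing from $\phi$ to $\phi_n$.
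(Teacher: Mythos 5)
Your argument has a genuine gap at exactly the point you identify as "what remains": making the FDTCs arbitrarily large. Your boundary-twisting trick $\phi_n=\phi\circ T_\delta^n$ only inflates the coefficient about the \emph{capped-off} binding component; as you yourself note, the coefficient about the other component is unchanged, so in your family it stays pinned at its original value $k_2\geq 2$. Thus the open books you produce do not have all of their FDTCs arbitrarily large --- the minimum over the two binding components is bounded --- which is what Theorem \ref{thm:fdtc} asserts (and what makes it a strong negative answer to Question \ref{ques:fdtcfoliation}: no constant lower bound on the FDTCs can force a taut-foliation deformation). In the paper's notation your construction is just $M_{n,k_1,k_2}$ with $n,k_2$ fixed and $k_1\to\infty$, whereas the paper's proof takes $\psi_{n,k_1,n}$ with \emph{both} $n,k_1>N$, so that by Lemma \ref{lem:fdtc=n} both FDTCs exceed $N$, and uses Lemma \ref{lem:qhs3} to arrange that $M_{n,k_1,n}$ is a rational homology sphere.

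Moreover, the gap is not repairable by simply twisting about the second boundary as well: the only mechanism available for killing the contact invariant is that the capped-off monodromy $D_\delta^{k_2-n}(D_x^2D_y^{-1})^n$ is overtwisted, which requires $k_2-n\leq 0$. So to push the FDTC about $B_2$ above any given $N$ you must simultaneously increase the pseudo-Anosov power $n$, i.e.\ change the underlying example rather than fix it, which is precisely what the paper does by setting $k_2=n$ and letting $n$ (odd, so Lemma \ref{lem:qhs3} applies) and $k_1$ grow. Everything else in your write-up is sound and matches the paper: the cap-off is unchanged by twists about $\delta$, vanishing of $c$ follows from Theorem \ref{thm:nat}, the rational homology sphere condition (which depends only on $n$ and $k_1+k_2$) rules out weak fillability via \cite{oono}, and non-fillability obstructs being a deformation of a co-orientable taut foliation.
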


\subsection*{Organization} In Section \ref{sec:prelim}, we provide background on Legendrian surgery and admissible transverse surgery. In Section \ref{sec:legtrans}, we establish several connections between these two surgery operations, resulting in a precise formulation of Theorem~\ref{thm:same}. In this section, we also prove Theorem~\ref{thm:utgood} and Corollary \ref{cor:uthickt}. In Section~\ref{sec:examples}, we describe our examples, and show that they are universally tight using Colin and Honda's work. In Subsection \ref{ssec:hfmap}, we discuss the map on Heegaard Floer homology associated to the operation of capping off, and use this to prove Theorems \ref{otcapoff}, ~\ref{thm:atoroidalcneq0} and~\ref{thm:fdtc} and Corollary \ref{cor:atoroidaltight}. In Subsection \ref{ssec:admissiblecap}, we describe the relationship between capping off and admissible transverse surgery, and prove Theorems~\ref{thm:main}, \ref{thm:cappingsurgery} and \ref{prop:legadmintervals}. 

\subsection*{Acknowledgements} The authors thank Vincent Colin, Tobias Ekholm, Yasha Eliashberg, Ko Honda, Janko Latschev and Chris Wendl for helpful correspondence. JAB was partially supported by NSF grant DMS-1104688 and JBE was partially supported by NSF grant DMS-0804820 and thanks the University of Texas, Austin for its hospitality while working on parts of this paper.

\section{Preliminaries}
\label{sec:prelim}

In this section, we recall the definitions of admissible transverse surgery and Legendrian surgery. We will assume throughout this section that the reader is familiar with convex surface theory; see \cite{EH2, honda2} for the necessary background.

Given a torus $T$ and an basis $(\lambda,\mu)$ for $H_1(T;\Z)$, every homologically essential simple closed curve $\gamma\subset T$ is homologous to $q\lambda + p \mu$, for some $p$ and $q$ which are relatively prime. As an unoriented curve, $\gamma$ is therefore determined by the rational number $p/q$, which we call the \emph{slope} of $\gamma$. When $T$ is the boundary of a solid torus, we will choose $\mu$ to be a meridian and $\lambda$ to be a preferred longitude, and we orient $\mu$ and $\lambda$ so that $\mu \cap \lambda = +1$.

\subsection{Admissible Transverse Surgery and Contact Cuts}
\label{ssec:adm}

Consider the open solid torus $U=\R^2\times S^1$ with the contact structure $\xi_0=\ker(\cos f(r)\, d\phi+f(r)\sin f(r) \, d\theta)$ where $f:[0,\infty)\to [0,\pi)$ is an increasing surjective function of $r$. The contact manifold $(U,\xi_0)$ is covered by the standard tight contact structure on $\R^3$, and is therefore tight.  For each $a>0$, we orient the torus $\{(r,\theta, \phi): r=a\}$ as the boundary of the solid torus $\{(r,\theta, \phi): r\leq a\}$, and we let $(\lambda,\mu)$ be the homology basis for this torus given (as unoriented curves) by $\mu=\{\phi = {\rm constant}\}$ and $\lambda=\{\theta = {\rm constant}\}$. 

The characteristic foliations of $\xi$ on the tori $\{(r,\theta, \phi): r=a\}$ are linear with slopes that increase monotonically from $-\infty$ to $\infty$ as $a$ ranges from $0$ to $\infty$. Given $s\in \R$, we let $T_s$ denote the torus $\{(r,\theta, \phi): r=a\}$ whose characteristic foliation has slope $s$, and we let $S_s$ denote the solid torus in $U$ bounded by $T_s$. For $s'>s$, we denote by $S_{s,s'}$ the thickened torus $\overline{S_{s'}-S_s}$. Note that any neighborhood of $T_s$ contains $S_{s-\delta,s+\delta}$ for some $\delta$. The solid tori $S_s$ and the thickened tori $S_{s,s'}$ inherit contact structures from $\xi_0$; accordingly, $S_s$ and $S_{s,s'}$ will refer to smooth manifolds or contact manifolds depending on context. 

If $T$ is a torus in some contact manifold $(M,\xi)$ and $\phi:T\to T_s$ is a diffeomorphism which sends the characteristic foliation on $T$ to that on $T_s$, then $\phi$ extends to a contactomorphism from a neighborhood of $T$ to a neighborhood of $T_s$. It follows that if $T$ is a torus in $(M,\xi)$ with linear characteristic foliation, then some neighborhood $T\times[-1,1]$ of $T=T\times\{0\}$ is contactomorphic to $S_{a,b}$ for some $a<b$. 

Let $K$ be a transverse knot in a contact manifold $(M,\xi)$. It is well-known ({\em cf.}\/ \cite{EH2}), that $K$ has a neighborhood $N$ which is contactomorphic to $S_s$ for some $s>0$, via a map which identifies $K$ with the core $\{r=0\}$ of $S_s$. We refer to such an $N$ as \emph{a standard neighborhood} of $K$. If $N$ has a preferred longitude, we choose our contactomorphism from $N$ to $S_s$ so that it sends this longitude to $\lambda$ (note that as you change which preferred longitude is sent to $\lambda$, the value of $s$ will, in general, change). In a slight abuse of notation, we will often just equate $N$ with $S_s$. For any rational number $r\in(-\infty, s)$,  there is a natural contact structure $\xi_{K}(r)$ on the manifold $M_K(r)$ obtained from $M$ by performing $r$-surgery on $K$. We say $(M_K(r),\xi_K(r))$ is obtained from $(M,\xi)$ by \dfn{admissible transverse $r$-surgery on $K$}. Below, we describe two constructions of $\xi_K(r)$. The first construction is essentially the original definition due to Gay in \cite{gay}, but expressed in the notation developed above. 

For the first construction, notice that there is a solid torus $S_{r}$ in $N=S_s$ whose boundary has characteristic foliation of slope $r$. We remove a slightly larger solid torus $S_{b}$ from $(M,\xi)$, where $b\in(r,s)$. Let $S$ denote the solid torus that is glued to $\overline{M - S_{b}}$ to form $M_K(r)$. The restriction of $\xi$ to $\overline{M - S_{b}}$ induces a contact structure $\xi'$ on $\overline{M_K(r)- S}\cong \overline{M - S_{b}}$, and there is a unique way of extending $\xi'$ to a contact structure $\xi_K(r)$ on all of $M_K(r)$ so that the restriction of $\xi_K(r)$ to $S$ is universally tight. Specifically, there is a diffeomorphism from $\partial S$ to some $T_c$ sending the characteristic foliation of $\xi'$ on $\partial S$ to the characteristic foliation on $T_c$ (this $c$ is uniquely determined by a preferred longitude on $\partial S$ by requiring that the diffeomorphism send this longitude to $\lambda$). We then extend $\xi'$ over $S$ using the contact structure on $S_c$. Notice that the core of $S$ is a transverse knot $K'$, and that $S$ is a standard neighborhood of $K'$. We call $K'$ the \dfn{dual transverse curve to $K$}. One may easily check that $M_K(r)-K'$ is contactomorphic to $M-S_{r}$. 

For the second construction of $\xi_K(r)$, we recall the notion of a \emph{contact cut} \cite{Lerman01}. Suppose $Y$ is a 3-manifold with a torus boundary component $T$, and suppose that there is a free (proper) $S^1$-action on $T$. Let $Y'$ be the quotient space obtained from $Y$ by identifying  points of $T$ in the same orbit, and let $K'\subset Y'$ be the set of points with more than one pre-image under the quotient map from $Y$ to $Y'$. We claim that $Y'$ has a natural smooth structure. This only needs to be checked at points on $K'$. To that end, let $N=T\times [0,1]$ be a collar neighborhood of $T$ in $Y$, and let $f:N\rightarrow [0,1]$ be the obvious projection map. We can assume that the $S^1$-action on $T = T\times\{0\}$ extends to an $S^1$-action on all of $N$ under which $T\times\{t\}$ is invariant for every $t\in[0,1]$. Thinking of $S^1$ as the unit circle in $\C$, we extend this $S^1$-action on $N$ to an $S^1$-action on $N\times \C$ by $\theta\cdot(p,z)=(\theta\cdot p, \theta^{-1} z)$. Now, let $F:N\times \C\to \R$ be the map given by $F(p,z)= f(p)-|z|^2$. 

One may easily check that $0$ is a regular value of $F$ and that the $S^1$-action on $N\times \C$ restricts to a free $S^1$-action on $F^{-1}(0)$. Therefore, $F^{-1}(0)/S^1$ is a smooth 3-manifold; moreover, it is clearly homeomorphic to the quotient space $(T\times[0,1])'\subset Y'$ obtained from $T\times[0,1]$ by collapsing the orbits of the $S^1$-action on $T=T\times\{0\}$ to points. As $(T\times[0,1])'$ is a neighborhood of $K'$, we have verified that $Y'$ is a smooth 3-manifold. Notice that the orbits of the $S^1$-action on $T$ describe closed curves of some slope $r$ (once we have chosen a longitude on $T$). It is easy to see that $Y'$ is homeomorphic to the $r$-Dehn filling of $Y$ -- that is, the manifold obtained by gluing a solid torus $S$ to $Y$ so that the meridian of $\partial S$ is glued to a curve on $T$ of slope $r$. 

Now, suppose that $\xi$ is a contact structure on $Y$, defined near $T$ as the kernel of some contact form $\alpha$. Suppose further that the orbits of the $S^1$-action on $T$ are leaves of the characteristic foliation of $\xi$.  This characteristic foliation is therefore linear, and we can assume that the collar neighborhood $N=T\times [0,1]$ is contactomorphic to $S_{a,b}$ for some $a<b$. The $S^1$-action on $T = T\times \{0\}$ thus extends to an $S^1$-action on $N$ whose orbits on $T\times\{t\}$ are leaves of the characteristic foliation on $T\times\{t\}$. In particular, we may assume that $\alpha$ is invariant under the $S^1$-action on $N$. Now, consider the contact form $\beta=\alpha + x\, dy -y\, dx$ on $N\times \C$, where $(x,y)$ are the coordinates on $\C$. This form is clearly invariant under the $S^1$-action on $N\times \C$, and descends to a contact form $\alpha'$ on $(T\times[0,1])'=F^{-1}(0)/S^1$ ({\em cf.}\/ \cite{Geiges97}). It is easy to check that the contact structure $\xi'=\ker\alpha'$ agrees with the restriction of $\xi$ to $Y'-K'\cong Y-T$, and that $K'$ is a transverse knot in $(Y',\xi')$. The contact manifold $(Y',\xi')$ is said to be the result of a \emph{contact cut} along $T$; see \cite{Lerman01}. 

The construction of $\xi_K(r)$ via contact cuts proceeds as follows. Suppose that $K$ is a transverse knot in $(M,\xi)$ with a standard neighborhood $N=S_s$. Choose any rational number $r\in (-\infty, s)$, and consider the smaller neighborhood $S_{r}$ of $K$ whose boundary has characteristic foliation of slope $r$. There is an obvious free $S^1$-action on $\partial S_{r}$ whose orbits are leaves of the characteristic foliation.  Note that $\xi$ restricted to $Y=\overline{M-S_{r}}$ with this $S^1$-action on $\partial (\overline{M-S_{r}})$ is a contact structure as in the previous paragraph. Performing a contact cut along $\partial (\overline{M-S_{r}})$ results in a contact manifold $(Y',\xi')$ which is easily identified with $(M_K(r), \xi_K(r))$. Note that the knot $K'$ defined in the paragraph above is also the dual transverse curve to $K$; in particular, the restriction of $\xi'$ to $Y'-K'$ is contactomorphic to the restriction of $\xi$ to $M-S_{r}$.  

A simple argument yields the following lemma. 

\begin{lemma}
\label{lem:admutight}
For any $r<s$, admissible transverse $r$-surgery on the core $K$ of $S_s$ results in a solid torus $S$ which is a standard neighborhood of the dual transverse curve $K'$. In particular, $S$ is universally tight.
\qed
\end{lemma}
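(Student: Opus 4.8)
The plan is to read the statement straight off the first construction of $\xi_K(r)$ given above, together with the universal tightness of $(U,\xi_0)$. Recall that in that construction one fixes $b\in(r,s)$, removes the solid torus $S_b$ from $(M,\xi)$, and glues back a solid torus $S$ so that the meridian of $\partial S$ is identified with the slope-$r$ curve on $T_b=\partial S_b$; the contact structure $\xi_K(r)$ is then \emph{defined} on $S$ to be the model structure on $S_c$, where $c$ is the slope of the characteristic foliation of $\partial S$ measured in the basis for $H_1(\partial S)$ given by the meridian of $S$ and a chosen longitude $\ell$. So, essentially by definition, $(S,\xi_K(r)|_S)$ is contactomorphic to the model $S_c$ via a map sending the core $K'$ of $S$ to the core $\{r=0\}$ of $S_c$; all that remains is to package this correctly.

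First I would pin down that $\ell$ can be chosen so that $c>0$, which is what makes $S_c$ literally one of the model standard neighborhoods. The characteristic foliation of $\partial S$ is linear, and, since its slope on $T_b$ is $b$ while the meridian of $S$ is the slope-$r$ curve with $r<b$, it is never the meridian of $S$; hence $c$ is a well-defined real number for any admissible choice of $\ell$, and replacing $\ell$ by $\ell$ plus a copy of the meridian of $S$ changes $c$ by $\mp1$. One therefore chooses $\ell$ with $c\in(0,1]$. A short $2\times2$ computation expressing the slope-$b$ curve in the meridian--longitude basis of $S$, using $r<b$, makes $c$ and the correct choice of $\ell$ explicit, but this is routine. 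With such a choice $S$ is, by definition, a standard neighborhood of $K'$.

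It then remains to check that $S_c$, hence $S$, is universally tight. For this I would note that the core $\{0\}\times S^1$ of $S_c\subset U=\R^2\times S^1$ generates $\pi_1(U)$, so the universal cover of $S_c$ is obtained by unwrapping the $S^1$ factor and sits as a domain in the universal cover $\R^3$ of $U$, carrying the standard tight contact structure pulled back from $\xi_0$; a subset of a tight contact manifold is tight, so $S_c$ is universally tight. The same picture also falls out of the contact-cut description of $\xi_K(r)$, where $S$ is the contact cut of a thin collar $S_{r,r+\delta}$ of $T_r$ along its slope-$r$ boundary and one identifies this contact cut directly with a model $S_c$. I do not expect a genuine obstacle; the lemma merely repackages the construction of admissible transverse surgery, the one point needing care being the framing bookkeeping that normalizes $c$ to be positive.
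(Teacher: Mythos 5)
Your argument is correct and is exactly the ``simple argument'' the paper has in mind: the paper states this lemma without proof because, by construction, the glued-in solid torus is a copy of the model $S_c$ (equivalently, the contact cut of a collar of $T_r$), whose core is the dual transverse curve and which is universally tight since it sits inside $(U,\xi_0)$, covered by the standard tight $\R^3$. Your additional bookkeeping normalizing $c>0$ by the choice of preferred longitude is the only point the paper's definition of a standard neighborhood implicitly requires, and you handle it correctly.
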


\subsection{Surgery on Legendrian Knots}\label{ssec:csurgery}
\label{ssec:leg}

Let $\xi_1$ be the contact structure on $\R^2\times S^1$ defined by $\xi_1=\ker(dy-x\, d\theta)$.  The contact manifold $(\R^2\times S^1,\xi_1)$ is covered by the standard tight contact structure on $\R^3$, and is therefore tight. For $a>0$, let $\lS_a$ denote the solid torus $\{(x,y,\theta): x^2+y^2\leq a\}$. It is a neighborhood of the Legendrian curve $C=\{(x,y,\theta): x=y=0\}$, and its boundary $\partial \lS_a$ is convex with two dividing curves parallel to $\{(x,y,\theta): x=0, y=a\}$. All of the solid tori $\lS_a$ are, more or less, contactomorphic according to the following basic result of Kanda.

\begin{theorem}[Kanda 1997, \cite{Kanda97}]\label{Kanda}
Let $\Gamma$ be a pair of longitudinal curves on the boundary of a solid torus $S$. Let $\mathcal{F}$ be a singular foliation on $\partial S$ that is divided by $\Gamma.$ Then there is a unique (up to isotopy) tight contact structure on $S$ whose characteristic foliation on $\partial S$ is $\mathcal{F}$. \qed
\end{theorem}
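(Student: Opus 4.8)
The plan is to prove existence and uniqueness separately, using standard convex surface theory: Giroux's flexibility theorem, the Legendrian realization principle, Giroux's tightness criterion, edge-rounding, and Eliashberg's uniqueness theorem for tight contact structures on the $3$-ball (see \cite{EH2, honda2} for this machinery). This recovers Kanda's theorem along the lines of Honda's classification of tight contact structures on solid tori.

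\textbf{Existence.} The solid torus $\lS_a\subset(\R^2\times S^1,\xi_1)$ from Subsection~\ref{ssec:leg} is tight, being covered by the standard tight contact structure on $\R^3$, and its boundary is convex with dividing set a pair of longitudinal curves. Fix any diffeomorphism $S\to\lS_a$ carrying a preferred longitude to a preferred longitude; pulling back $\xi_1$ gives a tight contact structure on $S$ with convex boundary whose dividing set is isotopic to $\Gamma$. Since $\mathcal F$ is divided by $\Gamma$, Giroux's flexibility theorem lets us perform a $C^0$-small perturbation supported near $\partial S$, not changing the isotopy class of the contact structure on the interior, so that the characteristic foliation on $\partial S$ becomes exactly $\mathcal F$. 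This produces a tight contact structure on $S$ with the required boundary behavior.

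\textbf{Uniqueness.} Let $\xi_0,\xi_1$ be tight contact structures on $S$ realizing $\mathcal F$. Because a singular foliation divided by $\Gamma$ determines the germ of the contact structure near $\partial S$ up to an isotopy fixing $\partial S$, it suffices to show that any two tight contact structures on $S$ with convex boundary and dividing set $\Gamma$ are isotopic rel $\partial S$. Fix a meridian disk $D$. Since $\partial D$ is a meridian and $\Gamma$ is a pair of longitudes, $\partial D$ meets $\Gamma$ in exactly two points when placed in minimal position; by the Legendrian realization principle we may isotope $D$ so that it is convex with Legendrian boundary and $|\partial D\cap\Gamma|=2$. The dividing set $\Gamma_D\subset D$ then has exactly two boundary points, so it is a single arc together with possibly some closed curves; but a convex disk in a tight contact manifold has no closed dividing curves by Giroux's criterion, so $\Gamma_D$ is a single boundary-parallel arc, and this holds for both $\xi_0$ and $\xi_1$.

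\textbf{Cutting, regluing, and the main obstacle.} Cutting $S$ along $D$ produces a ball with a corner along the two copies of $\partial D$; rounding the corner yields a smooth $3$-ball $B$ with convex boundary $S^2$, and a short bookkeeping with the edge-rounding rule — using that $\Gamma_D$ is a single arc and that $\Gamma$ meets the complementary annulus $\partial S\setminus\nu(\partial D)$ in two arcs — shows that the dividing set on $\partial B$ is a single circle. By Eliashberg's theorem there is a unique tight contact structure on $B^3$ with connected boundary dividing set up to isotopy, so $\xi_0|_B$ and $\xi_1|_B$ are isotopic through an isotopy that can be taken to fix a neighborhood of the corner; since every convex meridian disk has the same (single-arc) dividing configuration and the contact germ near such a disk is standard, this isotopy respects the regluing and descends to an isotopy from $\xi_0$ to $\xi_1$ on $S$. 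I expect the real work to be entirely on the uniqueness side: checking the non-isolating hypothesis of the Legendrian realization principle for $\partial D$, ruling out closed components of $\Gamma_D$ via tightness, and — most delicate of all — the careful tracking of the dividing set under edge-rounding together with the verification that the contactomorphism of $B$ can be made compatible with the regluing.
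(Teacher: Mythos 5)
The paper does not actually prove this statement: it is quoted as Kanda's theorem, with the \qed indicating that the proof is deferred entirely to \cite{Kanda97}, so there is no internal argument to compare yours against. That said, your sketch is the standard convex-surface-theoretic proof (it is essentially the argument used by Kanda and, in dividing-set language, by Honda in \cite{honda2}), and it is sound in outline: the model $\lS_a$ plus Giroux flexibility gives existence, and a convex meridian disk, Giroux's criterion, edge-rounding, and Eliashberg's uniqueness theorem for the tight $3$-ball give uniqueness. Two small points of precision. In the existence step, the diffeomorphism $S\to\lS_a$ should be chosen to carry the given pair $\Gamma$ onto the dividing set of $\partial\lS_a$ (not merely ``a preferred longitude to a preferred longitude''), since $\Gamma$ may be a pair of longitudes of a different longitudinal slope; this is possible because meridional Dehn twists act transitively on longitudes, but as written the pulled-back dividing set need not be isotopic to $\Gamma$. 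In the uniqueness step, the Legendrian realization principle only makes $\partial D$ Legendrian on the convex torus $\partial S$; to perturb $D$ itself to be convex rel boundary you also need the twisting condition $t(\partial D, D)=-\tfrac12|\partial D\cap\Gamma|=-1<0$, which holds here but should be stated. With those adjustments, and carrying out the final standard normalization (make $\xi_0$ and $\xi_1$ agree on a neighborhood of $\partial S\cup D$ before applying Eliashberg rel boundary to the complementary ball), your proposal is a correct proof of the theorem.
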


Suppose $L$ is a Legendrian knot in a contact manifold $(M,\xi)$. It is well-known that $L$ has a neighborhood $N$ which is contactomorphic to $\lS_a$ for some (hence, all) $a>0$. We refer to such an $N$ as \emph{a standard neighborhood} of $L$. Let $(\lambda,\mu)$ be the homology basis for $\partial \lS_a$ given by $\mu = \{\theta = {\rm constant}\}$ and $\lambda = \{x={\rm constant}, \, y={\rm constant}\}$. Note that $\lambda$ is the longitude specified by the contact framing of $C$, and is parallel to the dividing curves. The contactomorphism between $N$ and $\lS_a$ therefore identifies $\lambda$ with the longitude on $\partial N$ given by the contact framing of $L$; this will serve as the preferred longitude on $\partial N = -\partial (\overline{M-N})$.

Suppose $L$ is a Legendrian knot in $(M,\xi)$ and let $N$ be a standard neighborhood of $L$.   Let $\lS = D^2 \times S^1$ be another solid torus with preferred longitude given by $\{pt\}\times S^1$. We perform $\pm 1$-surgery on $L$, with respect to its contact framing, by removing $N$ from $M$ and gluing in the solid torus $\lS$ according to the map $\phi: \partial \lS\to -\partial (\overline{M- N})$ determined by the matrix
\[
\begin{pmatrix}
1 &1\\
0& \pm 1
\end{pmatrix},
\]
with respect to the preferred longitude-meridian coordinate systems on $\partial \lS$ and $-\partial (\overline{M- N})$. We denote this surgered manifold by $M_L(\langle \pm 1\rangle)$. 
The contact structure $\xi$ restricts to a contact structure $\xi'$ on $\overline{M_L(\langle \pm 1\rangle)-\lS}$, and, by Theorem \ref{Kanda}, there is a unique way to extend $\xi'$ over $\lS$ so that it is tight on $\lS$. We denote the resulting contact structure by $\xi_L(\langle \pm 1\rangle)$, and say that $(M_L(\langle \pm 1\rangle),\xi_L(\langle \pm 1\rangle))$ is the result of \emph{contact $\pm 1$-surgery} on $L$. Contact $-1$-surgery on $L$ is also commonly called \emph{Legendrian surgery} on $L$. 

For $b\in[-a,a]$, the knot $L_b\subset \lS_a$ given by $L_b=\{(x,y,\theta): x=0, y=b\}$ is Legendrian and is Legendrian isotopic to the core $C$ of $\lS_a$. Therefore, the contactomorphism from $\lS_a$ to a standard neighborhood $N$ of $L\subset (M,\xi)$ sends $L_b$ to a Legendrian knot which is Legendrian isotopic to $L$. We call this knot a \emph{Legendrian push off} of $L$.
\begin{lemma}[Ding and Geiges 2004, \cite{DG2}]\label{DG}
Suppose $L$ is a Legendrian knot in $(M,\xi)$, and let $L'$ be a Legendrian push off of $L$. After performing contact $\pm 1$-surgery on $L$ and contact $\mp 1$-surgery on $L'$, we obtain a contact structure on $M$ that is isotopic to $\xi$.\qed
\end{lemma}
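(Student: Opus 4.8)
The plan is to verify the cancellation directly at the level of Dehn surgery diagrams and contact cuts, using the fact that contact $\pm 1$-surgery is determined by the smooth surgery coefficient together with the requirement that the glued-in solid torus be tight (Theorem \ref{Kanda}). First I would fix a standard neighborhood $N \cong \lS_a$ of $L$, and inside it the Legendrian push off $L' = L_b$ for some $b \in (0,a)$ with its own standard neighborhood $N' \cong \lS_{a'}$ disjoint from $L$, chosen so that $N'$ together with a collar of $L$ fills out $N$. The key geometric observation is that $\overline{N - N'}$ is a thickened torus $T^2 \times [0,1]$ whose two boundary tori are convex with two dividing curves each of slope $0$ (the contact-framing longitude), since both $L$ and $L'$ have the same contact framing coming from $N$.

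Next I would compute the smooth surgered manifold. Performing contact $(+1)$-surgery on $L$ and contact $(-1)$-surgery on $L'$ amounts, smoothly, to removing $N \sqcup N'$... more precisely removing $N$ (which contains $N'$) and reglueing two solid tori along the two boundary components of $\overline{N - N'}$, using the gluing matrices $\left(\begin{smallmatrix} 1 & 1 \\ 0 & 1\end{smallmatrix}\right)$ and $\left(\begin{smallmatrix} 1 & 1 \\ 0 & -1\end{smallmatrix}\right)$ of Subsection \ref{ssec:csurgery} with respect to the contact-framing longitudes. A slam-dunk / continued-fraction computation on this two-component chain (both components with surgery coefficient $\pm 1$ relative to the \emph{contact} framing, i.e. framings $\pm 1$ relative to the longitude $\lambda$) shows that the resulting manifold is canonically diffeomorphic to $M$, with the composite solid torus $\overline{N-N'} \cup (\text{two reglued tori})$ carried back to $N$ by an explicit diffeomorphism; the two $\pm1$'s cancel because the associated $SL_2(\Z)$ matrices multiply (after the intervening meridional gluing of $\overline{N-N'}$) to the identity, exactly as in the classical Rolfsen / Kirby-calculus cancellation of a $+1$--$-1$ pair linked once.

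It then remains to see that the contact structure obtained this way is isotopic to $\xi$, not merely diffeomorphic to an ambient manifold containing $\xi|_{M \setminus N}$. Since by construction the surgered contact structure agrees with $\xi$ on $\overline{M - N}$, the whole problem localizes to the solid torus $N$: I must show that the contact structure produced on $N$ by the two contact surgeries — which is tight by Theorem \ref{Kanda} applied on each glued-in piece, hence tight on all of $N$ — has convex boundary $\partial N$ with dividing set of slope $0$ (two curves), and therefore, again by Kanda's uniqueness (Theorem \ref{Kanda}), equals $\xi|_N$. Matching the dividing set on $\partial N$ is immediate because the outermost gluing never touches $\partial N = \partial(\overline{M-N})$, so the characteristic foliation there is unchanged. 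The one genuine point to check is tightness of the reconstructed $N$: this follows because each of the three pieces ($\overline{N-N'}$ and the two solid tori glued by the $\pm 1$ matrices) is tight, they are assembled along convex tori whose dividing curves match up with the correct slopes, and a tight thickened torus glued to tight solid tori along boundary-parallel dividing curves of matching slope is tight (no overtwisted disk can appear, by Giroux's criterion / the classification of tight contact structures on solid tori). This last step — certifying tightness of the assembled solid torus so that Kanda's theorem can be invoked — is the main obstacle, and is where I would spend the most care; everything else is bookkeeping with surgery coefficients and the two explicit $2\times 2$ matrices.
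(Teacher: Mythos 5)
The paper offers no argument for this lemma---it is simply quoted from \cite{DG2} with a \qed---so the only question is whether your proof stands on its own. Its skeleton (localize everything to a standard neighborhood $N$ of $L$, check the smooth cancellation, then pin down the contact structure on the reassembled solid torus via Theorem \ref{Kanda}) is reasonable and close in spirit to Ding--Geiges' actual argument. The problem is the step you yourself single out as the crux: tightness of the reassembled solid torus. The principle you invoke---that tight pieces glued along convex tori with matching dividing sets remain tight---is false, and it fails in exactly this kind of situation. Concretely, let $L$ be a Legendrian unknot in $(S^3,\xi_{std})$ with $\tb(L)=-2$ and perform contact $(+1)$-surgery on it. The result is again $S^3$, assembled from the tight solid torus $\overline{S^3-N}$ and the tight solid torus supplied by Theorem \ref{Kanda}, glued along a convex torus with two dividing curves whose slopes match from both sides; yet the resulting contact structure is overtwisted (its $d_3$-invariant is $1/2$ rather than $-1/2$; more generally, contact $(+1)$-surgery on stabilized Legendrian knots is overtwisted). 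Genuine gluing theorems for tightness (Colin, Honda) require much stronger hypotheses and do not apply off the shelf here; indeed, if your gluing principle were true, contact $(+1)$-surgery would always preserve tightness, which it does not. So, as written, tightness of the reconstructed $N$ is unproved and Kanda's uniqueness cannot be invoked.

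There are also bookkeeping slips to repair: since the surgery on $L$ must remove a neighborhood $N_L$ of $L$ disjoint from $N'$, the piece along which you reglue is $N\setminus(N_L\cup N')$, a pair of pants times $S^1$ with three boundary tori, not ``the two boundary components of $\overline{N-N'}$''; and $L$ and $L'$ are not ``linked once''---their linking number is $\tb(L)$, the $\pm 1$'s being measured against the contact framing. Those are fixable. What is genuinely missing is an argument that actually identifies the contact structure on the reassembled solid torus rather than merely asserting it is tight: for instance the explicit model computation of \cite{DG2}; or a cut-and-reglue description along pre-Lagrangian tori in the style of the proof of Lemma \ref{surgeryandslopes}, where the positive and negative twists cancel visibly; or an open book argument realizing the two surgeries as opposite Dehn twists along the same curve on a page, so the monodromy is unchanged. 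Any of these closes the gap; the convex-gluing assertion does not.
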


\section{Connections between Legendrian and admissible transverse surgeries}
\label{sec:legtrans}
In this section, we establish several results concerning the relationships between admissible transverse surgery and Legendrian surgery. Some of these have implications for our invariant $t(K)$; others will be used to prove results related to uniform thickness. Our main theorems are stated below. The first is a generalization of a result of Gay from \cite{gay} that says that \emph{every} Legendrian surgery can be achieved via an admissible transverse surgery. 

\begin{theorem}\label{thm:LisT}
Let $L$ be a Legendrian knot in some contact manifold. Suppose $N$ is a standard neighborhood of $L$. Then, the contact manifold obtained via Legendrian surgery on $L$ can also be obtained via an admissible transverse surgery on a transversal push off $K$ of $L$. Moreover, we can arrange that the neighborhood of $K$ used in defining this transverse surgery is entirely contained in $N$. 
\end{theorem}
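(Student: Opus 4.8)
The plan is to compute explicitly, inside the standard neighborhood $N \cong \lS_a$ of $L$, what Legendrian surgery does and to identify the result with an admissible transverse surgery on a transverse push-off $K$ of $L$. First I would recall that Legendrian surgery on $L$ removes $N$ and glues back a solid torus via the matrix $\left(\begin{smallmatrix} 1 & 1 \\ 0 & -1 \end{smallmatrix}\right)$ with respect to the contact-framing longitude $\lambda$ and meridian $\mu$ on $\partial N$. In the $(\lambda,\mu)$ basis, the new meridian of the glued solid torus is sent to $\lambda - \mu$ (up to sign), i.e. the surgery slope, measured against the contact framing, is $-1$; equivalently, with respect to the Seifert/page framing of $L$ this is the usual topological $(\tb(L)-1)$-surgery. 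So the underlying smooth surgery is Dehn filling along the curve of slope $-1$ in contact-framing coordinates.

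Next I would produce a transverse push-off $K$ of $L$ sitting inside $N$ and locate a standard neighborhood $S_{r} \subset N$ of $K$. The point is that $\partial N \cong \partial \lS_a$ is convex with two dividing curves of slope $0$ (in the $(\lambda,\mu)$ basis, where $\lambda$ is the contact framing), so after rounding/perturbing we can find, just inside $\partial N$, a pre-Lagrangian (linearly foliated) torus $T$ of any slope slightly less than $0$; this $T$ is contactomorphic to some $T_c$ in Gay's model $U$, with $c < 0$ close to $0$. The core of the solid torus it bounds is a transverse curve, and by the standard classification of transverse knots in a tight solid torus it is transversely isotopic to the transverse push-off $K$ of $L$; this uses that a standard transverse neighborhood $S_s$ of $K$ has $s = \tb(L) - \text{(something)} $—more precisely, in contact-framing coordinates the transverse push-off of $L$ has a standard neighborhood $S_s$ with $s$ slightly negative, reflecting $\text{sl}(K) = \tb(L) - \rot(L)$—and in any case $s > c$ for all $c$ slightly less than $0$, so $S_c$ is a genuine standard neighborhood of $K$ contained in $N$.

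Then I would match the two surgeries. Admissible transverse $r$-surgery on $K$, for $r \in (-\infty, s)$, is by definition (via the contact cut, Lemma~\ref{lem:admutight}) the operation: take $S_{r} \subset N$, collapse the leaves of the characteristic foliation on $\partial S_{r}$ (which have slope $r$ in the basis used for $U$), and extend by the universally tight contact structure on the glued solid torus. Smoothly this is Dehn filling along slope $r$. I need to choose the longitude-identification carefully so that the slope $-1$ (contact framing of $L$) appearing in Legendrian surgery corresponds to an $r < s$ in the coordinates attached to $K$'s standard neighborhood. Since $s$ is only slightly negative while the contact-framing slope of the Legendrian surgery curve, re-expressed relative to a longitude on $\partial S_{r}$, is more negative, such an $r$ exists; this is exactly the content of "Legendrian surgery is a transverse surgery of slope below the threshold." Finally, uniqueness of the tight extension (Theorem~\ref{Kanda}) on the one side, and uniqueness of the universally tight extension in the admissible transverse surgery construction on the other, force the two resulting contact structures on the common underlying manifold to agree: both glued-in solid tori are universally tight with the same convex boundary data, hence contactomorphic rel boundary. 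Since the entire modification took place inside $N$ (we removed $S_{r} \subset N$ and the leftover $N \setminus S_{r}$ is untouched), the neighborhood of $K$ used is contained in $N$, giving the "moreover" clause.

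The main obstacle I expect is the bookkeeping of framings and slopes: carefully tracking how the contact framing $\lambda$ of $L$, the page framing, and the longitude on $\partial S_{r}$ used to measure the transverse surgery slope are related, and verifying that the Legendrian surgery coefficient $-1$ (contact framing) really does land in the admissible range $(-\infty, s)$ after this change of coordinates—equivalently, checking that $s$, the slope of a standard neighborhood of the transverse push-off expressed in the relevant basis, is large enough that the required $r$ is admissible. Once the coordinate dictionary between Gay's model $U$ and Kanda's model $\lS_a$ is pinned down on the overlap region $N \setminus \{L\} \cong N \setminus \{K\}$, the identification of the two surgery descriptions is then just an application of the respective uniqueness statements.
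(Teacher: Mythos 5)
Your overall strategy is the same as the paper's: work inside the local model of $N$, observe that the transverse push off $K$ of $L$ is the core of a pre-Lagrangian solid torus of negative slope inside $N$, check that Legendrian surgery and admissible transverse surgery (with the contact framing as preferred longitude) are both smoothly $-1$-Dehn fillings, and then appeal to uniqueness of tight solid tori (Theorem \ref{Kanda}) to match the contact structures. The slope bookkeeping you worry about is fine and matches the paper's choice: in the model $S_{\overline{0}}=S_{-1}\cup S_{-1,\overline{0}}$ the relevant transverse surgery is $-1$-surgery on the core of $S_{-1}$, which is admissible.

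However, there is a genuine gap at the final step. The two surgeries remove \emph{different} pieces of $N$: Legendrian surgery removes essentially all of $N$ and glues a tight solid torus along a convex torus parallel to $\partial N$, while the admissible transverse surgery removes only the small neighborhood $S_{-1}$ of $K$ and performs a contact cut along a pre-Lagrangian torus deep inside $N$. So the ``glued-in solid tori'' do not have ``the same convex boundary data,'' and they cannot be compared directly. The objects that must be compared are the full surgered copies of $N$, i.e.\ the solid torus $\overline{N-S_{-1}}$ union the contact-cut piece on one side, and the result of Legendrian surgery on the model on the other. To apply Theorem \ref{Kanda} you need \emph{both} of these to be tight, and the universal tightness of the cut piece alone does not give tightness of its union with the twisted shell $\overline{N-S_{-1}}$ -- gluing (universally) tight pieces along pre-Lagrangian or convex tori can produce overtwisted manifolds, and indeed the failure of exactly this kind of statement is the main theorem of the paper. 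Your proposal asserts this tightness implicitly but never proves it. The paper closes this gap by embedding the model $S_{\overline{0}}$ as a contact submanifold of $S_{\epsilon}$ and noting that admissible transverse $-1$-surgery on the core of $S_{\epsilon}$ yields a standard neighborhood of the dual transverse curve (Lemma \ref{lem:admutight}), which is universally tight; the surgered copy of $N$ is a contact submanifold of it and hence tight, after which Kanda's uniqueness finishes the argument. You need this (or some equivalent tightness argument) before the final identification is legitimate.
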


The situation is more complicated when considering which admissible transverse surgeries are ``equivalent" to Legendrian surgeries. 

\begin{theorem}\label{SimpleSurg}
Let $K$ be a transverse knot in some contact manifold. Suppose $N$ is a standard neighborhood of $K$ such that the characteristic foliation on $\partial N$ is linear with slope $a$, where $n<a<n+1$ for some integer $n$. 
Then, for any rational number $s<n$, admissible transverse $s$-surgery on $K$ can also be achieved by Legendrian surgery on some Legendrian link in $N$.
\end{theorem}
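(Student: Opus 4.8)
The plan is to use the contact cut description of admissible transverse surgery together with the fact that Legendrian surgery on a Legendrian push off of the core can be used to ``climb'' through the thickened torus neighborhood of $K$. Concretely, since $N$ is a standard neighborhood of $K$ with linear characteristic foliation of slope $a \in (n, n+1)$, we may identify $N$ with $S_a$ inside the model $(U,\xi_0)$. For a rational number $s < n$, the solid torus $S_s \subset S_a$ has boundary with characteristic foliation of slope $s$, and admissible transverse $s$-surgery on $K$ is, by the contact cut construction in Section~\ref{ssec:adm}, obtained by performing the contact cut on $Y = \overline{M - S_s}$ along the $S^1$-action whose orbits are the leaves of slope $s$ on $\partial S_s$. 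The idea is to realize this same surgered manifold by first removing $N$, doing Legendrian surgeries on Legendrian curves sitting inside $S_{s'} \subset N$ for appropriate $s'$, and checking that the contact structure obtained agrees with $\xi_K(s)$ by the tightness/uniqueness statement of Theorem~\ref{Kanda}.

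The key steps, in order, are as follows. First I would set up coordinates: identify $N$ with $S_a$ and locate inside it the convex tori of slopes just below $n$; in particular, there is a convex torus $T$ in $N$ of slope $n$ (or slope $n - \epsilon$), bounding a solid torus $S'$, such that $\overline{N - S'}$ is a thickened torus $S_{c,a}$ for some $c < n < a$. Second, I would take a Legendrian push off $L_0$ of $K$ sitting on (or just inside) the convex torus of appropriate slope; its contact framing has a specific value relative to the surgery coefficient, and Legendrian surgery on $L_0$ (i.e.\ contact $-1$-surgery) changes the smooth surgery slope in a controlled way. Using Lemma~\ref{DG} and the standard ``bypass''/continued fraction technology for expressing a rational surgery coefficient $s < n$ as a sequence of $-1$-surgeries on a chain of Legendrian push offs $L_1, \dots, L_k$ inside successively smaller solid tori, I would produce a Legendrian link $L_1 \cup \cdots \cup L_k \subset N$ whose contact $(-1)$-surgery yields the manifold $M_K(s)$ smoothly. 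Third, and this is the crux, I would verify that the contact structure produced by these Legendrian surgeries is exactly $\xi_K(s)$: both are obtained from $\xi|_{\overline{M - S_s}}$ by filling in a solid torus, so it suffices to check that the glued-in solid torus carries the unique tight contact structure with the prescribed convex boundary characteristic foliation — the Legendrian surgery construction gives a tight filling by Eliashberg's theorem (Legendrian surgery preserves tightness of the model solid torus, which is Stein fillable), and Theorem~\ref{Kanda} then forces it to coincide with the universally tight filling used to define $\xi_K(s)$.

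The main obstacle I expect is the bookkeeping in the second step: showing that \emph{every} rational $s < n$ (not just integers or $s$ of a special form) is achievable, and that all the Legendrian push offs in the chain can be taken to lie inside $N$ with the correct contact framings. This is where the hypothesis $a \in (n, n+1)$ with $s < n$ is used — it guarantees enough ``room'' in $N$ (a full integer's worth of twisting) to fit the first Legendrian push off with contact framing $n$, after which one can descend to any $s < n$ by a finite continued-fraction expansion, each stage of which is a contact $-1$-surgery on a Legendrian knot in a standard neighborhood of the previous dual curve, all nested inside $N$. I would organize this as an induction on the length of the continued fraction expansion of $n - s$, with the base case being a single contact $-1$-surgery on a push off of $K$ and the inductive step invoking the already-established integer/half-integer cases applied to the dual transverse knot inside the newly glued solid torus. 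The verification that the slopes and framings match up at each stage is routine convex surface theory, but must be done carefully to confirm the nesting claim ``some Legendrian link in $N$.''
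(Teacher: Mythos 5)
Your overall skeleton (pass to the local model via contact cuts, realize the rational coefficient by a nested Legendrian link, then compare the two fillings of $\overline{M-S_s}$) is reasonable, but the step you yourself flag as the crux contains a genuine gap: the appeal to Theorem \ref{Kanda} is not valid for a general rational $s<n$. Kanda's uniqueness statement requires the dividing set of the glued-in solid torus to consist of \emph{longitudinal} curves, i.e.\ curves meeting the new meridian disk once. In admissible transverse $s$-surgery with $s=p/q$, $q>1$ (and indeed for most slopes arising here), the dividing curves on the boundary of the glued solid torus intersect its meridian more than once, and by Giroux--Honda's classification there are in general \emph{several} tight contact structures on a solid torus with that boundary data, distinguished by their relative Euler classes -- equivalently, by the signs of the stabilizations in the Ding--Geiges chain you propose. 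Tightness of your locally-performed Legendrian surgeries (which does follow, by embedding the model into a Stein fillable closed manifold) therefore does not force the result to agree with the \emph{universally} tight filling that defines $\xi_K(s)$; you must additionally show that the stabilizations can be, and are, chosen so that the surgered solid torus is the universally tight one. This is exactly the point the paper is careful about elsewhere: in Lemma \ref{lem:thicken} the push-off-and-stabilize argument is used, but with the explicit remark that one must choose the stabilizations appropriately to hit the desired tight structure, and Kanda is only invoked in situations with longitudinal dividing curves (as in the proof of Theorem \ref{thm:LisT}, where the surgery coefficient is $-1$).

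For comparison, the paper's proof of Theorem \ref{SimpleSurg} takes a different route that sidesteps this issue entirely. It reduces, via the contact cut picture and Lemma \ref{DG}, to showing that $S_{-\infty,a}$ can be obtained from $S_{s,a}$ by contact $+1$-surgeries, and the Legendrian knots used are not stabilized push offs of the core but \emph{leaves of the linear characteristic foliations} on the pre-Lagrangian tori $T_{r/s}$ inside the thickened torus. Lemma \ref{surgeryandslopes} computes the effect of contact $\pm1$-surgery on such a leaf as an explicit Dehn twist on slopes and, crucially, proves that the resulting thickened torus is again the minimally twisting, \emph{universally} tight one (using the classification of tight structures on $T^2\times[0,1]$ and a convex-annulus splitting, not Kanda); a Euclidean-algorithm induction on slopes then reaches $-\infty$ from any rational $s<n$. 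If you want to keep your push-off formulation, you would need to replace the Kanda step by an argument controlling the rotation numbers/stabilization signs so that the glued solid torus is universally tight, or else verify universal tightness directly as in Lemma \ref{surgeryandslopes}.
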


Recall from the Introduction that $t(K)$ is the set of slopes for which admissible transverse surgery on $K$ is tight. The following is an immediate corollary of Theorem \ref{SimpleSurg}.

\begin{corollary}
Let $K$ be a transverse knot in a tight contact manifold $(M,\xi)$. If Legendrian surgery on any Legendrian link in $(M,\xi)$ is also tight (for example, if $(M,\xi)$ is fillable or has non-zero Heegaard Floer contact invariant), then $t(K)$ contains the interval $[-\infty, n)$. \qed
\end{corollary}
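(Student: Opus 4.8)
The plan is to read this off directly from Theorem~\ref{SimpleSurg}. Recall that $t(K)\subseteq\R\cup\{\pm\infty\}$ is the set of (necessarily rational) slopes $s$ for which admissible transverse $s$-surgery on $K$ produces a tight contact manifold, and that $n$ is the integer furnished by Theorem~\ref{SimpleSurg}: $N$ is a standard neighborhood of $K$ on whose boundary the characteristic foliation is linear of slope $a$ with $n<a<n+1$. Every transverse knot has such a neighborhood, and to make the conclusion sharpest one chooses $a$ as large as possible, perturbing slightly if $a$ happens to be an integer; in the statement I would take ``$n$'' to mean precisely this integer.

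First I would dispatch the parenthetical: a strongly fillable contact manifold is tight, a contact manifold with nonzero Heegaard Floer contact invariant is tight, and Legendrian surgery preserves both strong fillability and nonvanishing of the contact invariant. Hence in either of the two listed cases the blanket hypothesis ``Legendrian surgery on every Legendrian link in $(M,\xi)$ is tight'' holds, so it suffices to argue under that hypothesis. Now fix any rational $s<n$. By Theorem~\ref{SimpleSurg}, the contact manifold obtained by admissible transverse $s$-surgery on $K$ is contactomorphic to the one obtained by Legendrian surgery on some Legendrian link contained in $N\subset M$; by hypothesis the latter is tight, so $s\in t(K)$. As $s$ ranges over all rationals less than $n$, this gives $[-\infty,n)\subseteq t(K)$, with the left endpoint included per the convention for $t$.

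I do not expect a real obstacle here: the content is all in Theorem~\ref{SimpleSurg}, and what remains is a one-line deduction. The only points needing (routine) care are the bookkeeping that identifies the two parenthetical hypotheses as instances of the general one; the observation that admissible transverse surgery coefficients are rational, so that hitting every rational in $[-\infty,n)$ already exhausts the interval as far as $t(K)$ is concerned; and pinning down the meaning of ``$n$'' so that the statement is unambiguous.
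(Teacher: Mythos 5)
Your proposal is correct and is exactly the argument the paper intends: the corollary is stated as an immediate consequence of Theorem~\ref{SimpleSurg}, with the parenthetical cases handled by the standard facts that Legendrian surgery preserves fillability and nonvanishing of the contact invariant. Your additional remarks about rationality of slopes and the meaning of $n$ are harmless bookkeeping and do not change the argument.
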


The theorem below shows that some admissible transverse $s$-surgeries for $s >n$ can also be achieved via Legendrian surgery. 

\begin{theorem}\label{thm:extraslopes}
Let $K$ be a transverse knot in some contact manifold. Suppose $N$ is a standard neighborhood of $K$ such that the characteristic foliation on $\partial N$ is linear with slope $a$, where $n<a<n+1$ for some integer $n$. Then, there is a decreasing sequence $\{d_k\}$ of numbers less than $a$ and converging to $n$ such that, for all $k$, admissible transverse $d_k$-surgery on $K$ can also be achieved by Legendrian surgery on some Legendrian knot in $N$. 
\end{theorem}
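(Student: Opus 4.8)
The plan is to exploit the tension between the ``transverse picture'' and the ``Legendrian picture'' inside the standard neighborhood $N = S_a$ of $K$. Recall that admissible transverse $d$-surgery on $K$, for $d < a$, is performed by removing the small solid torus $S_d \subset N$ and gluing in a tight solid torus along the contact cut; equivalently (by the first construction in Subsection \ref{ssec:adm}), by a contact cut along a convex torus of slope $d$. The point is that when $d$ is sufficiently close to the integer $n$ from below, the thickened torus $S_{d,a}$ can be arranged — using the standard convex-surface/bypass technology of Honda \cite{honda2,EH2} — to contain a Legendrian knot $L_d$ isotopic to the core $C$ of $N$ whose Thurston--Bennequin framing, measured against the preferred longitude, realizes a value for which Legendrian ($-1$-)surgery on $L_d$ produces the same Dehn filling and the same tight gluing as the transverse surgery. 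Concretely, I would first fix a sequence of rational numbers $d_k \nearrow n$ with $d_k < a$, chosen so that each $d_k$ has the form $\frac{1}{m_k}$ in the appropriate local coordinates (i.e. so that Legendrian surgery on a suitably stabilized Legendrian core, with respect to a framing that differs from the contact framing by the right integer, performs exactly $d_k$-surgery on $K$).

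The key steps, in order, are: (1) Identify $N$ with $S_a$ and locate inside it both the transverse core $K = \{r=0\}$ and a Legendrian representative $L$ of the same knot type sitting on a convex torus; by Theorem \ref{SimpleSurg}'s circle of ideas one knows $\partial N$ has slope $a\in(n,n+1)$, so a convex torus parallel to $\partial N$ inside $N$ has dividing slope $n$ after suitable thickening/thinning. (2) Inside the region between slope $n$ and slope $a$, use bypass attachments to produce, for each $k$, a Legendrian knot $L_k$ isotopic to $C$ with $\tb(L_k)$ equal to a prescribed integer $t_k$ (decreasing, determined by $d_k$); the standard neighborhood of $L_k$ is a solid torus $N_k \subset N$ with convex boundary of the right slope. (3) Compute that contact $-1$-surgery (Legendrian surgery) on $L_k$, which removes $N_k$ and glues in a tight solid torus via Kanda's theorem (Theorem \ref{Kanda}), produces the Dehn surgery coefficient $d_k$ on $K$ — this is the slope arithmetic: Legendrian surgery on $L_k$ changes the surgery coefficient on $K$ from $\infty$ to $t_k + \frac{1}{(\text{something})}$, and one checks $d_k \to n$ as $t_k \to -\infty$ (or as the relevant continued-fraction parameter grows). (4) Match the contact structures: both the transverse $d_k$-surgery and the Legendrian surgery on $L_k$ glue in the \emph{unique} tight contact structure on the filling solid torus with the prescribed boundary characteristic foliation (Lemma \ref{lem:admutight} on one side, Theorem \ref{Kanda} on the other), and they agree with $\xi$ on the common complement; hence the two surgered contact manifolds are contactomorphic.

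I expect step (3)–(4), the precise slope bookkeeping and the verification that the two gluings coincide, to be the main obstacle — not because it is deep, but because one must carefully track three different longitude conventions (the transverse standard-neighborhood longitude, the contact framing of $L_k$, and the ambient preferred longitude of $K$) and reconcile the Dehn-surgery coefficients across them, exactly the kind of computation where signs and off-by-one errors creep in. The geometric input (bypasses realizing all intermediate convex slopes, uniqueness of tight fillings) is entirely standard after the setup in Section \ref{sec:prelim} and the techniques already deployed for Theorems \ref{thm:LisT} and \ref{SimpleSurg}; indeed this theorem should follow by running the proof of Theorem \ref{SimpleSurg} ``at the boundary of its range of validity'' and extracting the sequence $d_k$ from the family of Legendrian knots one is forced to introduce there. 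The only genuinely new point is observing that one does \emph{not} get the full interval $(n, a)$ of extra slopes this way — only a discrete sequence accumulating at $n$ — which is consistent with (and in fact foreshadows) the non-closed, disconnected behavior of $t(K)$ asserted in Theorem \ref{prop:legadmintervals}.
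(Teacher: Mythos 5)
Your proposal has two linked problems, and the second is fatal to the construction you sketch. First, the direction of the sequence: you fix $d_k \nearrow n$ from below, but the theorem asks for a \emph{decreasing} sequence converging to $n$, i.e.\ slopes lying in the interval $(n,a)$ and accumulating at $n$ from above. Slopes below $n$ are already handled by Theorem \ref{SimpleSurg}; the entire content of Theorem \ref{thm:extraslopes} (see the corollary following it) is to produce tight admissible surgeries at slopes strictly between $n$ and $n+1$. Second, and more seriously, your candidate Legendrian knots $L_k$ are taken isotopic to the core $C$ with $\tb(L_k)=t_k$. Legendrian ($-1$-)surgery on a Legendrian knot smoothly isotopic to $K$ is topologically $(t_k-1)$-surgery on $K$ with respect to the preferred longitude: always an \emph{integer}, and one that is at most $n-1$ since $\tb\le n$ inside $S_a$. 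No single Legendrian surgery on a stabilized core can realize a non-integral coefficient, let alone one in $(n,n+1)$; your formula ``$t_k+\frac{1}{(\text{something})}$'' does not arise from surgery on one knot (continued-fraction coefficients require a chain of push-offs, i.e.\ a link, and those still land below $n$), and in any case as $t_k\to-\infty$ the coefficient tends to $-\infty$, not to $n$. So steps (2)--(3) cannot produce the required $d_k$ no matter how the bookkeeping is done.

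The paper's proof uses a different kind of Legendrian knot: after normalizing $n=0$, for each integer $k$ with $1/k<a$ take $L$ to be a leaf of the linear characteristic foliation on the torus $T_{1/k}\subset S_a$ --- a $(1,k)$-cable of $K$, not a representative of the core. Lemma \ref{surgeryandslopes} shows that contact $+1$-surgery on $L$ carries the inner boundary slope $d_k=\frac{k-1}{(k-1)(k+1)+1}=\frac{k-1}{k^2}$ to $-\infty$, i.e.\ turns $S_{d_k,a}$ into $S_{-\infty,a}$; by Lemma \ref{DG} the inverse operation, Legendrian surgery on a push-off of $L$, therefore converts $S_a$ (the contact cut of $S_{-\infty,a}$) into the contact cut of $S_{d_k,a}$, which is exactly admissible transverse $d_k$-surgery on $K$ as in the proof of Theorem \ref{SimpleSurg}. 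The numbers $d_k=\frac{k-1}{k^2}$ satisfy $0<d_k<1/k<a$ and decrease to $n=0$, as required. If you want to salvage your outline, the essential missing idea is precisely this use of Legendrian divides/foliation leaves on the intermediate tori $T_{p/q}$, whose contact framing agrees with the torus framing, so that a single contact surgery can change the slope data rationally; the convex-surface and Kanda-uniqueness ingredients you cite are fine but do not substitute for that.
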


The following is an immediate corollary of Theorem \ref{thm:extraslopes}.

\begin{corollary}
Let $K$ be a transverse knot in a tight contact manifold $(M,\xi)$. If Legendrian surgery on any Legendrian link in $(M,\xi)$ is also tight, then $t(K)$ contains an infinite number of slopes in the interval $(n,n+1)$. 
\end{corollary}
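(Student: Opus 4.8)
\textbf{Proof proposal for Theorem~\ref{thm:extraslopes}.}

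The plan is to study the standard neighborhood $N = S_a$ of $K$ (in the notation of Subsection~\ref{ssec:adm}) and to find, inside $N$, a thickened torus region with very negatively-sloped linear foliation that houses a Legendrian knot whose contact framing and twisting are exactly right to produce the desired admissible transverse surgery. Concretely: fix $N = S_a$ with $n < a < n+1$. For any rational $r < n$, the smaller solid torus $S_r \subset N$ has boundary foliation of slope $r$, and admissible transverse $r$-surgery on $K$ is, by the contact-cut description, the result of gluing a universally tight solid torus to $\overline{M - S_r}$. The idea is to realize this same gluing as a Legendrian surgery by first isotoping $\partial S_r$ so that it becomes convex, reading off its dividing curves, and then finding a Legendrian ruling curve on it which, after taking a standard neighborhood, can be removed and filled by contact $-1$-surgery.

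The key steps, in order: (1) For a well-chosen decreasing sequence $d_k \to n$, arrange by the flexibility of convex surfaces that the boundary of $S_{d_k}$ can be made convex with exactly two dividing curves of slope $d_k'$ for some $d_k' $ near $d_k$ with $d_k' < n$; here I will want $d_k$ to be of the form that makes the arithmetic of the following Legendrian surgery work out — roughly, slopes whose continued-fraction data relative to the integer framing is a single negative integer. (2) On this convex torus, use the Legendrian realization principle to realize as a Legendrian curve a ruling curve $L$ of the appropriate slope so that the contact framing of $L$ differs from the surface framing by exactly $-1$ relative to the relevant coordinates; equivalently, $L$ is a Legendrian representative of the core of an appropriate $S_c \subset S_{d_k}$ whose Thurston--Bennequin framing, measured against the longitude coming from $N$, realizes the integer just below $d_k$. (3) Check that a standard neighborhood of $L$ is contained in $N$ and that Legendrian surgery on $L$ removes this neighborhood and glues back a tight solid torus; by uniqueness of tight contact structures with prescribed boundary foliation on a solid torus (Theorem~\ref{Kanda}) together with Lemma~\ref{lem:admutight}, identify the resulting contact structure with $\xi_K(d_k)$. (4) Verify $d_k < a$ and $d_k \to n$, and that the construction genuinely produces infinitely many distinct such slopes in $(n, n+1)$ — wait, note the $d_k$ themselves lie below $n$; the slopes in $(n,n+1)$ in the corollary come from the reparametrization of longitudes, so I should phrase Step (1)–(3) in a longitude-independent way and only at the end translate to the surgery-coefficient normalization, being careful that ``$d_k$-surgery'' refers to the coefficient in the preferred longitude of $N$.

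The main obstacle I anticipate is Step (2)–(3): matching framings. One must ensure that the Legendrian knot $L$ sitting on the convex torus has \emph{exactly} the Thurston--Bennequin invariant (relative to the $N$-longitude) needed so that a single contact $(-1)$-surgery on $L$ — as opposed to a surgery with some other coefficient, or a surgery on a \emph{link} — reproduces the transverse surgery. This forces a delicate choice of the slopes $d_k$: they cannot be arbitrary rationals approaching $n$ but must be precisely those for which the difference between the surface framing of $S_{d_k}$ and the contact framing of the chosen ruling curve is $-1$. I expect this to pin down $d_k$ as a specific sequence (something like $n - 1/k$ after suitable normalization, or the sequence of slopes whose meridional coefficient in the gluing is integral), and verifying the claim will amount to a careful bookkeeping with the convex-surface edge-rounding lemma and the classification of tight contact structures on solid tori and thickened tori from \cite{honda2}, rather than any deep new input.
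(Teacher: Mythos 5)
There is a genuine gap: your construction never produces surgery slopes in the interval $(n,n+1)$, which is the whole content of the statement. As you set things up (take $S_r\subset N$ for rational $r<n$, find a Legendrian curve whose contact framing relative to the $N$-longitude is ``the integer just below'' the surgery slope, and do one contact $(-1)$-surgery), you are essentially re-deriving Theorem \ref{SimpleSurg}, which covers slopes $s<n$ and does not give the corollary. Your proposed repair -- that the slopes in $(n,n+1)$ ``come from the reparametrization of longitudes'' -- does not work: changing the preferred longitude translates \emph{all} slopes by the same integer, including $a$ and the endpoints $n$, $n+1$, so the assertion ``$t(K)$ contains infinitely many slopes in $(n,n+1)$'' is invariant under that relabeling and cannot be obtained from surgeries at slopes below $n$. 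There is also a structural obstruction to your framing scheme: a single contact $(-1)$-surgery on a Legendrian knot smoothly isotopic to the core $K$ produces an \emph{integral} surgery coefficient on $K$, so no choice of Thurston--Bennequin invariant can yield the non-integral slopes in $(n,a)$; your guess $d_k\approx n-1/k$ is on the wrong side of $n$ for exactly this reason.

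The paper's proof of Theorem \ref{thm:extraslopes} (from which the corollary is immediate, since the resulting surgeries are Legendrian surgeries in $(M,\xi)$ and hence tight by hypothesis) avoids convex perturbation, Legendrian realization and Theorem \ref{Kanda} altogether. Normalizing $n=0$, one takes $L$ to be a \emph{leaf of the linear characteristic foliation} on the pre-Lagrangian torus $T_{1/k}\subset S_a$ for $1/k<a$; such a leaf is Legendrian, is \emph{not} isotopic to the core (it is a slope-$1/k$ curve), and its contact framing agrees with the torus framing, so the explicit Dehn-twist formula of Lemma \ref{surgeryandslopes} applies. That formula shows contact $+1$-surgery on $L$ carries the slope $d_k=\frac{k-1}{(k-1)(k+1)+1}\in(0,1/k)$ to $-\infty$, and then the contact-cut argument from the proof of Theorem \ref{SimpleSurg}, together with Ding--Geiges cancellation (Lemma \ref{DG}), identifies admissible transverse $d_k$-surgery on $K$ with Legendrian surgery on $L\subset N$. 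The arithmetic you defer (``pin down $d_k$'') is precisely this computation, and it forces surgeries on non-core curves of slope $1/k$ rather than any core-framing bookkeeping; without that input your steps (2)--(3) cannot be completed for slopes above $n$.
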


As indicated in the theorem below, it is not always true that admissible transverse surgery can also be achieved (locally) via Legendrian surgeries. 

\begin{theorem}\label{excludedsurgery}
Let $K$ be a transverse knot in some contact manifold $(M,\xi)$.
Suppose $N$ is a standard neighborhood of $K$ such that the characteristic foliation on $\partial N$ is linear with slope $a$, where $n<a<n+1$ for some integer $n$. Then, there is a descending sequence $\{d_k\}$ of numbers less than $a$ and converging to $n$ such that, for all $k$, admissible transverse $d_k$-surgery on $K$ is not the result of Legendrian surgery on any Legendrian link in $N$. Moreover, there is an ascending sequence $\{a_k\}$ of numbers less than $a$ and converging to $n+1$ with exactly the same property. 
\end{theorem}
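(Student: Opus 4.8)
\textbf{Proof plan for Theorem~\ref{excludedsurgery}.}

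The plan is to compare two invariants of the surgered contact manifold $(M_K(d),\xi_K(d))$: the structure of the complement of the dual transverse curve, and the contact structure that results on the glued-in solid torus. By Lemma~\ref{lem:admutight}, admissible transverse $d$-surgery on $K$ yields a solid torus $S$ that is a \emph{universally tight} standard neighborhood of the dual transverse curve $K'$, and $(M_K(d),\xi_K(d))\setminus K'$ is contactomorphic to $M\setminus S_d$, which is contained in the fixed neighborhood $N=S_s$ complement in $M$. On the other hand, if this same surgered manifold were obtained by Legendrian surgery on a Legendrian link $\mathbb L\subset N$, then, passing to the local model, the result of Legendrian surgery on $\mathbb L\subset S_s$ produces a solid torus that contains $S_d$ in its complement but whose core-region solid torus carries the \emph{tight} (but, crucially, in general not universally tight for these slopes) contact structure dictated by Kanda's theorem, with a controlled dividing set determined by the $\tb$'s and contact framings of the components of $\mathbb L$. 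First I would make precise, using convex surface theory in $S_s\cong N$ (exactly as in the proof of Theorem~\ref{SimpleSurg}), which slopes $d<a$ are realizable by Legendrian surgery on a link in $N$: the achievable slopes form a countable set, accumulating at $n$ from above and at $n+1$ from below but with definite gaps, because destabilizing a Legendrian push-off of the core inside $S_s$ can only decrease $\tb$ by integer amounts and the resulting surgery slope on the \emph{transverse} knot is a specific function of those integers together with the starting slope $a\in(n,n+1)$.

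Next I would show that the complement of the slope-$d$ gaps contains two sequences of the required type. Concretely, consider continued-fraction (negative) expansions: the slopes obtainable by Legendrian surgeries on links in $N$ sitting over the core are of the form $n + 1/(\text{something})$ or, after destabilizations, $n+1 - 1/(\text{something})$, so they form a sparse subset of $(n,n+1)$. Between consecutive elements of this sparse set one can always find rationals $d_k \searrow n$ and $a_k \nearrow n+1$ on which admissible transverse surgery is still defined (any $d<a$ works) but which are \emph{not} among the Legendrian-realizable slopes. The point here is purely combinatorial once the first step pins down the realizable set.

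The heart of the argument, and the main obstacle, is proving that these excluded slopes genuinely cannot be realized by Legendrian surgery on \emph{any} Legendrian link in $N$ — not merely links that are push-offs of the core. For this I would argue by contradiction: suppose $(M_K(d),\xi_K(d))$ arises as Legendrian surgery on some link $\mathbb L\subset N$. Legendrian surgery, being equivalent to attaching Weinstein handles, produces a contact structure whose restriction to the relevant solid torus is determined by Kanda's theorem with dividing set of a prescribed slope; in particular, for the excluded slopes the filling solid torus would have to carry a contact structure with more than two dividing curves, i.e. a \emph{non-basic-slope} tight structure, whereas Lemma~\ref{lem:admutight} forces the filling torus of the admissible transverse surgery to be universally tight with exactly two dividing curves of slope $d$. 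Matching the two models along their common complement $M\setminus S_d$ and using the uniqueness statement in Kanda's theorem together with the classification of tight contact structures on solid tori then yields the contradiction: the two surgery slopes — the one obtainable by admissible transverse surgery and the one produced by Legendrian surgery — would have to coincide in a way incompatible with $d$ lying strictly between the realizable values. Carefully tracking how the dividing set slope on the boundary of the handle-attachment region is controlled by the contact framing (not merely the smooth framing) of $\mathbb L$, and ruling out the possibility that stabilizations of components of $\mathbb L$ conspire to produce the excluded slope, is the delicate part; I would handle it by reducing, via the classification of tight contact structures on solid tori and thickened tori, to the case where $\mathbb L$ is a single knot isotopic to the core, where the realizable slopes are explicitly enumerable and the gaps are visible.

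Finally, I would note that the two sequences $\{d_k\}$ and $\{a_k\}$ can be chosen to be descending to $n$ and ascending to $n+1$ respectively simply by selecting, in each gap of the realizable set near $n$ (resp.\ near $n+1$), one excluded rational; since the realizable set accumulates at both $n$ and $n+1$ but is nowhere dense near those points, infinitely many such gaps exist on each side, giving the claimed monotone sequences with the correct limit points $n$ and $n+1$.
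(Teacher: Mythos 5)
Your outline founders at exactly the step you yourself flag as ``the heart of the argument'': ruling out Legendrian surgery on an \emph{arbitrary} link $\mathbb{L}\subset N$. The proposed reduction ``via the classification of tight contact structures on solid tori and thickened tori, to the case where $\mathbb L$ is a single knot isotopic to the core'' is not justified and is precisely the hard part. A Legendrian link in $N$ can have components that are not isotopic to the core (cables, locally knotted curves, multiple components wrapping arbitrarily), and surgery on such a link does not hand you a tight solid torus re-glued along $\partial N$ with a dividing set you can read off from contact framings; Kanda's theorem and convex-surface bookkeeping only control the standard local models, so they cannot exclude the possibility that some complicated link in $N$ produces the same contact manifold as the admissible transverse surgery. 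Relatedly, your picture of the realizable slopes as a sparse, nowhere dense set with visible ``gaps'' is not correct even for core push-offs: the remark following Theorem~\ref{thm:extraslopes} notes that whole intervals of slopes in $(n,a)$ are realizable by local Legendrian surgery, and in any case the theorem to be proved is not a characterization of realizable slopes but the exhibition of specific excluded ones. Finally, the dividing-set dichotomy you invoke (``more than two dividing curves'' versus Lemma~\ref{lem:admutight}) is off: the boundary of a standard transverse neighborhood has linear characteristic foliation, not a convex boundary with dividing curves of slope $d$, and nothing forces a hypothetical Legendrian-surgery description to present itself as a filling solid torus at all.

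The paper's proof is of a genuinely different nature and is global rather than local. One embeds $N=S_a$ as a standard neighborhood of the transverse unknot in the tight $S^3$, normalized so that $n=-1$. For the ascending sequence $a_k=\frac{k(n+1)+n}{k+1}$ the surgered manifold is the tight $S^3$ itself, so a Legendrian-surgery description on any link in $N$ would produce, by attaching Weinstein $2$-handles to $B^4$, a Stein filling of $(S^3,\xi_{std})$ with nontrivial $H_2$, contradicting Gromov's uniqueness of $B^4$. For the descending sequence $d_k=\frac{(k+1)n+1}{k+1}$ the surgered manifold is $L(k,1)$; the classification of Stein fillings of $L(k,1)$ (McDuff, Plamenevskaya--Van Horn-Morris) forces $H_2$ of rank one, hence forces the hypothetical link to be a single knot, and then the Berge--Gabai classification of knots in a solid torus admitting solid-torus surgeries rules that knot out (with the single exception $k=3$, which is discarded). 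None of this 4-dimensional and Dehn-surgery input appears in your plan, and without some substitute for it the exclusion of arbitrary links in $N$ remains unproved.
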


\begin{remark}
Recall from the discussion in Subsection \ref{ssec:cappingintro} that the examples used in the proof of our main theorem will show that there are admissible transverse surgeries that cannot be achieved by \emph{any} sequence of Legendrian surgeries, whether these Legendrian surgeries take place near the original transverse knot, as in Theorem \ref{excludedsurgery}, or otherwise.
\end{remark}

We prove the above theorems in the next two subsections. In the last subsection, we discuss the relationship between Legendrian and admissible transverse surgery for uniformly thick knot types, and we prove Theorem \ref{thm:utgood} and Corollary \ref{cor:uthickt}.

\subsection{Admissible Transverse Surgeries Which Are Local Legendrian Surgeries}
\label{ssec:admleg}

In this subsection we identify admissible transverse surgeries that can be achieved locally by Legendrian surgeries (Theorems~\ref{SimpleSurg} and~\ref{thm:extraslopes}), and we show that every Legendrian surgery can be achieved by an admissible transverse surgery (Theorem~\ref{thm:LisT}). 

Recall from Subsection~\ref{ssec:adm} that, for an interval $(r,s)$, $S_{r,s}$ denotes the thickened torus $T^2\times[0,1]$ with the contact structure which is tangent to the $[0,1]$-factor and rotates from slope $r$ on $T^2\times\{0\}$ to slope $s$ on $T^2\times\{1\}$, hitting every slope in $(r,s)$ exactly once. This contact structure is said to be \emph{minimally twisting}. Extending the notation of Subsection~\ref{ssec:adm}, we allow for $r,s = \pm \infty$, and we think of $(r,s)$ as an interval on the circle obtained from the real line by identifying $+\infty$ with $-\infty$; in particular, if $r>s$, then $(r,s)$ is the union $(r,\infty]\cup[-\infty,s)$.

With this notation, observe that $S_a$ is obtained from $S_{-\infty, a}$ by performing a contact cut along $T^2\times\{0\}$. Moreover, admissible transverse $s$-surgery on the core of $S_a$ is obtained from $S_{s,a}$ by performing a contact cut along $T^2\times\{0\}$, as described in Subsection~\ref{ssec:adm}. Thus, to prove Theorem~\ref{SimpleSurg}, we need only show that $S_{s,a}$ can be obtained from $S_{-\infty, a}$ by a sequence of Legendrian surgeries (as in Subsection \ref{ssec:adm}, we identify the standard neighborhood $N$ in Theorem~\ref{SimpleSurg} with $S_a$ via a contactomorphism which identifies $K$ with the core of $S_a$).  Or equivalently (by Lemma~\ref{DG}), that $S_{-\infty, a}$ can be obtained from $S_{s,a}$ from a sequence of $+1-$contact surgeries. We will prove the latter. To do so, we first determine the effect of contact $\pm 1-$surgeries on knots in the thickened tori $S_{r,s}$.

\begin{lemma}\label{surgeryandslopes}
For $r/s\in (p_0/q_0, p_1/q_1)$, recall that $T_{r/s}$ denotes the torus in $S_{p_0/q_0, p_1/q_1}$ that is linearly foliated by curves of slope $r/s$. Let $L$ be one of these curves. The result of contact $\pm 1$-surgery on $L$ is $S_{p'_0/q'_0, p_1/q_1}$, where 
\[
p'_0/q'_0=\frac{\mp r^2q_0+(1\pm rs) p_0}{(1\mp rs) q_0\pm s^2p_0}.
\]
\end{lemma}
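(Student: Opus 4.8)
The plan is to reduce the statement to a computation about how the surgery affects the mod-$\mathbb{Z}$ slope data on the boundary of the thickened torus. First I would set up coordinates: fix the basis on $T^2$ in which $p_0/q_0$ and $p_1/q_1$ are the boundary slopes of $S_{p_0/q_0,p_1/q_1}$, and recall from Subsection~\ref{ssec:leg} that, for a Legendrian knot $L$ sitting on the convex torus $T_{r/s}$ as a leaf of the characteristic foliation, a standard neighborhood $N(L)$ has two dividing curves of slope $r/s$ and the contact framing of $L$ agrees with the framing induced by $T_{r/s}$. This is the key point: contact $\pm1$-surgery on $L$ means removing $N(L)$ and regluing by the matrix $\left(\begin{smallmatrix} 1 & 1 \\ 0 & \pm1\end{smallmatrix}\right)$ with respect to the (contact-framing longitude, meridian) basis on $\partial N(L)$, then filling in with the unique tight contact structure.

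The heart of the argument is to identify the resulting manifold with another thickened torus $S_{p_0'/q_0', p_1/q_1}$. I would split $S_{p_0/q_0,p_1/q_1}$ along $T_{r/s}$ into two minimally twisting pieces $S_{p_0/q_0, r/s}$ and $S_{r/s, p_1/q_1}$; the surgery is supported in a neighborhood of $T_{r/s}$, so the outer piece $S_{r/s,p_1/q_1}$ is untouched and contributes the $p_1/q_1$ boundary on one side. For the inner side, I would argue that the surgered piece $S_{p_0/q_0,r/s} \cup (\text{surgery torus})$ is again a minimally twisting thickened torus: it has one boundary of slope $p_0/q_0$ and on the new boundary (coming from the surgery solid torus) we must compute the slope of the dividing curves after regluing. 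Because the gluing and the tightness requirement pin everything down (via Kanda's theorem, Theorem~\ref{Kanda}, and Honda's classification of tight contact structures on thickened tori), the only thing to compute is an image of a slope under a linear map of $H_1(T^2)$, and checking that the new piece is still minimally twisting (so that the two pieces glue to a single minimally twisting $S_{p_0'/q_0', p_1/q_1}$ with the claimed parameters).

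Concretely, here is the computation I would carry out. Write the inclusion $\partial N(L) \hookrightarrow T_{r/s}$ and express the contact-framing longitude $\lambda_L$ and meridian $\mu_L$ of $L$ in the ambient basis: $\mu_L = (s,r)$ (a copy of the curve of slope $r/s$, since $L$ lies on $T_{r/s}$ with that slope and its contact framing is the surface framing), while the meridian of $L$ — the curve bounding a disk in $N(L)$ — is a curve $(q,p)$ with $ps - qr = \pm 1$, say normalized. Under contact $\pm1$-surgery the new meridian (the curve that bounds in the surgery solid torus) becomes $\lambda_L \pm \mu_L$ up to the usual identifications, and I would push this through the change-of-basis to get the new boundary slope of the inner piece. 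Collecting terms and simplifying, this should yield exactly
\[
p_0'/q_0' = \frac{\mp r^2 q_0 + (1 \pm rs)p_0}{(1 \mp rs)q_0 \pm s^2 p_0}.
\]

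I expect the main obstacle to be bookkeeping rather than conceptual: keeping the contact-framing conventions straight (the contact framing of a Legendrian leaf on a convex torus equals the surface framing, which differs from the Seifert framing), getting the signs in the $\left(\begin{smallmatrix} 1 & 1 \\ 0 & \pm1\end{smallmatrix}\right)$ gluing to match the $\pm$ in the stated formula, and verifying that the surgered inner piece is genuinely minimally twisting with no ``extra'' twisting introduced — this last point is what lets us invoke the uniqueness part of Honda's classification to conclude the surgered manifold is precisely the claimed $S_{p_0'/q_0', p_1/q_1}$ rather than merely a thickened torus with the right boundary slopes. A useful sanity check I would run at the end: specialize to $r/s = \infty$ or to integer slopes and confirm the formula reproduces the known effect of contact $\pm1$-surgery on a Legendrian knot in a standard neighborhood (shifting the boundary slope by the appropriate amount), and confirm the $+1$ and $-1$ cases are inverse to each other as Lemma~\ref{DG} demands.
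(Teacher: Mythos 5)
The first half of your plan---using that the contact framing of a leaf $L\subset T_{r/s}$ is the framing induced by $T_{r/s}$, so that contact $\pm1$-surgery is topologically a cut along $T_{r/s}$ and reglue by a Dehn twist along $L$, and then computing the image of the slope $p_0/q_0$ under the twist matrix---is exactly the paper's topological computation, and that part is fine. The gap is in the contact-geometric half, where you identify the surgered contact structure with $S_{p_0'/q_0',p_1/q_1}$. Your splitting along $T_{r/s}$ does not localize the surgery the way you claim: $L$ lies \emph{on} $T_{r/s}$, so any standard neighborhood $N(L)$ straddles the splitting torus and neither $S_{p_0/q_0,r/s}$ nor $S_{r/s,p_1/q_1}$ is ``untouched''; you cannot speak of ``the surgered inner piece $S_{p_0/q_0,r/s}\cup(\text{surgery torus})$'' without first choosing a decomposition compatible with $N(L)$.

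More seriously, the uniqueness you invoke is not available. Kanda's Theorem~\ref{Kanda} only pins down the tight solid torus being glued in (which is part of the definition of contact surgery); it says nothing about the resulting thickened torus. And by the Giroux--Honda classification \cite{Giroux00,honda2}, ``tight, minimally twisting, with the given boundary data'' does \emph{not} determine the contact structure: in general there are several such structures (the virtually overtwisted ones), and only the \emph{universally tight} minimally twisting structure is unique up to orientation of the plane field. Your proposal never establishes that the surgered manifold is tight at all, let alone universally tight or minimally twisting---and that is precisely the content of the lemma. The paper closes this gap by splitting off a middle piece $S_{\overline{r_0/s_0},\overline{r_1/s_1}}$ whose convex boundary slopes $r_0/s_0$, $r_1/s_1$ are chosen to form integral bases with $r/s$, so that the complement of an $I$-invariant neighborhood of the boundary together with a convex annulus of $r/s$-ruling curves is exactly a standard neighborhood of $L$; running the same picture in $S_{\overline{r_2/s_2},\overline{r_1/s_1}}$ (where the Legendrian divide $L'$ plays the role of $L$) exhibits the latter as the result of contact $-1$-surgery on $L$, and the reassembled three-piece manifold is then explicitly a union of universally tight pieces, hence universally tight, which is what allows the appeal to uniqueness. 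You need an argument of this type (or some other proof that the surgered structure is universally tight and minimally twisting) before your change-of-basis computation identifies the result as $S_{p_0'/q_0',p_1/q_1}$ rather than merely some tight thickened torus with the right boundary foliations.
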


\begin{remark} Let $L$ be as in Lemma~\ref{surgeryandslopes}. While not immediately obvious, it is nonetheless easy to check that contact $-1$-surgery on $L$ yields a thickened torus with less twisting; that is, $(p_0'/q_0',p_1/q_1)\subsetneq (p_0/q_0,p_1/q_1)$. Likewise, contact $+1$-surgery on $L$ yields a thickened torus with more twisting; that is, $(p_0/q_0,p_1/q_1)\subsetneq (p_0'/q_0',p_1/q_1)$.
\end{remark}

\begin{remark}\label{geom}

Our proof of Lemma \ref{surgeryandslopes} involves understanding how a curve on the torus of some slope changes under a positive or negative Dehn twist about a curve of some other slope. Although we will not use it explicitly in the proof, the Farey tessellation provides a nice, geometric way of understanding these kinds of changes. Recall that in the Farey tessellation of the Poincar{\'e} disk $\mathbb{D}$, rational slopes on the torus are labeled by points on $\partial \mathbb{D}$, and two slopes on the boundary are connected by a geodesic edge in $\mathbb{D}$ if and ony if the curves on the torus with these two slopes form an integral basis for $H_1(T^2;\mathbb{Z})$ (\emph{cf.} \cite{honda2} for the precise conventions). Given a slope $r/s$, we can order the slopes connected by an edge to $r/s$ by $s_k$, for $k\in \Z$, such that $s_k$ is counterclockwise of $s_{k-1}$ on $\partial \mathbb{D}$. Then a positive (resp. negative) Dehn twist about a curve of slope $r/s$ sends a curve of slope $s_k$ to one of slope $s_{k+1}$ (resp. $s_{k-1}$). From this, one can determine what such Dehn twists do to curves of any other slope by writing the slope as a ``Farey sum" of some number of copies of $s_k$ and $s_{k+1}$ for the appropriate $k$. 
\end{remark}

\begin{proof}[Proof of Lemma \ref{surgeryandslopes}]

Note that the contact framing on $L$ agrees with the framing on $L$ coming from its embedding in $T_{r/s}$. Therefore, contact $\pm1$-surgery on $L$ is topologically the same as $\pm1$-surgery with respect to the framing induced by $T_{r/s}$. Recall that $\pm1$-surgery with respect to the latter framing yields the same (topological) manifold as that obtained by cutting $S_{p_0/q_0, p_1/q_1}$ along $T_{r/s}$ into the pieces $S_{p_0/q_0, r/s} \cup S_{r/s, p_1/q_1}$, and then re-gluing along $T_{r/s}$ by $D_L^{\mp 1}$, where $D_L$ is a positive Dehn twist along $L \subset T_{r/s}$. The result of contact $\pm 1$-surgery on $L$ is therefore another thickened torus. 

Recall that $L$ is a curve in $T_{r/s}$ of slope $r/s$. It is not hard to see that the map $D^{\pm}_L$ is described by the matrix 
\[
\begin{pmatrix}
1\pm rs & \mp s^2\\
\pm r^2 & 1\mp rs
\end{pmatrix},
\]
with respect to the longitude-meridian coordinates on $T_{r/s}$. This map sends a curve of slope $p_0/q_0$ to one of slope $p'_0/q'_0=\frac{\pm r^2q_0+(1\mp rs) p_0}{(1\pm rs) q_0\mp s^2p_0}$. Therefore, contact $\pm 1$-surgery on $L$ results in a contact structure on $T^2\times[0,1]$ for which the characteristic foliation on $T^2\times\{0\}$ has slope $p'_0/q'_0$ and the characteristic foliation on $T^2\times \{1\}$ has slope $p_1/q_1$. To show that this contact structure agrees with that on $S_{p'_0/q'_0, p_1/q_1}$, we need only show that it is universally tight (up to an orientation on the contact planes, there is a unique minimally twisting, universally tight contact structure on a thickened torus \cite{honda2}). We will prove universal tightness for the contact $-1$-surgery; the proof for the contact $+1$-surgery is virtually identical. It can also be deduced from the result for contact $-1$-surgery together with Lemma~\ref{DG}. 

Let $S_{\overline{a},b}$ be the thickened torus $T^2\times [0,1]$ with universally tight contact structure such that $T^2\times\{0\}$ is convex with two dividing curves of slope $a$ and $T^2\times \{1\}$ has linear characteristic foliation of slope $b$. Note that $S_{\overline{a},b}$ is obtained from $S_{a,b}$ by perturbing the leftmost boundary component to be convex (of course, this perturbation takes place in a slightly larger thickened torus). We define $S_{a,\overline{b}}$ and $S_{\overline{a},\overline{b}}$ analogously. With this notation in place, we begin by splitting $S_{p_0/q_0, p_1/q_1}$ into three pieces. Let $r_0/s_0$ be the rational number in $(p_0/q_0, p_1/q_1)$ closest to $p_0/q_0$ for which curves of slopes $r_0/s_0$ and $r/s$ form an integral basis for $H_1(T^2;\Z)$, and let $r_1/s_1$ be the rational number in $(p_0/q_0, p_1/q_1)$ closest to $p_1/q_1$ with the same property. We then split $S_{p_0/q_0, p_1/q_1}$ as $S_{p_0/q_0, \overline{r_0/s_0}}\cup S_{\overline{r_0/s_0}, \overline{r_1/s_1}}\cup S_{\overline{r_1/s_1}, p_1/q_1}$. Note that $L\subset T_{r/s}$ is in the $S_{\overline{r_0/s_0}, \overline{r_1/s_1}}$ piece.

Since the curves of slopes $r_0/s_0$ and $r/s$ form an oriented integral basis for $H_1(-T^2,\Z)$, the Dehn twist $D_L$ sends a curve of slope $r_0/s_0$ to a curve of slope $r_2/s_2 = \frac{r_0+r}{s_0+s}$. We claim that the result of contact $-1-$contact surgery on $L\subset S_{\overline{r_0/s_0}, \overline{r_1/s_1}}$ is $S_{\overline{r_2/s_2}, \overline{r_1/s_1}}$. This claim then implies that contact $-1$-surgery on $L\subset S_{p_0/q_0, p_1/q_1}$ is the union of three universally tight pieces, $S_{p_0/q_0, \overline{r_0/s_0}}\cup S_{\overline{r_2/s_2}, \overline{r_1/s_1}}\cup S_{\overline{r_1/s_1}, p_1/q_1}$, where the first piece is glued to the second via the Dehn twist $D_L$. It follows that this union is itself universally tight (this may easily be checked as we have and explicit description of the resulting contact structure, or one may simply refer to the classification of tight structures on thickened tori in \cite{Giroux00, honda2}), completing the proof of Lemma \ref{surgeryandslopes}. All that remains is that we prove this claim.

To prove the above claim, let us think about $S_{\overline{r_0/s_0}, \overline{r_1/s_1}}$ as $T^2\times[0,1]$. We can arrange that the ruling curves on the boundary components $T^2\times\{0\}$ and $T^2\times \{1\}$ have slope $r/s$. Let $A$ be a properly embedded, convex annulus whose boundary consists of one ruling curve on $T^2\times\{0\}$ and one on $T^2\times \{1\}$. Let $N$ be an $I$-invariant neighborhood of the union $T^2\times\{0\}\cup T^2\times\{1\} \cup A$, with corners rounded. It follows from  our choice of $r_0/s_0$ and $r_1/s_1$ that the complement $\overline{S_{\overline{r_0/s_0}, \overline{r_1/s_1}}-N}$ is a standard neighborhood of $L$. 
Similarly, let $A'$ be a properly embedded, convex annulus in $S_{\overline{r_2/s_2}, \overline{r_1/s_1}}$ whose boundary consists of one ruling curve of slope $r/s$ on each of the boundary components of $S_{\overline{r_2/s_2}, \overline{r_1/s_1}}$, and let $N'$ be the corresponding $I$-invariant neighborhood. The complement of $N'$ in $S_{\overline{r_2/s_2}, \overline{r_1/s_1}}$ is also a standard neighborhood of a Legendrian divide $L'\subset T_{r/s}\subset S_{\overline{r_2/s_2}, \overline{r_1/s_1}}$ of slope $r/s$. There is clearly a contactomorphism from $N'$ to $N$, and it is easy to see that this contactomorphism sends the meridian of $L'$ to a curve of slope $-1$ on the boundary of a standard neighborhood of $L$. It follows that $S_{\overline{r_2/s_2}, \overline{r_1/s_1}}$ is obtained from $S_{\overline{r_0/s_0}, \overline{r_1/s_1}}$ by removing a standard neighborhood of $L$ and re-gluing it as prescribed by contact $-1$-surgery. 
\end{proof}

We now return to the proof of Theorem \ref{SimpleSurg}. 

\begin{proof}[Proof of Theorem \ref{SimpleSurg}]
Suppose $n<a<n+1$ and $s<n$ as in the hypothesis of Theorem \ref{SimpleSurg}. Recall from the beginning of this section that we need only show that $S_{-\infty,a}$ can be obtained from $S_{s,a}$ via a sequence of contact $+1-$surgeries. To simplify the notation, let us assume that $n=-1$ (we can do this since there is a diffeomorphism of $T^2\times[0,1]$ which preserves the meridian and sends any longitude to any other longitude). 

Then $s<-1$, and there is a negative integer $-m$ such that $-m-1\leq s < -m$. If $s=-m-1$, then a negative Dehn twist along a curve of slope $-m$ sends a curve of slope $s$ to one of slope $-\infty$. It then follows from Lemma \ref{surgeryandslopes} and its proof that $S_{-\infty,a}$ is the result of contact $+1$-surgery on $L\subset S_{s,a}$, where $L$ is a leaf of the characteristic foliation on $T_{-m}\subset S_{s,a}$, and we are done. Below, we see that the more general case can be reduced to this one.

Let $r_k=\frac{k(-m)+(-m-1)}{k+1}$. For integers $k\geq 0$, these are rational numbers in the interval $[-m-1,-m]$ which start at $r_0=-m-1$, increase monotonically with $k$, and approach $-m$ as $k$ approaches $\infty$. These numbers are constructed precisely so that a curve of slope $r_k$ and a curve of slope $-m$ form an oriented integral basis for $H_1(-T^2,\Z)$. Let $k\geq 0$ be such that $s\in[r_k,r_{k+1}]$. Then we can write \[s = \frac{a[k(-m)+(-m-1)] + b[(k+1)(-m)+(-m-1)]}{a(k+1) + b(k+2)}\] for some non-negative integers $a,b$. Since a negative Dehn twist along a curve of slope $-m$ sends a curve of slope $r_{j+1}$ to one of slope $r_{j}$ for all $j\in \Z$, the same Dehn twist sends a curve of slope $s$ to one of slope \[\frac{a[(k-1)(-m)+(-m-1)] + b[k(-m)+(-m-1)]}{ak + b(k+1)}.\] A composition of $k+1$ such Dehn twists therefore sends a curve of slope $s$ to one of slope \begin{equation}\label{eqn:s''1}s'=\frac{b(-m-1)-a} {b}.\end{equation}

If $b=0$, then $s' = -\infty$, and we are done. If $a=0$, then $s' = -m-1$ and we are done by the discussion in the previous paragraph. Suppose, now, that $a$ and $b$ are both positive. Write $a = lb+r$, where $l\geq 0$ and $0\leq r<b$. If $r=0$, then $s' = -m-l-1$, and a negative Dehn twist along a curve of slope $-m-l$ sends a curve of slope $s'$ to one of slope $-\infty$, and we are done. Let us then assume that $r>0$, and write \begin{equation}\label{eqn:s''}s' = \frac{b(-m-l-1)-r}{b}.\end{equation} Since $r<b$, $s'$ is in the interval $[-m-l-2,-m-l-1],$ and we apply the argument from the preceding paragraph over again, substituting $m+l+1$ for $m$ everywhere. In particular, we start by writing \begin{equation}\label{eqn:s''2}s' = \frac{a'[k'(-m-l-1)+(-m-l-2)] + b'[(k'+1)(-m-l-1)+(-m-l-2)]}{a'(k'+1) + b'(k'+2)}\end{equation} for some integers $a', b', k' \geq 0$. As above, a composition of $k'+1$ negative Dehn twists along a curve of slope $-m-l-1$ sends a curve of slope $s'$ to one of slope \begin{equation}\label{eqn:s'''}s''=\frac{b'(-m-l-2)-a'} {b'}.\end{equation}

Comparing the expressions in (\ref{eqn:s''}) and (\ref{eqn:s''2}), we find that $r = a'+b'$. If either $a'$ or $b'$ is zero, then we are done, as above. Otherwise, $a'<r\leq a$ and $b'<r<b$. In other words, we have obtained in (\ref{eqn:s'''}) an expression of the same form as that in (\ref{eqn:s''1}), but where we have replaced $a$ and $b$ by strictly smaller positive integers and $m$ by a smaller negative integer. We can then repeat the argument in the preceding paragraph, applying negative Dehn twists, until we obtain a curve of slope \[s^{(i+1)} = \frac{b^{(i)}(-m-t) - a^{(i)}}{b^{(i)}},\] where $t>0$ and either $b^{(i)}$ or $a^{(i)}$ is zero, in which case we are done.
\end{proof}

The proof of Theorem \ref{thm:extraslopes} is considerably less involved.

\begin{proof}[Proof of Theorem \ref{thm:extraslopes}]
Suppose $n<a<n+1$ as in the hypothesis of Theorem \ref{thm:extraslopes}. To simplify notation, let us assume that $n=0$. Let $k$ be a positive integer such that $1/k<a$. Let $L\subset T_{1/k}$ be a foliation curve. By the formula in Lemma \ref{surgeryandslopes}, contact $+1$-surgery on $L$ sends the curve of slope $d_k=\frac{k-1}{(k-1)(k+1)+1}$ to a curve of slope $-\infty$. It follows, as in the proof of Theorem \ref{SimpleSurg}, that admissible transverse $d_k$-surgery on $K$ is the same as Legendrian surgery on $L$. 
\end{proof}

\begin{remark}
A more careful analysis allows one to find whole intervals of slopes between $n$ and $a$ for which admissible transverse surgery can also be achieved via Legendrian surgery on some link in $N$. For example, one can show that if $a\in (1/2,1)$, then admissible transverse $s$-surgery for any $s \in [2/9,1/3)$ can be achieved via Legendrian surgery. 
\end{remark}

We conclude this subsection with the proof of Theorem~\ref{thm:LisT}, which states that any Legendrian surgery can also be achieved locally via an admissible transverse surgery.

\begin{proof}[Proof of Theorem \ref{thm:LisT}]
Let $L$ be a Legendrian knot in $(M,\xi)$ and $N$ a standard neighborhood of $L$, as in the hypothesis of Theorem \ref{thm:LisT}. It suffices to work with a local model for $N$. To this end, let $S_{\overline{0}}$ denote the contact solid torus $S_{-1}\cup S_{-1,\overline{0}}$. Since $S_{\overline{0}}$ is a contact submanifold of  $S_{\epsilon}$ for some $\epsilon>0$, it is tight. Moreover, $\partial S_{\overline{0}}$ is convex with dividing set consisting of two curves of slope $0$. Theorem \ref{Kanda} then implies that $S_{\overline{0}}$ is contactomorphic to $N$. In a slight abuse of notation, we will therefore simply equate $N$ with $S_{\overline{0}}$.

Note that the core $K\subset S_{-1}$ is a transversal pushoff of $L$. To prove Theorem \ref{thm:LisT}, it therefore suffices to show that Legendrian surgery on $L$ has the same effect on $S_{\overline{0}}$ as does admissible transverse $-1$-surgery on $K$. The latter surgery is obtained by removing $S_{-1}$ from $S_{\overline{0}}$ and then performing a contact cut along $T_{-1}$. The resulting manifold is a solid torus $S$ whose meridian is a curve of slope $-1$ on $\partial S = \partial S_{\overline{0}}.$ In particular, the dividing set on $\partial S$ consists of two longitudinal curves. Since $S_{\overline{0}}$ was a contact submanifold of $S_{\epsilon}$, the solid torus $S$ is a contact submanifold of the manifold obtained from $S_{\epsilon}$ by admissible transverse $-1$-surgery on $K$. By Lemma \ref{lem:admutight}, this surgered manifold is a standard neighborhood of the dual core curve to $K$ and is therefore tight. Thus, $S$ is tight as well.

Next, observe that we can break $S_{\overline{0}}$ into $S'_{\overline{0}}\cup S_{\overline{0},\overline{0}}$, where $S'_{\overline{0}}$ is a contactomorphic to $S_{\overline{0}}$ and $S_{\overline{0},\overline{0}}$ is an $I$-invariant collar neighborhood of $\partial S_{\overline{0}}$ in $S_{\overline{0}}$. Legendrian surgery on $L$ is performed by removing $S'_{\overline{0}}$ from $S_{\overline{0}}$ and re-gluing according to the map described in Subsection \ref{ssec:leg}. The result is again a tight solid torus $S'$ whose meridian is  a curve of slope $-1$ on $\partial S' = \partial S_{\overline{0}}.$ Theorem \ref{Kanda} then implies that $S$ is contactomorphic to $S'$.
\end{proof}

\subsection{Admissible Transverse Surgeries Which Are Not Local Legendrian Surgeries}
\label{ssec:admnotleg}
In the previous subsection, we saw that certain admissible transverse surgeries can be realized via local Legendrian surgeries.  Here, we show that this is not true for all admissible transverse surgeries, per Theorem \ref{excludedsurgery}. This theorem clearly follows from the two propositions below. The first identifies the sequence $\{a_k\}$ described in Theorem \ref{excludedsurgery}.

Throughout this section we assume that $K$ is a transverse knot in some contact manifold and $N$ is a standard neighborhood of $K$ such that the characteristic foliation on $\partial N$ is linear with slope $a$, where $n<a<n+1$ for some integer $n$. 

\begin{proposition}
\label{prop:admnotleg1}
Suppose $n<a<n+1$ for some integer $n$, and let $a_k=\frac{k(n+1)+n}{k+1}$ for any non-negative integer $k$. Then, for $a_k<a$, the contact manifold obtained from $S_a$ by admissible transverse $a_k$-surgery on its core $K$ is not the result of Legendrian surgery on any Legendrian link in $S_a$.
\end{proposition}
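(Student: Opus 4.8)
The plan is to argue by contradiction, using the classification of tight contact structures on solid tori together with the effect of Legendrian surgery on the dividing slope, as computed in Lemma~\ref{surgeryandslopes}. First I would set up the local picture as in Subsection~\ref{ssec:admleg}: identify $N$ with $S_a$, so that admissible transverse $a_k$-surgery on the core $K$ is obtained by performing a contact cut on $S_{a_k,a}$ along $T^2\times\{0\}$; this produces a solid torus $S$ whose meridian is a curve of slope $a_k$ on $\partial S = T_a$. By Lemma~\ref{lem:admutight}, $S$ is a standard neighborhood of the dual transverse curve $K'$, hence universally tight, and the dividing set on $\partial S$ consists of two curves which, measured in the original coordinates on $T_a$, have slope $a$. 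The key numerical point is the choice $a_k = \tfrac{k(n+1)+n}{k+1}$: curves of slope $a_k$ and of slope $n$ do \emph{not} form an integral basis for $H_1(T^2;\Z)$ (one checks $\det\left(\begin{smallmatrix} k(n+1)+n & n \\ k+1 & 1\end{smallmatrix}\right) = -k \neq 0$ for $k\ge 1$), but more relevantly, after changing to coordinates in which the meridian of $S$ is $(1,0)$, the dividing slope of $\partial S$ becomes a specific rational number $p/q$ with $q = k+1 > 1$; this is exactly the obstruction I will exploit.

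Next I would suppose, for contradiction, that the surgered manifold $S_a' := (S_a)_K(a_k)$ — which contains $S$ as a contact submanifold — is obtained from $S_a$ by Legendrian surgery (equivalently, a sequence of contact $(-1)$-surgeries, by Lemma~\ref{DG}) on some Legendrian link $\mathcal{L}\subset S_a$. One direction of control is easy: a Legendrian link in $S_a$ can, after perturbation, be assumed disjoint from a neighborhood of the core, hence contained in a collar $S_{b,a}$ with $n < b < a$; equivalently the link lies in the convex solid torus $S_{\overline{b}}$ with dividing slope $b$. I would then track the dividing slope of the outer boundary under the sequence of contact $(-1)$-surgeries. By Lemma~\ref{surgeryandslopes}, each contact $(-1)$-surgery on a Legendrian curve $L$ on a convex torus $T_{r/s}$ (of slope $r/s$, which must be a slope achieved in the current thickened torus) replaces the relevant boundary slope $p_0/q_0$ by $p_0'/q_0' = \frac{r^2 q_0 + (1 - rs)p_0}{(1+rs)q_0 - s^2 p_0}$, i.e. by the image of $p_0/q_0$ under the matrix $D_L^{+1} = \left(\begin{smallmatrix} 1 + rs & -s^2 \\ r^2 & 1 - rs\end{smallmatrix}\right)$. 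The upshot is that the solid torus produced by Legendrian surgery on $\mathcal{L}$ inside $S_a$ must, by the classification of tight contact structures on solid tori (Honda, Giroux), have a meridian and a dividing slope related to the original data of $S_a$ by a product of such $D_L^{\pm 1}$-type matrices — in particular its meridian slope, once it is normalized to $(1,0)$, forces the dividing slope to lie in a controlled $\mathrm{SL}_2(\Z)$-orbit. I would then show that slope $p/q$ with $q = k+1 \ge 2$ simply cannot arise this way: the minimal twisting of $S_a$ is the interval $(-\infty,a)$ in the circle of slopes, the Legendrian link lives in the sub-thickened-torus with twisting $(b,a)$, and the surgery formula never produces a solid torus whose (normalized) boundary has non-integral dividing slope unless the meridian was already non-primitive with respect to the relevant basis — which contradicts the fact that the meridian of the surgered solid torus $S$ is slope $a_k$, a primitive class.

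The cleanest way to phrase the contradiction, and the route I would actually take, is via the dual knot: admissible transverse $a_k$-surgery produces the transverse dual $K'$ with standard neighborhood $S$, and $M_K(a_k) - K' \cong M - S_{a_k}$. If this same manifold were the result of Legendrian surgery on $\mathcal{L}\subset S_a$, then the complement of the surgery solid torus would be $S_a$ minus a neighborhood of $\mathcal{L}$, and gluing in the surgery torus recovers a \emph{different} solid torus $S''$ whose meridian — being obtained from slopes appearing in $S_a$ by the integral transvections $D_L^{-1}$ of Lemma~\ref{surgeryandslopes} applied to the meridian $(0,1)$ of $S_a$ — lies in the $\mathrm{SL}_2(\Z)$-orbit of $(0,1)$ generated by those specific transvections. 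A direct computation (the same Farey-tessellation bookkeeping used in the proof of Theorem~\ref{SimpleSurg}) shows this orbit consists exactly of the slopes $\frac{m(n+1)+ (n)}{?}$ with denominator... more precisely, of integer-denominator-one slopes together with slopes reachable by positive Farey moves from $a$, and one checks $a_k$ is excluded because the corresponding meridian would have to be a Farey neighbor of $a$ achievable by \emph{negative} Dehn twists only, which drives the slope monotonically toward $n$ through the Farey neighbors $\frac{k(n+1)+n}{k+1}$ — but reaching \emph{exactly} $a_k$ as a meridian (rather than as a slope in the interior of the thickened torus) would require $a_k$ and the bounding slope to form an integral basis, and they do not unless $k = 0$.

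The main obstacle, and where I would spend the most care, is making the bookkeeping in the last paragraph rigorous: precisely identifying which meridian slopes for the surgered solid torus are realizable by a sequence of contact $(-1)$-surgeries on a link inside $S_a$, and showing $a_k$ is not among them. The clean statement is that these realizable meridians are exactly the images of $(0,1)$ under finite products of the matrices $D_L^{-1}$ for $L$ ranging over slopes in $(b,a)$, and one must verify (i) that this set is closed under the admissible moves in a way compatible with the minimal-twisting constraint, and (ii) that $a_k$, written as a fraction in lowest terms, has denominator $k+1$ and hence fails the integral-basis condition with $n$ that every such meridian must satisfy relative to the "innermost" slope reached. This is essentially a finite Farey-tessellation argument of the same flavor as — but more delicate than — the one in the proof of Theorem~\ref{SimpleSurg}, combined with the uniqueness part of the classification of tight contact structures on thickened tori and solid tori \cite{honda2, Giroux00}.
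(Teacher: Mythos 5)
There is a genuine gap here, and it sits exactly where you acknowledge spending the most care: the claim that any Legendrian link $\mathcal{L}\subset S_a$ whose surgery yields a solid torus must have its meridian lying in the orbit of $(0,1)$ under products of the transvections of Lemma~\ref{surgeryandslopes}. That lemma only computes the effect of contact $\pm1$-surgery on Legendrian curves that are leaves of the characteristic foliation on the pre-Lagrangian tori $T_{r/s}$; an arbitrary Legendrian link in $S_a$ can be topologically knotted inside the solid torus and is in no way isotopic to such curves, so its surgery effect is not described by any product of the matrices $D_L^{\pm1}$, and no amount of Farey bookkeeping on those matrices constrains it. Worse, no purely slope-theoretic argument can succeed: topologically, $a_k$-surgery on the core \emph{is} realizable by integral surgery on a link contained in $S_a$ (slam-dunk/continued-fraction expansion of $a_k$ along a chain of push-offs of the core), so the obstruction you need is genuinely contact-geometric and global, not a statement about which meridian slopes are reachable. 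There are also smaller slips: forming an integral basis requires determinant $\pm1$, not merely nonzero (your determinant is $\pm k$, so $a_1$ and $n$ \emph{do} form a basis), and your proposed contradiction explicitly excludes $k=0$, whereas the proposition includes the slope $a_0=n$.

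The paper's proof is much shorter and avoids all of this by going global: identify $S_a$ with a standard neighborhood of a transverse unknot in $(S^3,\xi_{std})$ so that $n=-1$; then $a_k=-1/(k+1)$ in these coordinates, and admissible transverse $a_k$-surgery on the core returns the standard tight $S^3$ (it is a union of two universally tight solid tori). If this surgery were Legendrian surgery on a nonempty link in $S_a\subset S^3$, attaching the corresponding Stein $2$-handles to $B^4$ would produce a Stein filling of $(S^3,\xi_{std})$ with nontrivial second homology, contradicting Gromov's theorem that $B^4$ is the unique Stein filling of the standard tight $S^3$. If you want to salvage a local approach, you would need a filling-type or classification-type obstruction of this kind rather than the slope bookkeeping of Lemma~\ref{surgeryandslopes}, which simply does not see general Legendrian links in $S_a$.
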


\begin{proof}
We can identify $S_a$ with a standard neighborhood of a transverse unknot in the tight contact structure on $S^3$ such that $n=-1$ when $S_a$ is framed by the Seifert framing on the unknot. Note that $a_k=\frac{-1}{k+1}$ with respect to this embedding. It follows that, for $a_k<a$, admissible transverse $a_k$-surgery on the core $K$ of $S_a$ recovers the tight contact structure on $S^3$. To see this, simply note that this surgery is the union of the universally tight solid torus $\overline{S^3-S_a}$ with the universally tight solid torus obtained from $S_a$ by this transverse surgery. Now, if this admissible transverse surgery could also be achieved via Legendrian surgery on some link in $S_a$, then, by attaching 2-handles to $B^4$ along this link, we would obtain a Stein filling of $(S^3,\xi_{std})$ with non-trivial second homology. But Gromov showed in \cite{Gromov85} that $B^4$ is the unique Stein filling of $(S^3,\xi_{std})$.
\end{proof}

The proposition below identifies the sequence $\{d_k\}$ described in Theorem \ref{excludedsurgery}.

\begin{proposition}
\label{prop:admnotleg2}
Suppose $n<a<n+1$ for some integer $n$, and let $d_k =\frac{(k+1)n+1}{k+1}$ for any positive integer $k$.  Then, for $d_k<a$ and $k\neq 3$, the contact manifold obtained from $S_a$ by admissible transverse $d_k$-surgery on its core $K$ is not the result of Legendrian surgery on any Legendrian link in $S_a$.
\end{proposition}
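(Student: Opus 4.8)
The proof should run parallel to that of Proposition~\ref{prop:admnotleg1}. First I would reduce to the case $n=-1$ by applying the longitude-changing diffeomorphism of $T^2\times[0,1]$, which shifts $a$, $d_k$ and $n$ all by the same integer, and then identify $S_a$ with a standard neighborhood of a transverse unknot $U$ in $(S^3,\xi_{\mathrm{std}})$ so that, with respect to the Seifert framing of $U$, the relevant integer is $-1$ --- exactly as in Proposition~\ref{prop:admnotleg1}. With this normalization $d_k=\tfrac{-k}{k+1}$, so admissible transverse $d_k$-surgery on $U$ produces a contact structure $\xi_k$ on the lens space $L(k,k-1)$ obtained by $\tfrac{-k}{k+1}$-surgery on the unknot. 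By Lemma~\ref{lem:admutight}, $\xi_k$ is built by gluing the universally tight solid torus $\overline{S^3\setminus S_a}$ to a universally tight solid torus, so it is itself universally tight --- in particular $(L(k,k-1),\xi_k)$ is Stein fillable --- and the core of the glued-in solid torus is the dual transverse curve $K'$, with $(L(k,k-1),\xi_k)\setminus K'$ contactomorphic to $(S^3,\xi_{\mathrm{std}})\setminus S_{d_k}$.

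Now suppose, toward a contradiction, that $\xi_k$ is obtained by Legendrian surgery on a Legendrian link $\mathbb{L}\subset S_a$. Attaching the corresponding $2$-handles to $B^4$ yields a Stein filling $W$ of $(L(k,k-1),\xi_k)$ with $b_1(W)=0$, $\pi_1(W)=1$, and $b_2(W)=|\mathbb{L}|\geq 1$; its intersection form is the linking matrix of $\mathbb{L}$, and the crucial extra constraint is that $\mathbb{L}$ lies inside a solid-torus neighborhood of the unknot in $S^3$, which rigidifies $W$ considerably (up to blow-up it is determined by the link in the solid torus). I see two ways to extract the contradiction. The first is to compare $W$ directly with the classification of symplectic fillings of the universally tight $(L(k,k-1),\xi_k)$ --- whose minimal filling is the negative-definite linear plumbing $A_{k-1}$ --- and show, using the rigidity just described, that no such $W$ exists when $k\neq 3$. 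The second, more in the spirit of Proposition~\ref{prop:admnotleg1}, is to pick a slope $a_j=\tfrac{-1}{j+1}$ from that proposition's list with $d_k<a_j$ and observe, via a Farey-type computation as in the proof of Lemma~\ref{surgeryandslopes}, that passing from the $d_k$-surgery to the $a_j$-surgery on $U$ is effected by an admissible transverse surgery on $K'$, at a slope which --- since a sufficiently small standard neighborhood of $K'$ in the glued-in solid torus has arbitrarily large boundary slope --- lies below the relevant integer floor and hence, by Theorem~\ref{SimpleSurg} (together with Theorem~\ref{thm:extraslopes} and Lemma~\ref{DG} for the exceptional slopes), is realized by Legendrian surgery on a link $\mathbb{L}'$ inside that small neighborhood of $K'$. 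Tracking $\mathbb{L}'$ back through the $\mathbb{L}$-surgery, one would conclude that $(S^3,\xi_{\mathrm{std}})$ is obtained by Legendrian surgery on $\mathbb{L}\cup\mathbb{L}'\subset S_a$, contradicting Proposition~\ref{prop:admnotleg1}.

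The hard part is the last step of either approach: pinning down exactly which fillings of $(L(k,k-1),\xi_k)$ can arise from a link confined to a solid-torus neighborhood of an unknot --- equivalently, arranging that all the auxiliary Legendrian knots stay inside $S_a$ while still producing a forbidden surgery. It is precisely here that $k\neq 3$ must enter, and the reason is transparent: when $k=3$ the slope $d_3=-3/4$ is itself one of the slopes produced by Theorem~\ref{thm:extraslopes}, so admissible transverse $d_3$-surgery on $U$ genuinely \emph{is} a Legendrian surgery on a link in $S_a$, and the proposition (correctly) claims nothing there. In the normalization $n=0$ one has $d_k=\tfrac{1}{k+1}$, and this equals a Theorem~\ref{thm:extraslopes} slope $\tfrac{j-1}{j^{2}}$ exactly when $(k+1)(k-3)$ is a perfect square, i.e. exactly when $k=3$ (with $j=2$); this arithmetic coincidence is what singles out the exceptional case, and I expect the same coincidence to be what governs when the filling obstruction in the first approach is nonvacuous.
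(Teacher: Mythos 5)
Your opening reduction is exactly the paper's: embed $S_a$ as a standard neighborhood of a transverse unknot so that $n=-1$, identify the result of admissible transverse $d_k$-surgery as a lens space with a universally tight structure, and turn a hypothetical Legendrian surgery link $\mathbb{L}\subset S_a$ into a Stein filling by attaching Weinstein handles to $B^4$. But the proof stops exactly where the real work begins, and both of your proposed continuations have concrete problems. In Approach A you misidentify the relevant filling classification: with the orientation the surgered manifold inherits (the one it has as boundary of the hypothetical Stein domain), $-k/(k+1)$-surgery on the unknot is, after expanding $-k/(k+1)=[-1,-(k+1)]$ and blowing down the $-1$, the same as $-k$-surgery on an unknot, i.e.\ the lens space bounding the Euler number $-k$ disk bundle. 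Its minimal symplectic fillings have $b_2=1$ (plus a rational homology ball exactly when $k=4$) by \cite{McDuff90,plavhm}; the $A_{k-1}$ plumbing with $b_2=k-1$ fills the reversed orientation (these differ for $k\geq 3$). This is not a harmless convention slip: the $b_2=1$ statement is what forces $\mathbb{L}$ to be a single knot, which is only the first half of the argument. The second, decisive half -- absent from your sketch -- is that the hypothetical surgery would exhibit a knot in the solid torus $S_a$ on which an integral surgery yields a solid torus, and such knots are classified by Berge and Gabai \cite{Berge91,Gabai89}; their numerical constraints ($q=B^2$, $p=bB+\delta A$, $0<2A\leq B$, $(A,B)=1$, $\delta=\pm1$) applied to the boundary slope $-k/(k+1)$ force $k+1=B^2$ and then $A=1$, $B=2$, so only $k=3$ survives. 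That is where $k\neq 3$ enters; it is not explained by the coincidence $d_3=1/4$ with a slope from Theorem~\ref{thm:extraslopes}, and your claim that for $k=3$ the surgery ``genuinely is'' a Legendrian surgery on a link in $S_a$ overstates what is known: that realization needs $S_a$ to thicken past slope $(2n+1)/2$, so it is only semi-local.

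Approach B also does not close. First, there need not be any forbidden slope $a_j$ of Proposition~\ref{prop:admnotleg1} strictly between $d_k$ and $a$: with $n=-1$ and $k\geq 2$, if $a\in(d_k,-1/2]$ then every $a_j$ with $j\geq 1$ satisfies $a_j\geq -1/2\geq a$, while $a_0=-1<d_k$. Second, the assertion that the connecting admissible transverse surgery on the dual knot $K'$ happens at a slope below the relevant integer floor is unsupported: the concentric linearly foliated tori around $K'$ are precisely the images of the tori $T_c$ with $c\in(d_k,a)$, and passing from the $d_k$-surgery to the $a_j$-surgery requires cutting along the specific torus $T_{a_j}$, whose foliation slope in the coordinates adapted to $K'$ is a fixed number determined by the change of basis -- it cannot be made arbitrarily negative by shrinking the neighborhood of $K'$, and deciding whether it is realizable by local Legendrian surgery (via Theorem~\ref{SimpleSurg} or otherwise) is a question of exactly the same kind as the proposition you are trying to prove. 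Finally, transporting the auxiliary link $\mathbb{L}'$ back into $S_a$ through the $\mathbb{L}$-surgery requires arranging that it avoid the surgery solid tori under the identifying contactomorphism, which is not automatic. In short, the setup is right, but the key idea of the paper's proof -- reduce to a single knot via the filling classification and then invoke the Berge--Gabai classification of solid-torus surgeries on knots in a solid torus -- is missing, and neither sketched route supplies a substitute.
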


\begin{proof}
As in the proof of Proposition \ref{prop:admnotleg1}, we identify $S_a$ with a standard neighborhood of a transverse unknot in the tight contact structure on $S^3$ such that $n=-1$ when $S_a$ is framed by the Seifert framing on the unknot. Note that $d_k=\frac{-k}{k+1}$ with respect to this embedding. Then, for $d_k<a$, admissible transverse $d_k$-surgery on the core $K$ of $S_a$ results in $L(k,1)$ with some contact structure. 

Now, if this admissible transverse surgery could be achieved via Legendrian surgery on some link $L\subset S_a$, then, by attaching 2-handles to $B^4$ along $L$, we would obtain a Stein manifold with boundary $L(k,1)$. Then there is a unique Stein filling of $L(k,1)$ if $k\not=4$ and its second homology has rank one \cite{McDuff90, plavhm}, which implies that the link $L$ is a knot. 
Moreover there are two Stein fillings of $L(4,1)$, one of which has second homology of rank one and the other is a rational homology ball \cite{McDuff90}, so once again the only possibility for $L$ is that it is a knot. 
In particular, $L$ is a knot in the solid torus $S_a$ on which some integral surgery yields another solid torus. 

Integral surgeries on knots $L$ in a solid torus $S$ which yield another solid torus $S'$ were classified in \cite{Berge91, Gabai89}.  According to this classification either (1) $L$ is the core of $S$ or (2) the meridian of $S'$ is a curve on $\partial S$ of slope $p/q$, where $q=B^2$ and $p=bB+\delta A$ for some integers $A,B,b,\delta$. Moreover, these integers must satisfy $0<2A\leq B$, $(A,B)=1$ and $\delta=\pm1$ (along with some other conditions; see \cite[Theorem 2.5]{Berge91}). 

Returning to our situation, case (1) is not possible since $L$ is supposed to be some knot in $S_a$ on which some integral surgery agrees with $\frac{-k}{k+1}$-surgery on the core of $S_a$; this would imply that $\frac{-k}{k+1}$ is an integer, which is not true for any positive integer $k$. In case (2), we see that $k+1 = B^2$ and $-k=bB+\delta A$, from which it follows that $-\delta A +1$ is divisible by $B$. This is only possible, given the restrictions on $A,B,b,\delta$, if $A=1$, $B=2$, $b=-2$ and $\delta = 1$. Therefore, case (2) is not possible as long as $k\neq 3$. This completes the proof.
\end{proof}
\begin{remark}
Note that if $S_a$ can be thickened to the solid torus $S_{(2n+1)/2}$, then one can achieve admissible transverse $d_3$-surgery on $K$ via Legendrian surgery on a leaf in the foliation of $\partial S_{(2n+1)/2}$. So the admissible transverse $d_3$-surgery can be achieved by Legendrian surgery on a ``semi-local" Legendrian knot. 
\end{remark}

\subsection{Uniform Thickness and Surgery}
\label{ssec:uniformthick}
In this section, we prove Theorem~\ref{thm:utgood}, which states that every admissible transverse surgery on a transverse knot in a uniformly thick knot type can be achieved via a sequence of Legendrian surgeries in a neighborhood of the transverse knot.
We start with a preliminary lemma that does not require uniform thickness, but just some thickening. 

\begin{lemma}\label{lem:thicken}
Let $K$ be a transverse knot in some contact manifold. Suppose $N$ is a standard neighborhood of $K$ such that the characteristic foliation on $\partial N$ is linear with slope $a$, where $n<a<n+1$ for some integer $n$. If $N$ thickens to a standard neighborhood $S$ of a Legendrian knot with Thurston-Bennequin invariant $n$, then surgery of slope less than $n$ is an admissible transverse surgery on $K$ and can be achieved by Legendrian surgery on some link in $S$. 
\end{lemma}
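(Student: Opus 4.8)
The plan is to deduce this almost immediately from Theorem~\ref{SimpleSurg}, with the hypothesis ``$N$ thickens to $S$'' entering only through the inclusion $N\subset S$.

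First I would set up the model: identify $N$ with the standard solid torus $S_a$ by a contactomorphism carrying $K$ to the core, using the longitude inherited from $S$ as the preferred longitude on $\partial N$. Since $n<a$, for every rational $s<n$ the solid torus $S_s\subset N\cong S_a$ has boundary with linear characteristic foliation of slope $s$; hence admissible transverse $s$-surgery on $K$ is defined, and the contact manifold $(M_K(s),\xi_K(s))$ it produces realizes the topological $s$-surgery on $K$. This is precisely the assertion that ``surgery of slope less than $n$ is an admissible transverse surgery on $K$.''

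Next I would apply Theorem~\ref{SimpleSurg} to $K$ with the neighborhood $N$: because $\partial N$ is linear of slope $a$ with $n<a<n+1$, for each rational $s<n$ the surgery $(M_K(s),\xi_K(s))$ is also obtained by Legendrian surgery on a Legendrian link contained in $N$. Since $N\subset S$, that link lies in $S$, which finishes the argument.

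I do not expect a serious geometric difficulty here: the lemma is essentially a repackaging of Theorem~\ref{SimpleSurg} in the thickened setting, and the thickening hypothesis is included because it is the form of the hypothesis that will be available when the knot type is uniformly thick (cf.\ the proof of Theorem~\ref{thm:utgood}), rather than because it is needed for the argument beyond the inclusion $N\subset S$. The one point to watch is the bookkeeping of framings: the slope of $\partial N$, the dividing slope of $\partial S$, and the surgery coefficients must all be measured with respect to the same (Seifert) longitude of the knot type, so that the integer $n$ occurring in ``Thurston--Bennequin invariant $n$,'' in ``$n<a<n+1$,'' and in ``slope less than $n$'' is genuinely one and the same; the hypotheses guarantee this, and it is what makes the application of Theorem~\ref{SimpleSurg} with this $n$ legitimate.
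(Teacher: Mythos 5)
Your reduction proves only the statement as it is literally printed, and in doing so it misses the content the lemma is actually meant to carry. Notice that the printed hypotheses are in fact incompatible: a standard neighborhood $S$ of a Legendrian knot with $\tb=n$ is a minimally twisting tight solid torus whose boundary-parallel, linearly foliated tori realize exactly the slopes in $(-\infty,n)$, so no torus $\partial N$ of slope $a\in(n,n+1)$ can sit inside $S$ parallel to $\partial S$. The version of the lemma that is actually used -- see the proof of Theorem~\ref{thm:utgood} -- has the inequality the other way: there $n-1\le a<n$ while $\tb(L')=n$, i.e.\ the Legendrian neighborhood is a genuine thickening past the slope of $\partial N$. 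For that statement your argument has a real gap: applying Theorem~\ref{SimpleSurg} to $N$ only yields the surgeries of slope less than $\lfloor a\rfloor=n-1$, whereas the lemma claims all slopes less than $n$; worse, the admissibility of the slopes in $[a,n)$ is not visible from $N$ at all, since $N\cong S_a$ only contains the tori $T_r$ with $r<a$. Both of these are exactly what the thickening hypothesis is for, so your closing remark that the thickening ``is not needed for the argument beyond the inclusion $N\subset S$'' is precisely where the proposal goes wrong.

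The paper's proof therefore works inside $S$ rather than inside $N$. Using the model of a Legendrian standard neighborhood, one finds for every $r<n=\tb(L')$ a standard neighborhood of $K$ contained in $S$ contactomorphic to $S_r$; this gives admissibility of every slope $r<n$, including those in $[a,n)$. Admissible transverse $r$-surgery then replaces $S$ by a universally tight solid torus whose meridian has slope $r$ on $\partial S$, and by Ding--Geiges (Lemma~\ref{DG}, or the analysis in the proof of Theorem~\ref{SimpleSurg}) Legendrian surgery on suitably stabilized Legendrian push-offs of $L'$ inside $S$ produces a solid torus with the same boundary data realizing, for appropriate choices of stabilizations, any tight contact structure with that boundary -- in particular the universally tight one -- so the classification of tight contact structures on solid tori identifies the two results. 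Your approach does correctly handle the slopes below $\lfloor a\rfloor$ (and would be a fine, if trivial, proof were the hypotheses really $n<a<n+1$), but the slopes in $[\lfloor a\rfloor,n)$ genuinely require the argument carried out in $S$.
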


\begin{proof}
From the models for the standard neighborhoods of Legendrian and transverse knots discussed in Section \ref{sec:prelim}, note that, for any $r<n$, we can find a standard neighborhood of $K$ contained in $S$ that is contactomorphic to $S_r$. Thus, any surgery on $K$ of slope less than $n$ is admissible. 

It is enough to show that the contact manifold obtained from $S$ via admissible transverse $r$-surgery on $K$ can also achieved by Legendrian surgery on a link in $S$. The former surgery results in a universally tight solid torus $S'$ whose meridian is a curve of slope $r$ on $\partial S' = \partial S$. According to Ding and Geiges \cite{DG2} (or the analysis performed in the proof of Theorem~\ref{SimpleSurg}), one can find a Legendrian link $\mathbb{L}$ in $S$ consisting of stabilizations of Legendrian pushoffs of $L'$ such that the manifold obtained from $S$ via Legendrian surgery on $\mathbb{L}$ is a solid torus $S''$ whose meridian is a curve of slope $r$ on $\partial S'' = \partial S$ for any $r<\tb(L')=n$. Moreover, by choosing the stabilizations for these pushoffs appropriately, we can achieve any tight contact structure on $S''$ with the given dividing set on the boundary. It follows that there is some Legendrian link $\mathbb{L}\subset S$ on which Legendrian surgery yields a solid torus $S''$ contactomorphic to $S'$, completing the proof.
\end{proof}

\begin{proof}[Proof of Theorem~\ref{thm:utgood}]
Suppose $K$ is a transverse knot in $(M,\xi)$ belonging to a uniformly thick knot type $\mathcal{K}$. Let $S_a$ be any standard neighborhood of $K$. It is enough to show that any admissible transverse $r$-surgery for $r<a$ can be achieved via Legendrian surgery in some neighborhood of $K$.  

The universal thickness hypothesis implies that $S_a$ is contained in a standard neighborhood $N$ of a Legendrian knot $L$ in the knot type $\mathcal{K}$, where $L$ has maximal Thurston-Bennequin invariant among all Legendrian representatives of $\mathcal{K}$. Let $n-1$ be the greatest integer less than or equal to $a$. So $n-1\leq a<n$. By the classification of tight contact structures on thickened tori \cite{Giroux00, honda2}, there is a torus $T\subset N-S_a$ which is convex and has two dividing curves of slope $n$. Let $S$ be the solid torus in $N$ bounded by $T$. By Theorem~\ref{Kanda}, $S$ is a standard neighborhood of a Legendrian knot $L'$ with $\tb(L')=n$. The theorem now follows from Lemma~\ref{lem:thicken}.
\end{proof}

\begin{proof}[Proof of Corollary \ref{cor:uthickt}]
This corollary follows immediately from the proof of Theorem \ref{thm:utgood} and the observation that $r$-surgery on $K$ is admissible for any $r<\tb(L)$. Indeed, if $L$ is the Legendrian representative of $K$ with maximal Thurston-Bennequin invariant, and $N$ is its standard neighborhood, then there is a standard neighborhood $S_a$ of $K$ embedded in $N$ for any $a<\tb(L)$.
\end{proof}

\section{Our examples}
\label{sec:examples}
In this section, we describe the open books which support the contact manifolds in our examples. First, we review some notions related to diffeomorphisms of surfaces.

Suppose $S$ is a compact, orientable surface with boundary and that $\phi$ is a diffeomorphism of $S$ which restricts to the identity on $\partial S$. In a slight abuse of notation, we call $\phi$ pseudo-Anosov if it is freely isotopic to a homeomorphism $\phi_0$ of $S$ which is pseudo-Anosov in the conventional sense: {\em i.e.}\/, there exist two singular measured foliations of $S$, $(\mathcal{F}_s,\mu_s)$ and $(\mathcal{F}_u,\mu_u)$, which are transverse, such that $\phi_0(\mathcal{F}_s,\mu_s)=(\mathcal{F}_s,\lambda\mu_s)$ and $\phi_0(\mathcal{F}_u,\mu_u)=(\mathcal{F}_u,\lambda^{-1}\mu_u)$ for some $\lambda>1$ \cite{th2}. 

The \emph{fractional Dehn twist coefficient} (FDTC) of $\phi$ around a boundary component $B$ of $S$ is defined as follows. Let $x_0,\dots, x_{n-1}$ be the attracting fixed points of $\phi_0$ on $B$, labeled in order as one traverses $B$ in the direction specified by its orientation.
Since $\phi_0$ permutes the points $\{x_i\}$, there exists an integer $k$ such that $\phi_0$ sends $x_i$ to $x_{i+k}$ for all $i$ modulo $n$. If $H:S\times [0,1]\rightarrow S$ is the free isotopy from $\phi$ to $\phi_0$, and $\beta:B\times [0,1]\rightarrow B\times [0,1]$ is the map defined by 
\[
\beta(x,t)=(H(x,t),t),
\] 
then $\beta(x_i \times [0,1])$ is an arc from $(x_i,0)$ to $(x_{i+k},1)$. The FDTC of $\phi$ around $B$ is the fraction $c \in \mathbb{Q}$, where $c\equiv k/n \text{ modulo }1$ is the number of times the arc $\beta(x_i\times [0,1])$ wraps around the cylinder $B\times [0,1]$.

A related notion is that of being \emph{right-veering}. Suppose $\alpha:[0,1]\rightarrow S$ is a properly embedded arc. Let $\beta$ be another such arc with $\beta(0) = \alpha(0)$. We write $\alpha \geq \beta$ if either $\alpha \sim \beta$ or if, after isotoping $\beta$ (while fixing its endpoints) so that it intersects $\alpha$ efficiently, $(\dot\beta(0),\dot\alpha(0))$ defines the orientation of $S$ at $\alpha(0) = \beta(0)$. The monodromy $\phi$ is said to be right-veering if $\alpha \geq \phi(\alpha)$ for all such $\alpha$. Honda, Kazez and Mati{\'c} prove in \cite{hkm1} that a contact structure is tight if and only if all of its supporting open books are right-veering. They also establish the following relationship.

\begin{proposition}[Honda Kazez and Mati\'c 2007, \cite{hkm1}]
A pseudo-Anosov map is right-veering if and only if its FDTCs are all positive.
\end{proposition}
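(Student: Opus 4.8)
The statement to prove is: a pseudo-Anosov map $\phi$ (in the loose sense, meaning freely isotopic to an honest pseudo-Anosov $\phi_0$) is right-veering if and only if all of its fractional Dehn twist coefficients are positive. My plan is to argue one direction at a time, working boundary component by boundary component and exploiting the structure of $\phi_0$ near its attracting/repelling fixed points on each boundary circle.

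For the direction ``all FDTCs positive $\Rightarrow$ right-veering'', I would localize the failure of right-veering to the boundary. Suppose $\phi$ is not right-veering, so there is a properly embedded arc $\alpha$ with $\phi(\alpha)$ strictly to the left of $\alpha$ at the common endpoint $p\in B$ for some boundary component $B$. The key point is that whether $\phi(\alpha)$ veers left or right at $p$ is governed entirely by the behavior of the free isotopy $H$ near $p$ together with the dynamics of $\phi_0$ on a neighborhood of $B$: pushing the endpoint around $B$ via the isotopy, and then following $\phi_0$, which near $B$ looks like a product of a rotation permuting the fixed points $x_0,\dots,x_{n-1}$ with a ``north-south''-type flow in the transverse direction. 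If the FDTC $c$ around $B$ is positive (equivalently, the arcs $\beta(x_i\times[0,1])$ wrap forward, in the direction of the boundary orientation), then an arc emanating from $B$ gets dragged forward by the isotopy and then, under $\phi_0$, is attracted toward the nearest attracting fixed point on the forward side — this forces $\phi(\alpha)\ge\alpha$ at $p$. So I would set up the standard boundary model $B\times[0,\varepsilon)$, write $\phi_0$ on it explicitly in coordinates, compose with the isotopy $\beta$, and read off that positive $c$ forces the image arc to lie (weakly) to the right. This handles all boundary-adjacent arcs; for arcs whose image could conceivably veer left in the interior, one uses the fact (from Honda--Kazez--Mati\'c) that right-veering is detected entirely at the boundary, or equivalently one isotopes $\phi(\alpha)$ to be efficient and notes that the sign of intersection at $p$ is all that matters.

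For the converse, ``right-veering $\Rightarrow$ all FDTCs positive'', I would argue the contrapositive: if some FDTC $c$ is non-positive, produce an arc violating right-veering. If $c<0$, the isotopy $\beta$ drags boundary-adjacent arcs \emph{backward} around $B$, and then $\phi_0$'s attraction to fixed points on the backward side forces $\phi(\alpha)$ to veer left for a suitable $\alpha$ based at $B$ — giving $\alpha\not\ge\phi(\alpha)$. The borderline case $c=0$ requires slightly more care: here the isotopy contributes no net wrapping, and one must use the pseudo-Anosov dynamics directly. Because $\phi_0$ is genuinely pseudo-Anosov, the stable and unstable foliations have prongs at $B$; choosing $\alpha$ to run roughly along an unstable prong (which is expanded and ``rotated'' toward a stable separatrix under iteration — or for a single application, using that the first-return dynamics near $B$ still nontrivially shears the transverse direction), one gets $\phi(\alpha)$ strictly left of $\alpha$. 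Concretely, $c=0$ with $\phi_0$ pseudo-Anosov means there is a fixed point on $B$ but the nearby dynamics still has a hyperbolic (not identity) character, and a short arc aligned with the repelling direction will be pushed to the left.

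I expect the main obstacle to be the $c=0$ case and, more generally, making the ``localize to the boundary'' heuristic rigorous — i.e., justifying that the veering direction of $\phi(\alpha)$ at an endpoint $p\in B$ depends only on the germ of the free isotopy at $p$ and the germ of $\phi_0$ along $B$, independently of global choices. One clean way around this is to invoke the characterization of right-veering in terms of the arc $\beta(x_i\times[0,1])$ and its wrapping number already built into the definition of the FDTC: the same arc that defines $c$ essentially \emph{is} the obstruction arc, so the computation of the sign of $c$ and the check of right-veering at $B$ become the same computation. I would structure the write-up around that observation to avoid redoing the local analysis twice, invoking \cite{hkm1} for the boundary-detection of right-veering and for the normal form of $\phi_0$ near $\partial S$.
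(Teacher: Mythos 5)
The paper does not prove this proposition at all: it is quoted verbatim from \cite{hkm1}, so there is no internal argument to compare against, and your sketch has to stand on its own. In spirit it is close to what Honda, Kazez and Mati\'c actually do (analyze the boundary-fixing representative against the pseudo-Anosov $\phi_0$ near $\partial S$, with the winding of the free isotopy supplying the sign), but as written it has a genuine gap at exactly the step you flag. The relation $\alpha\geq \phi(\alpha)$ is defined only after isotoping $\phi(\alpha)$ rel endpoints to intersect $\alpha$ efficiently; the initial tangent direction of $\phi(\alpha)$ at the basepoint --- which is what your ``germ of the isotopy plus germ of $\phi_0$ along $B$'' argument controls --- is not isotopy-invariant and does not by itself determine the veering. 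The rigorous formulation, and the one \cite{hkm1} use, is to lift to the universal cover (or a suitable cover), extend $\phi_0$ and the lifted isotopy to the compactification, and compare arcs by the cyclic order of their lifted endpoints; the FDTC then appears as a rotation amount of the lifted map along the boundary line, and all three cases $c>0$, $c<0$, $c=0$ are read off uniformly. Your proposed shortcut --- ``the arc $\beta(x_i\times[0,1])$ that defines $c$ essentially is the obstruction arc'' --- does not repair this: that arc lives in the collar $B\times[0,1]$, is not an essential properly embedded arc of $S$, and ``right-veering is detected at the boundary'' is not a citable lemma that lets you bypass the efficient-position issue; invoked this way it comes close to assuming the statement being proved.

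Two smaller points. In the case $c=0$ the correct mechanism is not a single arc ``aligned with the repelling direction being pushed left'': with zero twisting the attracting and repelling fixed points of $\phi_0$ alternate around $B$, and arcs based on one side of a fixed point veer left while those on the other side veer right, so $\phi$ is neither right- nor left-veering (this stronger dichotomy is what the paper actually uses in Lemma 4.2, applying the proposition to both $\psi$ and $\psi^{-1}$). And in the direction $c>0\Rightarrow$ right-veering you must also rule out interior obstructions for arcs whose images wrap many times; this again is handled automatically by the universal-cover comparison but not by a purely local computation at the basepoint. If you rewrite the argument around lifts and the circular order at infinity, the rest of your outline goes through.
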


Henceforth, $T$ shall denote the genus one surface with two boundary components, $B_1$ and $B_2$. Let $\psi$ be the diffeomorphism of $T$ given by the product of Dehn twists, 
\[
\psi = D_aD_b^{-1}D_cD_{d}^{-1},
\] 
where $a,b,c$ and $d$ are the curves shown in Figure \ref{fig:OB}. Then $\psi$ is pseudo-Anosov by a well-known construction of Penner \cite{penner}. (Penner showed that if $\mathcal{S}^+\cup \mathcal{S}^-$ is a collection of curves which fills a surface such that the curves in each of $\mathcal{S}^+$ and $\mathcal{S}^-$ are pairwise disjoint, then any factorization consisting of positive Dehn twists along the curves in $\mathcal{S}^+$ and negative Dehn twists along those in $\mathcal{S}^-$ is pseudo-Anosov as long as this factorization contains at least one Dehn twist along every curve in the collection.)

We define \[\psi_{n,k_1,k_2} = D_{\delta_1}^{k_1} D_{\delta_2}^{k_2} \cdot\psi^n,\] where $\delta_1$ and $\delta_2$ are curves parallel to the boundary components $B_1$ and $B_2$ of $T$. 

\begin{figure}[!htbp]

\labellist 
\hair 2pt 
\small
\pinlabel $d$ at -1 150
\pinlabel $a$ at 134 54
\pinlabel $b$ at 154 100
\pinlabel $c$ at 134 155
\pinlabel $\delta_1$ at -8 198
\pinlabel $\delta_2$ at 253 198

\pinlabel $\alpha_2$ at 415 155
\pinlabel $\alpha_1$ at 352 147
\tiny
\pinlabel $180^{\circ}$ at 149 14
\endlabellist 

\begin{center}
\includegraphics[width=10cm]{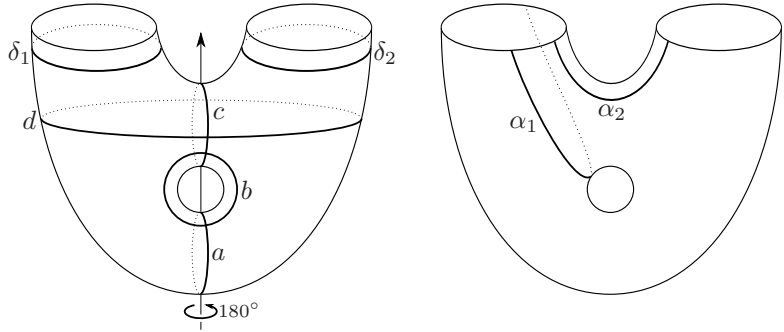}
\caption{\quad The surface $T$. The involution $\iota$ is a $180^{\circ}$ rotation about the axis shown on the left.}
\label{fig:OB}
\end{center}
\end{figure}

\begin{lemma}\label{lem:fdtc=n}
The FDTC of $\psi_{n,k_1,k_2}$ around $B_i$ is $k_i$.
\end{lemma}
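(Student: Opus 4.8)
The plan is to reduce the computation of the FDTC of $\psi_{n,k_1,k_2}$ around each boundary component to two facts: first, that the FDTC is additive under composition with boundary-parallel Dehn twists, and second, that the FDTC of $\psi$ itself around $B_i$ is zero. Recall that adding a single positive Dehn twist $D_{\delta_i}$ about a curve parallel to $B_i$ changes the free isotopy to the pseudo-Anosov representative by exactly one full twist around $B_i$ and leaves the behavior near $B_{3-i}$ unchanged; hence the FDTC around $B_i$ increases by $1$ and the FDTC around $B_{3-i}$ is unaffected. Iterating, the FDTC of $D_{\delta_1}^{k_1}D_{\delta_2}^{k_2}\cdot\psi^n$ around $B_i$ equals $k_i$ plus the FDTC of $\psi^n$ around $B_i$, which in turn equals $n$ times the FDTC of $\psi$ around $B_i$ (the free isotopy to $\phi_0^n$ can be taken to be the $n$-fold concatenation of the free isotopy from $\psi$ to $\phi_0$, so the wrapping numbers add). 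So it suffices to show the FDTC of $\psi$ around each $B_i$ is $0$.

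To see that the FDTC of $\psi$ around $B_i$ vanishes, I would argue that $\psi$ is itself the pseudo-Anosov representative in its own free isotopy class in the appropriate sense, or more precisely that one can choose the free isotopy $H$ to be constant near $\partial T$. The curves $a,b,c,d$ of Figure~\ref{fig:OB} lie in the interior of $T$, so each Dehn twist $D_a, D_b^{-1}, D_c, D_d^{-1}$ is supported away from $\partial T$; thus $\psi$ is the identity on a collar of $\partial T$. Penner's construction produces a pseudo-Anosov homeomorphism $\phi_0$ freely isotopic to $\psi$, and for a fixed-boundary map that is already supported in the interior, the free isotopy can be arranged to fix $\partial T$ pointwise throughout — equivalently, the attracting fixed points $x_i$ on $B_i$ are genuinely fixed (not merely permuted) by $\psi$, and the tracking arcs $\beta(x_i\times[0,1])$ can be taken to be constant arcs. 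Hence the wrapping number, and thus the FDTC, is $0$.

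The cleanest way to package the last point is to invoke the known behavior of the FDTC under the operations being performed rather than re-deriving it: the FDTC is a homogeneous quasimorphism-like quantity on the relevant mapping class monoid in the sense that it is additive on boundary Dehn twists and homogeneous under powers, and a map supported in the interior of the surface that is freely isotopic to a pseudo-Anosov has FDTC zero around every boundary component precisely because no twisting is needed in the free isotopy. Combining these: $c(\psi_{n,k_1,k_2}, B_i) = k_i + n\cdot c(\psi,B_i) = k_i + n\cdot 0 = k_i$.

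The main obstacle I expect is the bookkeeping around the definition of FDTC when $\phi_0$ merely permutes the attracting fixed points on a boundary circle rather than fixing them; one must be careful that the "$\equiv k/n \bmod 1$" ambiguity in the definition is pinned down correctly by the actual free isotopy $H$, and that the free isotopy from $\psi^n$ to $\phi_0^n$ really is (isotopic to) the $n$-fold concatenation so the wrapping numbers genuinely add. Once one fixes conventions so that "FDTC is additive under boundary twists and linear under powers" is a clean lemma — which is standard from \cite{hkm1} — the proof is short. I would therefore spend most of the write-up making the additivity and homogeneity statements precise and noting that $\psi$, being supported in the interior, contributes nothing, rather than on any computation with the specific curves $a,b,c,d$.
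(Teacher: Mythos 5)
Your reductions are fine and match what the paper does implicitly: the FDTC is additive under composition with boundary-parallel Dehn twists, and the FDTC of $\psi^n$ is $n$ times that of $\psi$, so everything comes down to showing the FDTC of $\psi$ around each $B_i$ is $0$. But your argument for that last, essential point is wrong. You claim that because the twisting curves $a,b,c,d$ lie in the interior, $\psi$ is the identity on a collar of $\partial T$ and therefore the free isotopy to the pseudo-Anosov representative $\phi_0$ can be taken to fix $\partial T$ pointwise, so the wrapping number is $0$. This cannot work: \emph{every} mapping class of $T$ rel boundary has a representative supported in the interior --- in particular the boundary-parallel twists $D_{\delta_i}$ are supported in annuli disjoint from $\partial T$, so $\psi_{n,k_1,k_2}$ itself is ``supported in the interior'' in exactly the same sense, yet by the very lemma you are proving its FDTCs are $k_i\neq 0$. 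The FDTC is an invariant of the mapping class rel boundary, not of the support of a chosen representative. Moreover the free isotopy $H$ can never fix $\partial T$ pointwise: $\phi_0$ restricted to a boundary circle has finitely many fixed points coming from the prongs of the invariant foliations (it is not the identity there), so $H$ necessarily moves boundary points, and how much it twists is precisely the quantity in question; nothing about the support of $\psi$ determines it.

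So the core computation $c(\psi,B_i)=0$ is missing. The paper obtains it by a different, genuinely two-sided argument: the $180^{\circ}$ rotation $\iota$ exchanges $B_1$ and $B_2$ and commutes with $\psi$, so the two FDTCs of $\psi$ are equal; then the explicit arcs $\alpha_1,\alpha_2$ of Figure \ref{fig:OB} show $\psi(\alpha_1)\geq\alpha_1$ (so $\psi$ is not right-veering, forcing the common FDTC $\leq 0$ by the Honda--Kazez--Mati\'c criterion) and $\alpha_2\geq\psi(\alpha_2)$ (so $\psi^{-1}$ is not right-veering, forcing the FDTC $\geq 0$). Some argument of this kind --- or a direct train-track/invariant-foliation computation of the boundary dynamics --- is required; an appeal to ``interior support'' cannot replace it.
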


\begin{proof}
Consider the rotation $\iota$ of $T$ by $180^{\circ}$ around the axis shown in Figure \ref{fig:OB}. Since $\iota$ exchanges $B_1$ and $B_2$ and commutes with $\psi$, the two FDTCs of $\psi$ are equal. Consider the arcs $\alpha_1$ and $\alpha_2$ shown in Figure \ref{fig:OB}. It is clear that $\psi(\alpha_1)\geq \alpha_1$; therefore, $\psi$ is not right-veering and its FDTCs are less than or equal to $0$. On the other hand, $\alpha_2\geq \psi(\alpha_2)$, which implies that $\psi^{-1}$ is not right-veering. The FDTCs of $\psi^{-1}$ are therefore less than or equal to $0$, which implies that those of $\psi$ are greater than or equal to $0$. Thus, the FDTCs of $\psi$ are $0$. The same is therefore true of $\psi^n$. The lemma follows.
 \end{proof}
 
We shall denote by $(M_{n,k_1,k_2},\xi_{n,k_1,k_2})$ the contact manifold supported by the open book $(T,\psi_{n,k_1,k_2})$. Then, we have the following result.

\begin{proposition}\label{prop:utight}
$(M_{n,k_1,k_2},\xi_{n,k_1,k_2})$ is universally tight if $k_1,k_2\geq 2$. \qed
\end{proposition}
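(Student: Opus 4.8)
\textbf{Proof proposal for Proposition \ref{prop:utight}.}

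The plan is to invoke the criterion of Colin and Honda from \cite{ch} (and Honda--Kazez--Mati\'c in \cite{hkm1,hkm2}) that relates universal tightness of a contact structure supported by a pseudo-Anosov open book to the sizes of its fractional Dehn twist coefficients. First I would observe, via Penner's construction \cite{penner} applied to the filling collection $\{a,c\}\cup\{b,d\}$, that $\psi$ is freely isotopic to a pseudo-Anosov homeomorphism of $T$; consequently so is each $\psi_{n,k_1,k_2}=D_{\delta_1}^{k_1}D_{\delta_2}^{k_2}\cdot\psi^n$, since multiplying by Dehn twists along boundary-parallel curves does not change the free isotopy class of the underlying map on the interior. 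By Lemma \ref{lem:fdtc=n}, the FDTC of $\psi_{n,k_1,k_2}$ around $B_i$ equals $k_i$. Hence, when $k_1,k_2\ge 2$, both fractional Dehn twist coefficients are at least $2$.

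The second step is to apply the theorem of Colin and Honda \cite{ch} which states that an open book whose monodromy is freely isotopic to a pseudo-Anosov map and whose FDTC around every boundary component is at least $1$ (in fact, having all FDTCs $\ge 1$ suffices; see also the refinements requiring $\ge 2$ in the disconnected-binding setting) supports a universally tight contact structure. Applying this to $(T,\psi_{n,k_1,k_2})$ with $k_1,k_2\ge 2$ yields immediately that $(M_{n,k_1,k_2},\xi_{n,k_1,k_2})$ is universally tight. In particular the FDTCs being strictly bigger than $1$ puts us comfortably inside the hypotheses, so no delicate boundary cases arise.

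The main obstacle is making sure the cited result of Colin--Honda is being applied in precisely the right form: the original statement concerns contact structures on manifolds built from open books with pseudo-Anosov monodromy, and one must check that the genus-one, two-boundary-component setting with the prescribed amount of boundary twisting falls within its scope (as opposed to, say, the connected-binding hypothesis of \cite{hkm2}). This is why the bound $k_i\ge 2$ appears rather than $k_i\ge 1$: it guarantees enough twisting around each binding component that the relevant incompressible tori in the complement of the binding are, after an isotopy, in a controlled position relative to the characteristic foliation, so that the Colin--Honda gluing/tightness argument goes through for each finite cover. Once the applicability of \cite{ch} is confirmed, the proof is a direct citation; the remaining content is entirely packaged in Lemma \ref{lem:fdtc=n} and Penner's criterion, both already in hand. (Since the statement is marked \qed in the excerpt, I expect the authors in fact defer the detailed verification to Section \ref{sec:examples} proper or simply cite \cite{ch}; my proposal mirrors that structure.)
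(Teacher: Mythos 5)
Your proposal is correct and follows essentially the same route as the paper: $\psi$ is pseudo-Anosov by Penner's criterion, Lemma \ref{lem:fdtc=n} gives FDTC $=k_i$, and the statement is then an immediate citation of the Colin--Honda result (Theorem \ref{thm:utightfdtc}). One correction, though: the hypothesis of that theorem is not ``all FDTCs $\geq 1$'' but FDTC around $B_i$ at least $2/n_i$, where $n_i$ is the number of attracting fixed points of the pseudo-Anosov representative on $B_i$; the reason $k_i\geq 2$ suffices is simply that $2/n_i\leq 2$ for any $n_i\geq 1$, so no computation of fixed points is needed (the paper remarks, via the train track, that in fact $n_i=1$ here, so the threshold really is $2$ and your suggested $\geq 1$ version would not cover these examples). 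Your explanation of the role of $k_i\geq 2$ in terms of positioning incompressible tori is not the actual mechanism and should be replaced by this numerical point.
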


This proposition follows from the more general result of Colin and Honda below.

\begin{theorem}[Colin and Honda 2008, \cite{ch}]
\label{thm:utightfdtc}
Suppose $\phi$ diffeomorphism of a surface $S$ with boundary components $B_1,\dots,B_k$ which is the identity on the boundary and is freely isotopic to a pseudo-Anosov diffeomorphism $\phi_0$, and that $\phi_0$ has $n_i$ attracting fixed points on the boundary $B_i$, for $i=1,\dots,k$.  If the FDTC of $\phi$ around $B_i$ is at least $2/n_i$ for $i=1,\dots,k$, then the contact manifold supported by $(S,\phi)$ is universally tight and the result of Legendrian surgery on any Legendrian link in this contact manifold is also tight.
\end{theorem}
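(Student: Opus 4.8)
## Proof proposal for Theorem~\ref{thm:utightfdtc} (Colin--Honda)

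The plan is to deduce this from Colin and Honda's structural analysis of pseudo-Anosov supported contact structures in \cite{ch}, so the bulk of the argument is to recall the right pieces of their machinery and assemble them. The overall strategy is: (1) use the hypothesis on the fractional Dehn twist coefficients to produce, inside $(S,\phi)$, a convenient family of incompressible convex tori near each binding component with large slope, so that the ``outer'' region near each $B_i$ looks like a standard neighborhood of a transverse curve with high self-linking; (2) reduce universal tightness of $\xi=\xi_{(S,\phi)}$ to tightness of the universal cover by a covering-space argument, invoking the fact that the free isotopy to $\phi_0$ lets one understand the cover explicitly via the suspension flow of $\phi_0$; and (3) observe that tightness persists under Legendrian surgery on any link because Legendrian surgery never decreases the relevant ``amount of twisting'' detected by the Colin--Honda criterion, so the surgered manifold still satisfies the hypotheses of their theorem (or, more directly, embeds the surgery into the same universally tight picture).

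Here is the order I would carry the steps out. First I would normalize: since $\phi$ is freely isotopic to $\phi_0$ pseudo-Anosov with $n_i$ attracting (and hence $n_i$ repelling) fixed points on $B_i$, the FDTC of $\phi$ around $B_i$ being at least $2/n_i$ means that the monodromy wraps ``more than the fractional step'' around each binding. Concretely this is what lets one find, after an isotopy of the page, a product neighborhood $A_i\times S^1$ of $B_i$ on which $\phi$ restricts to a composition of at least one full Dehn twist parallel to $B_i$ times a near-identity map; in open-book terms this realizes a neighborhood of each binding component as a standard neighborhood of a transverse knot whose slope (with respect to the page framing) is controlled by the FDTC. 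Second, I would use the Colin--Honda classification/construction: they show that for a genuine pseudo-Anosov monodromy one can isotope so that the contact structure is built by attaching an $I$-invariant universally tight piece along the suspension, and that the whole manifold is obtained by gluing universally tight thickened tori and solid tori with compatible slopes; the hypothesis $\text{FDTC}\ge 2/n_i$ is precisely the numerical condition guaranteeing these slopes line up so that no overtwisted disk or Giroux torsion domain can appear, even after passing to any finite or infinite cover. This gives universal tightness. Third, for the Legendrian surgery statement, I would note that Legendrian surgery on a link $\LL$ in $(M,\xi)$ removes standard neighborhoods of the components of $\LL$ and reglues universally tight solid tori; by the analysis in Subsection~\ref{ssec:admleg} (cf. the proof of Lemma~\ref{surgeryandslopes}), this operation only makes thickened-torus pieces \emph{less} twisted and reglues universally tight pieces, so the surgered contact manifold admits a decomposition of exactly the same type as the one Colin--Honda use to certify tightness. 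Hence it is tight.

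The main obstacle will be Step~2: making the passage to the universal cover rigorous. One has to control the contact structure on the infinite cyclic (and then full universal) cover of $M_{(S,\phi)}$, and the clean way to do this is via Colin--Honda's model in which the pseudo-Anosov suspension flow is an almost-Reeb-like flow whose lift to the universal cover is still taut/tight in the appropriate sense; showing that the FDTC bound survives the lift — i.e.\ that the covered open book still has monodromy with ``enough twisting'' — requires an honest argument about how fractional Dehn twist coefficients behave under the relevant covers, which is the technical heart of \cite{ch}. A secondary subtlety in Step~3 is that Legendrian surgery could in principle be performed on a knot that is \emph{not} contained in one of the convenient pieces of the decomposition; to handle this one first isotopes the Legendrian link to be Legendrian-transverse to the convex tori of the decomposition (Giroux flexibility plus a general position/edge-rounding argument), so that the surgery is supported inside finitely many pieces and the gluing slopes can be tracked as above. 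Since all of this is exactly the content of \cite{ch}, for the purposes of this paper it suffices to cite their theorem; I would therefore present Steps~1 and~3 in a few lines and simply attribute the universal-tightness core to Colin and Honda.
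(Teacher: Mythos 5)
Your Step 3 is where the argument genuinely fails. You claim that Legendrian surgery on an arbitrary link just removes standard neighborhoods and reglues universally tight solid tori, so the surgered manifold ``admits a decomposition of exactly the same type'' and is therefore tight. A decomposition into universally tight pieces certifies nothing by itself: admissible transverse surgery likewise reglues a universally tight solid torus along a pre-Lagrangian torus, and the main theorem of this very paper (Theorem~\ref{thm:main}) exhibits examples where that operation produces an overtwisted manifold. Nor does the surgered manifold come equipped with an open book satisfying the FDTC hypothesis, so you cannot simply reapply the criterion. The paper's proof of the Legendrian surgery statement is of a completely different nature: by \cite{ch}, the contact homology differential graded algebra $(\mathcal{A}(\alpha),\partial)$ of $(M,\xi)$ admits an augmentation; the Stein $2$-handle cobordism attached along the Legendrian link induces a unital chain map from the DGA of the surgered manifold to $(\mathcal{A}(\alpha),\partial)$, and composing this map with the augmentation yields an augmentation for the surgered manifold's DGA. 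Hence its contact homology is nonzero, and it cannot be overtwisted by Yau's vanishing theorem \cite{yau}. No local convex-surface bookkeeping can substitute for this functoriality argument.

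Your Steps 1--2 also mischaracterize what \cite{ch} supplies. Their input here is not a convex-torus decomposition with slopes ``lining up''; it is the holomorphic-curve statement that, under the FDTC hypothesis, the contact homology DGA admits an augmentation, whence $CH(M,\xi)$ surjects onto $\Z/2$ and $(M,\xi)$ is tight. Universal tightness is then deduced not by controlling the universal cover via the suspension flow, but by residual finiteness of $3$-manifold groups (as in \cite{hkm5}): if the universal cover were overtwisted, some finite cover would be, while every finite cover is supported by the pulled-back open book, which again satisfies the FDTC hypothesis and is therefore tight by the same augmentation argument. Your assertion that the slope conditions prevent overtwisted disks or Giroux torsion ``even after passing to any finite or infinite cover'' has no proof behind it as stated, and the ``technical heart'' you defer to \cite{ch} (behavior of FDTCs under lifts to the universal cover) is not what their argument does or needs.
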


Below, we describe how Theorem  \ref{thm:utightfdtc} follows from work of Colin and Honda in \cite{ch}. Their work involves Eliashberg and Hofer's {\em contact homology}. We will need only the following basic features of contact homology; for more details, see \cite{Eliashberg}.
\begin{enumerate}
\item To a closed contact manifold $(M,\xi)$ with a generic contact 1-form $\alpha$, one can associate a differential graded algebra $(\mathcal{A}(\alpha),\partial)$ whose homology $CH(M,\xi)$ depends only on $(M,\xi)$. This is the \emph{contact homology} of $(M,\xi)$.
\item Given an exact symplectic manifold $(X,d\lambda)$ with $\partial X= M_1\cup -M_2$ and $\alpha_1=\lambda|_{M_1}$ and $\alpha_2=\lambda|_{-M_2}$ positive contact forms on $M_1$ and $M_2$, respectively, 
 there is a chain map $\Phi_X:(\mathcal{A}(\alpha_1),\partial_1)\to (\mathcal{A}(\alpha_2), \partial_2)$ that sends $1$ to $1$,  \cite{Eliashberg}.
\item If $(M,\xi)$ is overtwisted, then $CH(M,\xi)=0$, \cite{yau}.
\end{enumerate}
Lastly, recall that an {\em augmentation} of a differential graded algebra $(\mathcal{A},\partial)$ is a chain map $\epsilon:(\mathcal{A},\partial)\to (\Z/2,0)$ that sends $1$ to $1$, where $(\Z/2, 0)$ is the trivial differential graded algebra.

\begin{proof}[Proof of Theorem \ref{thm:utightfdtc}]
Suppose $(M,\xi)$ is a contact manifold supported by an open book $(S,\phi)$ which satisfies the hypotheses in Theorem \ref{thm:utightfdtc}. In \cite[Theorem 2.3 (1)]{ch}, Colin and Honda show that the differential graded algebra $(\mathcal{A}(\alpha),\partial)$ for the contact homology of $(M,\xi)$ admits an augmentation for any  generic contact 1-form $\alpha$ defining $\xi$. (In \cite{ch}, this theorem is stated for open books with connected binding, but this is only for notational convenience; their proof works just as well for open books with multiple binding components.) It follows from this theorem that $CH(M,\xi)$ surjects onto $\Z/2$. In particular, $CH(M,\xi)$ is non-trivial, which implies that $(M,\xi)$ is tight by property (3) above.

The fact that 3-manifold groups are residually finite implies that if the universal cover is overtwisted then so is some finite cover, as detailed in \cite{hkm5}. So, to prove the universal tightness statement in Theorem \ref{thm:utightfdtc}, Colin and Honda need only show that all finite covers are tight. But, for any finite cover, it is easy to see that the open book $(S,\phi)$ pulls back to a compatible open book which also satisfies the hypotheses of Theorem \ref{thm:utightfdtc}. This establishes the first statement of the theorem. 

For the statement about Legendrian surgery, suppose that $(M',\xi')$ is obtained from $(M,\xi)$ by Legendrian surgery on some Legendrian link. Property (2) implies that the associated Stein cobordism $X$ from $M$ to $M'$ induces a nontrivial chain map $\Phi_X: (\mathcal{A}(\alpha'),\partial')\to (\mathcal{A}(\alpha),\partial)$, where $\alpha$ and $\alpha'$ are the 1-forms coming from the exact symplectic form on $X$. Composing this chain map with the augmentation of $(\mathcal{A}(\alpha),\partial)$, we obtain an augmentation of $(\mathcal{A}(\alpha'),\partial')$, which implies that $\xi'$ is not overtwisted.
\end{proof}

We now return to our examples.

\begin{proof}[Proof of Proposition \ref{prop:utight}]
Proposition \ref{prop:utight} follows immediately from Theorem \ref{thm:utightfdtc} even without determining the attracting fixed points  on $\partial T$ of the pseudo-Anosov diffeomorphisms isotopic to $\psi_{n,k_1,k_2}$.  
That said, it is easy to check, by examining the train track for $\psi_n$ (constructed as in \cite{penner}), that each of $B_1$ and $B_2$ contains exactly one such fixed point. 
\end{proof}

Below, we establish a couple additional properties of the manifolds $M_{n,k_1,k_2}$. 

\begin{lemma}\label{lem:qhs3}
For each $n\not\equiv 0\text{ modulo }2$, $M_{n,k_1,k_2}$ is a rational homology 3-sphere for all but finitely many values of $k_1+k_2$.
\end{lemma}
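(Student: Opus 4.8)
\textbf{Proof proposal for Lemma~\ref{lem:qhs3}.}

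The plan is to compute $H_1(M_{n,k_1,k_2};\Z)$ directly from the open book $(T,\psi_{n,k_1,k_2})$ and show that its order is a nonzero polynomial in $k_1+k_2$ (for fixed $n$ odd), hence nonzero for all but finitely many values of $k_1+k_2$. Recall that for an open book $(S,\phi)$ with $S$ having first homology of rank $2g+b-1$ (here $g=1$, $b=2$, so rank $3$), the manifold $M_{S,\phi}$ has $H_1$ presented by $I - \phi_*$ acting on $H_1(S;\Z)$, together with relations coming from the boundary classes; more precisely, one uses the standard Mayer--Vietoris/Wang-sequence computation $H_1(M_{S,\phi};\Z) = \mathrm{coker}(I - \phi_* \colon H_1(S) \to H_1(S))$ when the binding is nullhomologous, and in general one must be careful about the binding components. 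So the first step is to fix a basis for $H_1(T;\Z) \cong \Z^3$ adapted to the curves $a,b,c,d,\delta_1,\delta_2$ of Figure~\ref{fig:OB}, and write down the $3\times 3$ integer matrix for $(\psi_{n,k_1,k_2})_* = (D_{\delta_1}^{k_1} D_{\delta_2}^{k_2} \psi^n)_*$. Since $\delta_1,\delta_2$ are boundary-parallel, $D_{\delta_i}$ acts trivially on $H_1(T;\Z)$ (boundary-parallel Dehn twists are homologically trivial), so in fact $(\psi_{n,k_1,k_2})_* = (\psi^n)_* = ((\psi)_*)^n$ on $H_1(T;\Z)$ — the $k_i$ do \emph{not} enter the homology of the closed-up surface directly. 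This tells me the $k_i$-dependence of $H_1(M_{n,k_1,k_2})$ must come entirely from the binding/page-gluing relations, i.e. from how the filling of the two binding components interacts with the framing twisted by $D_{\delta_1}^{k_1}D_{\delta_2}^{k_2}$.

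The cleaner route, then, is surgery-theoretic: realize $M_{n,k_1,k_2}$ as surgery on $M_{n,0,0}$ along the two binding components $B_1, B_2$ of $(T,\psi_n)$. Adding the twists $D_{\delta_i}^{k_i}$ to the monodromy has the effect of a $(-1/k_i)$-surgery (with respect to the page framing) on $B_i$, by the standard relationship between boundary twisting and Dehn surgery on binding components (this is exactly the kind of statement underlying Theorem~\ref{thm:cappingsurgery} and the discussion of capping off). So $M_{n,k_1,k_2}$ is obtained from $M_{n,0,0}$ by $(-1/k_1)$- and $(-1/k_2)$-surgery on $B_1$ and $B_2$. Now I use the surgery formula for $H_1$: $|H_1(M_{n,k_1,k_2})|$ (when finite) is computed as the absolute value of the determinant of the linking matrix of a surgery diagram; plugging in surgery coefficients $-1/k_i$ on $B_1, B_2$ together with a surgery presentation of $M_{n,0,0}$, the determinant becomes a polynomial in $k_1, k_2$ whose leading behavior is controlled by the $k_1 k_2$ and linear terms, with coefficients expressible via the linking numbers $\mathrm{lk}(B_1,B_2)$ and the self-framings, i.e. via $2\times 2$ minors of $I - (\psi_n)_*$-type data. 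The hypothesis $n \not\equiv 0 \pmod 2$ should be precisely what guarantees that the relevant minor (the one multiplying the $(k_1+k_2)$ term, after noting the $\iota$-symmetry forces the coefficients of $k_1$ and $k_2$ to agree so only $k_1+k_2$ appears) is nonzero; for $n$ even some cancellation or extra symmetry of $(\psi_n)_*$ would kill it. Once I know the order of $H_1$ is a non-identically-zero polynomial $P(k_1+k_2)$, it vanishes for at most $\deg P$ values of $k_1+k_2$, and for all other values $H_1$ is finite, i.e. $M_{n,k_1,k_2}$ is a rational homology sphere.

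Concretely the steps are: (i) compute $(\psi)_* \colon \Z^3 \to \Z^3$ from $\psi = D_aD_b^{-1}D_cD_d^{-1}$ using the Picard--Lefschetz formula $D_\gamma(x) = x + \langle x,\gamma\rangle\gamma$, and hence $(\psi_n)_* = ((\psi)_*)^n$; (ii) identify $M_{n,k_1,k_2}$ as $(-1/k_i)$-surgery on the two bindings of $(T,\psi_n)$ inside $M_{n,0,0}$, justified by the standard monodromy-twist $=$ binding-surgery dictionary; (iii) write $H_1(M_{n,k_1,k_2})$ as the cokernel of an explicit integer matrix whose entries are affine-linear in $k_1, k_2$, with the off-diagonal/coupling entries coming from the intersection form data of the bindings; (iv) compute its determinant as a polynomial in $k_1+k_2$ (using the $\iota$-symmetry to reduce two variables to one), and (v) check that the top-degree coefficient is nonzero exactly when $n$ is odd, by a direct computation with the matrix $((\psi)_*)^n$. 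The main obstacle I anticipate is step (ii)/(iii): getting the framing conventions exactly right so that the boundary twists $D_{\delta_i}^{k_i}$ translate into the correct surgery coefficients and the correct coupling term in the presentation matrix — a sign error or a longitude/meridian swap there would change whether it is $n$ odd or $n$ even that works. A safe fallback is to bypass surgery entirely and do the Mayer--Vietoris computation for $M_{S,\phi}$ with $S = T$ directly, tracking the two boundary tori and their meridians explicitly; this is more bookkeeping but completely mechanical once the basis and $(\psi_n)_*$ are fixed.
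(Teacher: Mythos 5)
Your general scheme for the first half of the argument --- present $H_1(M_{n,k_1,k_2})$ by a matrix whose entries are affine--linear in $k_1,k_2$ (coming from the boundary twists viewed as $(-1/k_i)$--surgeries on the bindings), so that $|H_1|$ is a polynomial whose non-vanishing gives the lemma --- is the same as the paper's. But there are two genuine gaps. The first is your reduction from two variables to one: the $\iota$--symmetry only shows the polynomial is symmetric under $k_1\leftrightarrow k_2$, and a symmetric bilinear term such as $k_1k_2$ is \emph{not} a function of $k_1+k_2$; if such a term survived, the vanishing locus could meet every value of $k_1+k_2$ (e.g.\ along $k_1=0$), and the statement as phrased would not follow. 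The paper gets the only-$(k_1+k_2)$ dependence from a different observation: $\delta_1$ and $\delta_2$ are homologous in $T$, so precomposing the monodromy with $D_{\delta_1}D_{\delta_2}^{-1}$, which changes $(k_1,k_2)$ to $(k_1+1,k_2-1)$, does not change the first homology.

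The second, more serious gap is that the decisive step --- showing the polynomial is not identically zero when $n$ is odd --- is never carried out. You only anticipate that ``the relevant minor is nonzero exactly when $n$ is odd,'' with no computation and no reason why the parity of $n$ should control a leading coefficient of your determinant; it is entirely possible that the leading coefficient is nonzero for all $n$, or vanishes for some odd $n$ with the non-vanishing coming from lower-order terms, so the claimed dichotomy is not a safe target. In the paper the parity of $n$ enters through a completely different mechanism, and this is what the involution $\iota$ is actually used for: evaluating at the single point $(k_1,k_2)=(0,0)$, one uses $T/\iota\cong D^2$ (with four branch points) and the chain relation $D_d\simeq (D_aD_b)^6$ to identify $M_{n,0,0}$ with the double cover of $S^3$ branched over the closure of the $4$--braid $(\sigma_1\sigma_2^{-1}\sigma_3\cdot(\sigma_1\sigma_2)^{-6})^n$, which is a knot precisely when $n$ is odd; the double branched cover of a knot has $|H_1|$ equal to the knot determinant, which is odd and in particular nonzero. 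That single evaluation shows the one-variable polynomial in $k_1+k_2$ is not identically zero, which is all that is needed. Without some such argument (or an honest completion of your step (v)), your proposal does not establish the lemma.
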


\begin{proof}
Let $|H_1(M_{n,k_1,k_2};\mathbb{Z})|$ denote the order of $H_1(M_{n,k_1,k_2};\mathbb{Z})$ when this group is finite, and zero when it is infinite. It is easy to see (e.g., from a surgery presentation) that, after fixing $n$ and $k_1$, this integer is a polynomial function of $k_2$; in fact, it depends only on $n$ and the sum $k_1+k_2$ (composing with $D_{\delta_1}D_{\delta_2}^{-1}$ does not affect the first homology since $\delta_1$ and $\delta_2$ are homologous in $T$). Therefore, for a fixed $n$, $|H_1(M_{n,k_1,k_2};\mathbb{Z})|$ is a polynomial function of $k_1+k_2$. To prove the lemma, it thus suffices to show that $|H_1(M_{n,k_1,k_2};\mathbb{Z})|$ is non-zero for some $k_1$ and $k_2$ whenever $n\not\equiv 0\text{ modulo }2$.

Observe that $\psi_{n,0,0}$ commutes with the rotation $\iota$ described above. Since $T/\iota\cong D^2$ and $\iota$ fixes 4 points on $T$, the manifold $M_{n,0,0}$ is the double cover of $S^3$ branched along some closed 4-braid $\beta_n$. Note that $D_d$ is isotopic to $(D_aD_b)^6$; therefore, $\psi_{n,0,0}$ may be expressed as \[( D_aD_b^{-1}D_c\cdot(D_aD_b)^{-6})^n,\] which implies that $\beta_n$ is the closure of the braid given by \[(  \sigma_1\sigma_2^{-1}\sigma_3\cdot (\sigma_1\sigma_2)^{-6})^n.\] It is easy to check that $\beta_n$ is a knot for $n\not\equiv 0\text{ modulo 2}$; hence, $|H_1(M_{n,0,0};\mathbb{Z})|$ is non-zero (indeed, odd) for these values of $n$.
\end{proof}

The following oddly phrased lemma will be useful in the next section.

\begin{lemma}\label{lem:hyperbolic}
For each $n\geq 14$, there exist infinitely many pairs $(k_1,k_2)$ with $2\leq k_2\leq n$ such that $M_{n,k_1,k_2}$ is hyperbolic. 
\end{lemma}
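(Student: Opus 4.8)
The strategy is to exploit the two parameters $n$ and $k_1$ (with $k_2$ held in the allowed range $2\le k_2\le n$) and to view the family $\{M_{n,k_1,k_2}\}$ through Dehn surgery / Dehn filling on a fixed manifold. First I would fix $n\ge 14$ and observe that $\psi_{n,k_1,k_2} = D_{\delta_1}^{k_1}D_{\delta_2}^{k_2}\cdot \psi^n$, so that the mapping torus of $\psi^n$ (with its two binding components removed, i.e.\ the complement $M^\circ$ of the two bindings in $M_{n,0,0}$, or better the open-book complement) is a single fixed 3-manifold $Y_n$ with two cusps, and $M_{n,k_1,k_2}$ is obtained from $Y_n$ by Dehn filling the two boundary tori with slopes determined by $k_1$ and $k_2$ (the extra boundary-parallel twists $D_{\delta_i}^{k_i}$ change the filling slope linearly in $k_i$). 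The point of taking $n\ge 14$ is that $\psi^n$ is pseudo-Anosov with $\psi_0^n$ genuinely pseudo-Anosov — and more importantly, for $n$ large the mapping torus of $\psi_0^n$ is itself hyperbolic (by Thurston's hyperbolization for fibered 3-manifolds with pseudo-Anosov monodromy), hence $Y_n$ is a finite-volume hyperbolic 3-manifold with two cusps.

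Granting that $Y_n$ is hyperbolic with two cusps, Thurston's hyperbolic Dehn surgery theorem then says that all but finitely many Dehn fillings of $Y_n$ are hyperbolic: there is a finite exceptional set of slopes on each cusp, and outside of it the filled manifold is hyperbolic. Since the filling slope on the $i$-th cusp varies (injectively, or at least with finite fibers) as $k_i$ ranges over the integers, the set of pairs $(k_1,k_2)$ for which $M_{n,k_1,k_2}$ fails to be hyperbolic is contained in a finite union of "coordinate lines" $\{k_1 = \text{const}\}\cup\{k_2=\text{const}\}$. In particular, for all but finitely many values of $k_1$ and all $k_2$ outside a fixed finite set, $M_{n,k_1,k_2}$ is hyperbolic. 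Since we are free to choose $k_2$ in the infinite-looking band $2\le k_2\le n$ — and for $n\ge 14$ there are at least $n-1\ge 13$ available values, while only finitely many are excluded once $n$ is fixed (one can even increase $n$ if needed, but the statement fixes $n\ge 14$, so one must check the exceptional set on the $k_2$-cusp has size $< n-1$, which is automatic for $n$ large by the hyperbolic Dehn surgery theorem applied once) — there is at least one, hence by varying $k_1$ over the cofinitely many good values, infinitely many, admissible pairs $(k_1,k_2)$ with $2\le k_2\le n$ and $M_{n,k_1,k_2}$ hyperbolic.

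The main obstacle I expect is establishing cleanly that $Y_n$ (the relevant two-cusped "parent" manifold from which the $M_{n,k_1,k_2}$ are obtained by filling) is actually hyperbolic, and identifying precisely which manifold it is. One has to be careful: the surgery description must realize \emph{both} $k_1$ and $k_2$ as filling slopes on a \emph{single} fixed manifold, which means the correct $Y_n$ is roughly the complement of the binding $B_1\cup B_2$ inside $M_{n,0,0}$ — equivalently the mapping torus of $\psi^n$ on the once-punctured-or-twice-punctured surface, suitably interpreted — and one needs $\psi^n$ (freely isotopic to $\psi_0^n$, pseudo-Anosov) to have hyperbolic mapping torus. This follows from Thurston, but verifying that the puncture structure doesn't introduce essential tori (i.e.\ that $Y_n$ is not toroidal or Seifert-fibered) requires knowing $\psi_0$ has no periodic or reducible behavior compatible with the cusps — which is fine since $\psi_0$ is pseudo-Anosov, but should be stated. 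A secondary, more bookkeeping-level obstacle is confirming that the map $k_i \mapsto (\text{filling slope on cusp } i)$ has finite (indeed singleton) fibers, so that "cofinitely many slopes are hyperbolic" translates to "cofinitely many $k_i$ give hyperbolic manifolds"; this is immediate because adding $D_{\delta_i}$ changes the slope by a fixed primitive amount. Given those, the counting argument — finitely many bad $k_2\in[2,n]$, cofinitely many good $k_1\in\mathbb Z$ — yields infinitely many hyperbolic $M_{n,k_1,k_2}$, as claimed.
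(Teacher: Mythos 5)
Your overall setup matches the paper's: the complement of the two binding circles is the mapping torus of the pseudo-Anosov $\psi^n$, hence a finite-volume hyperbolic manifold with two cusps, and $M_{n,k_1,k_2}$ is recovered by filling the two cusps along slopes ($-1/k_1$ and $-1/k_2$) that vary injectively with $k_1,k_2$. The gap is in the counting step, and it is exactly the point where the paper does something you don't. Thurston's hyperbolic Dehn surgery theorem only tells you that the exceptional set of slopes on each cusp is \emph{finite}, with no effective bound; but the lemma constrains $k_2$ to the fixed finite window $\{2,\dots,n\}$, which has only $n-1\geq 13$ elements. For a fixed $n$ (say $n=14$) nothing in Thurston's theorem rules out the exceptional set on the $B_2$-cusp having $13$ or more slopes, in which case every admissible $k_2$ could be bad. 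You notice this yourself, but then dismiss it with ``automatic for $n$ large'' --- that does not address the statement, which must hold for \emph{every} fixed $n\geq 14$, and it is not justified even heuristically, since the parent manifold $Y_n$ changes with $n$, so taking $n$ large does not shrink the exceptional set relevant to a smaller $n$.

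The paper closes this gap with a uniform quantitative input: first it uses Thurston to get infinitely many $k_1$ for which filling only the $B_1$-cusp yields a one-cusped hyperbolic manifold, and then it invokes Agol's theorem (``Bounds on exceptional Dehn filling'') that a one-cusped finite-volume hyperbolic $3$-manifold has at most $12$ exceptional filling slopes. Since $n-1\geq 13>12$, for each such $k_1$ at least one $k_2\in\{2,\dots,n\}$ gives a hyperbolic $M_{n,k_1,k_2}$, producing infinitely many pairs. This universal bound (independent of which $k_1$-filling was chosen and of $n$) is precisely what your argument is missing, and it also explains the otherwise mysterious hypothesis $n\geq 14$. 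To repair your proof, replace ``finitely many exceptional slopes on the second cusp'' by Agol's bound of $12$ applied to the one-cusped manifolds obtained after the first filling.
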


\begin{proof}
Since $\psi_{n,0,0}$ is pseudo-Anosov, its mapping torus is hyperbolic. The manifold $M_{n,k_1,k_2}$ is obtained from this mapping cylinder by filling the cusps corresponding to $B_1$ and $B_2$ along the slopes $-1/k_1$ and $-1/k_2$, respectively. According to Thurston's Dehn Surgery Theorem, all but finitely many fillings of the first cusp are hyperbolic \cite{th1}. Moreover, for each hyperbolic filling of the first cusp, all but at most 12 fillings of the second cusp are hyperbolic as well \cite{agol}. The lemma follows.
\end{proof}

\section{Capping off open books}
\label{sec:cappingoff}

Let $(S,\phi)$ be an open book with at least two binding components, and let $(\widehat S,\widehat \phi)$ denote the open book obtained by capping off one of the boundary components of $S$, as in the Introduction. In this section, we use the main result of \cite{bald5} to prove Theorems \ref{otcapoff}, \ref{thm:atoroidalcneq0} and \ref{thm:fdtc} and Corollary \ref{cor:atoroidaltight}. We then show that capping off can be viewed as admissible transverse surgery, proving Theorems \ref{thm:main} and \ref{thm:cappingsurgery}.

\subsection{The Map on Heegaard Floer Homology}
\label{ssec:hfmap}
Let $(M_{S,\phi},\xi_{S,\phi})$ denote the contact 3-manifold supported by the open book $(S,\phi)$. Consider the cobordism $W$ from $M_{S,\phi}$ to $M_{\widehat S,\widehat\phi}$ obtained by attaching a $0$-framed 2-handle along the binding component in $M_{S,\phi}$ corresponding to the capped off boundary component of $S$. Given this set up we have the following result.

\begin{theorem}[\rm{\cite[Theorem 1.2]{bald5}}]
\label{thm:nat}
There exists a $\Sc$ structure $\spc$ on $W$ such that \[F_{W,\spc}:\hf(-M_{\widehat S,\widehat\phi})\rightarrow \hf(-M_{S,\phi})\] sends $c(\widehat S,\widehat \phi)$ to $c(S,\phi).$
\end{theorem}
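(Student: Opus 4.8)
\textbf{Proof proposal for Theorem~\ref{thm:nat}.}
The plan is to reduce the statement to the naturality of the contact invariant under Legendrian/$2$-handle attachment, which is the content of \cite{bald5}. The key observation is that the cobordism $W$ is obtained by attaching a single $0$-framed $2$-handle along a binding component $B$ of $(M_{S,\phi},\xi_{S,\phi})$. Since $B$ is a binding component, it is a transverse knot, and one can arrange (by negatively stabilizing the open book away from $B$, which changes neither $\xi_{S,\phi}$ nor $\xi_{\widehat S, \widehat\phi}$ up to isotopy, nor the capped-off open book in an essential way) that the page framing on $B$ agrees with the $0$-framing used to attach the handle; this is exactly the setup in which \cite[Theorem~1.2]{bald5} is stated. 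So the first step is to recall the precise hypotheses of that theorem and check that our $W$ satisfies them.

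The second step is to invoke \cite[Theorem~1.2]{bald5} directly: it produces a $\Sc$ structure $\spc$ on $W$ (the one extending the canonical $\Sc$ structures on the two boundary contact manifolds, in the sense that $\langle c_1(\spc), [\widehat F]\rangle$ takes the value dictated by the adjunction-type constraint) for which the induced map $F_{W,\spc}\colon \hf(-M_{\widehat S,\widehat\phi})\to \hf(-M_{S,\phi})$ carries $c(\widehat S,\widehat\phi)$ to $c(S,\phi)$. Here one uses that the reversed cobordism $-W$, read from $-M_{S,\phi}$ to $-M_{\widehat S,\widehat\phi}$, is precisely the $2$-handle cobordism realizing the ``anti-capping'' operation, and that the Heegaard Floer contact class is natural under the maps induced by such cobordisms when one reverses orientation appropriately. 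In other words, the theorem to be proved is a verbatim restatement of \cite[Theorem~1.2]{bald5} once the dictionary between capping off and $0$-surgery on a binding component is in place; no new argument is required beyond assembling this dictionary.

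The main (and essentially only) obstacle is bookkeeping: one must be careful that the $\Sc$ structure asserted to exist is genuinely canonical enough to be useful later (it will be the structure with respect to which the map is nonzero whenever $c(S,\phi)\neq 0$), and that the orientation conventions relating $W$, $-W$, $M_{S,\phi}$ and $M_{\widehat S,\widehat\phi}$ are consistent with those of \cite{bald5} and of the Ozsv\'ath--Szab\'o contact invariant. I would therefore spend the bulk of the write-up verifying (a) that capping off a boundary component of $S$ corresponds topologically to $0$-surgery on the associated binding component, with the page framing as the $0$-framing, and (b) that $F_{W,\spc}$ as defined via the standard cobordism maps on $\hf$ agrees with the map in \cite[Theorem~1.2]{bald5}. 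Once (a) and (b) are in hand, the conclusion—that $F_{W,\spc}$ sends $c(\widehat S,\widehat\phi)$ to $c(S,\phi)$—is immediate from that theorem, and the proof is complete.
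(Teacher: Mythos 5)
Your approach is the same as the paper's: Theorem~\ref{thm:nat} is not proved in this paper at all, but is simply quoted as \cite[Theorem 1.2]{bald5}, and your proposal likewise reduces to invoking that theorem directly after checking that $W$ is exactly the $0$-framed $2$-handle cobordism along a binding component considered there. One correction, though: your parenthetical claim that one can \emph{negatively} stabilize the open book away from $B$ ``without changing $\xi_{S,\phi}$ up to isotopy'' is false --- only \emph{positive} stabilization preserves the supported contact structure (negative stabilization typically produces an overtwisted one). Fortunately this maneuver is unnecessary: the $0$-framing in the definition of $W$ is by convention the page framing of the binding component, so the hypotheses of \cite[Theorem 1.2]{bald5} hold as stated and no stabilization or framing adjustment is required.
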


It follows immediately that $c(S,\phi)$ vanishes whenever $c(\widehat S,\widehat \phi)$ does. This prompted Question~1.4 from \cite{bald5}, which asks whether $\xi_{S,\phi}$ is overtwisted whenever $\xi_{\widehat S, \widehat \phi}$ is. Our examples show that the answer is ``no". 
\begin{proof}[Proof of Theorem~\ref{otcapoff}]
Consider the open book $(T,\psi_{n,k_1,k_2})$, where $2\leq k_2\leq n$ and $2\leq k_1$. By Proposition \ref{prop:utight}, $\xi_{n,k_1,k_2}$ is universally tight. Now, cap off the boundary component $B_1$ of $T$. The resulting surface $\widehat T$ has genus one and a single boundary component, and the induced monodromy is the product \[\widehat \psi_{n,k_1,k_2}=D_{\delta}^{k_2-n}\cdot (D_x^2D_y^{-1})^n,\] where $x$, $y$ and $\delta$ are the curves on $\widehat T$ shown in Figure \ref{fig:OB2}. The contact structure supported by $(\widehat T, \widehat \psi_{n,k_1,k_2})$ is overtwisted since the FDTC of $\widehat \psi_{n,k_1,k_2}$ is given by $k_2-n\leq 0$. 
\end{proof}
\begin{figure}[!htbp]

\labellist 
\hair 2pt 
\small
\pinlabel $\delta$ at -6 137
\pinlabel $x$ at 93 25
\pinlabel $y$ at 113 63

\endlabellist 

\begin{center}
\includegraphics[width=3.5cm]{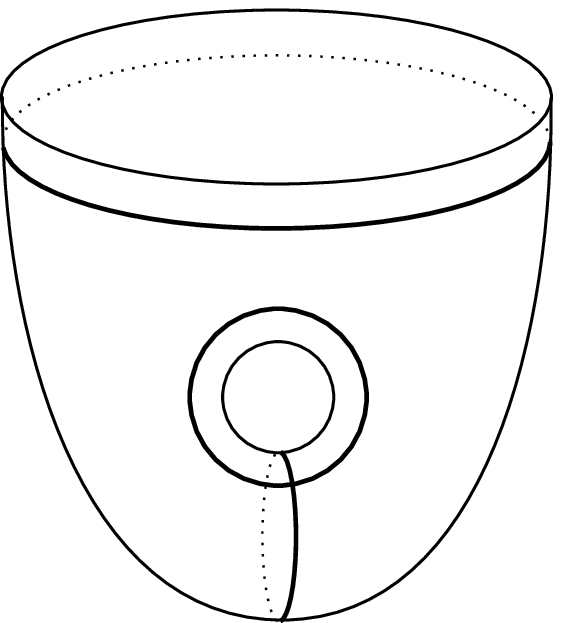}
\caption{\quad The capped off surface $\widehat T$. }
\label{fig:OB2}
\end{center}
\end{figure}

In Remark \ref{rmk:contacthomology}, we commented that the examples considered in the proof of Theorem \ref{otcapoff} can be used to provide examples of contact manifolds with non-vanishing contact homologies which are strongly symplectically cobordant to overtwisted contact manifolds. We justify this below.

\begin{proof}[Justification of Remark \ref{rmk:contacthomology}]
In \cite{gay}, Gay showed that the 2-handle cobordism $W$ from $M_{S,\phi}$ to $M_{\widehat S, \widehat \phi}$ can be equipped with a symplectic form $\omega$, with respect to which $(M_{S,\phi},\xi_{S,\phi})$ is strongly concave and $(M_{\widehat S,\widehat\phi},\xi_{\widehat S,\widehat\phi})$ is weakly convex. Now, suppose $(S,\phi)$ and $(\widehat S,\widehat \phi)$ are the open books $(T,\psi_{n,k_1,k_2})$ and $(\widehat T, \widehat \psi_{n,k_1,k_2})$ in the proof of Theorem \ref{otcapoff}. The manifold $M_{\widehat S, \widehat \phi}$ is a rational homology 3-sphere in this case ({\em cf.}\/ \cite{bald1}), and so the boundary component $(M_{\widehat S,\widehat\phi},\xi_{\widehat S,\widehat\phi})$ of $(W,\omega)$ can be made to be strongly convex \cite{oono}. On the other hand, the proof of Theorem \ref{otcapoff} shows that $\xi_{\widehat S,\widehat\phi}$ is overtwisted. In other words, $(W,\omega)$ is a strong symplectic cobordism from a contact structure with non-trivial contact homology to an overtwisted contact structure. Note that, by the second property of contact homology listed in the previous subsection, $(X,\omega)$ cannot be an exact symplectic cobordism from $M_{S,\phi}$ to $M_{\widehat S, \widehat \phi}$. The non-exactness of this kind of symplectic cobordism was previously demonstrated in \cite{wen2}.
\end{proof}

The proof of Theorem \ref{otcapoff}, together with Theorem \ref{thm:nat} and Lemma \ref{lem:hyperbolic}, implies the following more precise version of Theorem \ref{thm:atoroidalcneq0}.

\begin{proposition}\label{prop:nvanishingc}
For each $n\geq 14$, there exist infinitely many pairs $(k_1,k_2)$ with $2\leq k_2\leq n$ such that $M_{n,k_1,k_2}$ is hyperbolic, $\xi_{n,k_1,k_2}$ is universally tight and $c(\xi_{n,k_1,k_2})=0.$ \qed
\end{proposition}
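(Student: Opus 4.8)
The plan is to assemble Proposition~\ref{prop:nvanishingc} from the three ingredients already in place: Lemma~\ref{lem:hyperbolic} for hyperbolicity, Proposition~\ref{prop:utight} for universal tightness, and a vanishing argument for the Heegaard Floer contact invariant that combines the proof of Theorem~\ref{otcapoff} with the naturality statement of Theorem~\ref{thm:nat}. First I would fix $n\geq 14$ and invoke Lemma~\ref{lem:hyperbolic} to produce an infinite family of pairs $(k_1,k_2)$ with $2\leq k_2\leq n$ for which $M_{n,k_1,k_2}$ is hyperbolic; note that Lemma~\ref{lem:hyperbolic} places no upper bound on $k_1$, so we may freely assume $k_1\geq 2$ as well (discarding at most finitely many of the pairs, or simply observing that the hyperbolic fillings of the first cusp already exclude only finitely many slopes $-1/k_1$). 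This immediately gives the hyperbolicity claim and, by Proposition~\ref{prop:utight}, since $k_1,k_2\geq 2$, the universal tightness of $\xi_{n,k_1,k_2}$.

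Next I would establish $c(\xi_{n,k_1,k_2})=0$. The key observation is that capping off the boundary component $B_1$ of $T$ turns $(T,\psi_{n,k_1,k_2})$ into $(\widehat T,\widehat\psi_{n,k_1,k_2})$, and the computation in the proof of Theorem~\ref{otcapoff} shows $\widehat\psi_{n,k_1,k_2}=D_\delta^{k_2-n}\cdot(D_x^2D_y^{-1})^n$ has fractional Dehn twist coefficient $k_2-n\leq 0$, so $\widehat\psi_{n,k_1,k_2}$ is not right-veering and hence the supported contact structure $\xi_{\widehat T,\widehat\psi_{n,k_1,k_2}}$ is overtwisted by the right-veering criterion of Honda--Kazez--Mati\'c. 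An overtwisted contact structure has vanishing contact invariant, so $c(\widehat T,\widehat\psi_{n,k_1,k_2})=0$. Now apply Theorem~\ref{thm:nat}: the cobordism $W$ obtained by attaching a $0$-framed $2$-handle along the relevant binding component carries a $\Sc$ structure $\spc$ with $F_{W,\spc}(c(\widehat T,\widehat\psi_{n,k_1,k_2}))=c(T,\psi_{n,k_1,k_2})$. Since the input is zero, so is the output, i.e. $c(\xi_{n,k_1,k_2})=0$.

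Finally I would simply record that the infinitely many pairs $(k_1,k_2)$ furnished by Lemma~\ref{lem:hyperbolic} simultaneously satisfy all three conclusions, which is exactly the statement of Proposition~\ref{prop:nvanishingc}. There is essentially no obstacle here: every component of the argument has already been proved earlier in the paper, and the proposition is a bookkeeping assembly of those components. If anything merits a sentence of care, it is the mild point that one must choose the family of pairs so that $k_1\geq 2$ (needed for Proposition~\ref{prop:utight}) while staying inside the hyperbolic range of Lemma~\ref{lem:hyperbolic}; this is harmless since Lemma~\ref{lem:hyperbolic} already produces infinitely many valid $k_1$ and excludes only finitely many slopes for each cusp. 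Because this proposition is purely a corollary of prior results, I would in fact expect the paper to state it without a separate proof — which is consistent with the \qed that follows it in the excerpt.
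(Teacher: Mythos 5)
Your proposal is correct and matches the paper's argument: the paper states Proposition~\ref{prop:nvanishingc} with no separate proof, noting only that it follows from the proof of Theorem~\ref{otcapoff} (universal tightness via Proposition~\ref{prop:utight} and overtwistedness of the capped-off structure), Theorem~\ref{thm:nat} (so the vanishing of the capped-off invariant forces $c(\xi_{n,k_1,k_2})=0$), and Lemma~\ref{lem:hyperbolic} (hyperbolicity), exactly as you assembled them. Your extra sentence ensuring $k_1\geq 2$ within the hyperbolic range is a harmless and accurate clarification of a point the paper leaves implicit.
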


\begin{proof}[Proof of Corollary \ref{cor:atoroidaltight}]
Note that the contact manifolds in Proposition \ref{prop:nvanishingc} are strongly non-fillable since their contact invariants vanish \cite{ghiggini}. Lemma \ref{lem:qhs3} implies that infinitely many of these are rational homology 3-spheres and are therefore non-weakly fillable as well \cite{oono}. 
\end{proof}

We end this subsection with the proof of Theorem \ref{thm:fdtc}.

\begin{proof}[Proof of Theorem \ref{thm:fdtc}]
Lemma \ref{lem:qhs3} implies that for any $N>0$, there exist $n,k_1>N$ such that $M_{n,k_1,n}$ is a rational homology 3-sphere. According to the discussion above, $c(\xi_{n,k_1,n})=0.$ This implies that $\xi_{n,k_1,n}$ is non-weakly fillable and, hence, not the deformation of a co-orientable taut foliation \cite{yasha2, ethur, Etnyre04a}. Moreover, the FDTCs of $\psi_{n,k_1,n}$ around $B_1$ and $B_2$ are $k_1$ and $n$, by Lemma~\ref{lem:fdtc=n}.
\end{proof}

\subsection{Admissible Transverse Surgeries and Capping Off Open Books}
\label{ssec:admissiblecap}

In this subsection, we relate the contact structures obtained by capping off binding components of open books and those obtained by admissible transverse surgeries on these binding components. We first observe that if there is more than one binding component, then $0$-surgery on any single binding component is admissible, as implied by the lemma below.

\begin{lemma}\label{lem:nbhdB}
Suppose $(S, \phi)$ is an open book decomposition for $(M,\xi)$ with more than one binding component, and let $S_s$ be the contact solid torus defined in Subsection \ref{ssec:adm}. Then every binding component of $(S,\phi)$ has a neighborhood $N$ which is contactomorphic to $S_s$ for some $s\in (0,1)$, via a contactomorphism which sends the longitude on $\partial N$ induced by a page of the open book to the preferred longitude on $\partial S_s$. 
\end{lemma}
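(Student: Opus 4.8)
The plan is to produce an explicit model for a neighborhood of a binding component and identify it with some $S_s$ using the standard uniqueness results. First I would recall that a binding component $B$ of $(S,\phi)$ is a transverse knot in $(M_{S,\phi},\xi_{S,\phi})$, and that the page of the open book provides a preferred (Seifert-type) framing on $B$: the longitude $\lambda_{\text{pg}}$ given by $\partial(\text{page})$ near $B$. By the discussion in Subsection~\ref{ssec:adm}, $B$ has a standard neighborhood $N$ contactomorphic to some $S_s$ with $s>0$, where the contactomorphism carries $B$ to the core $\{r=0\}$. The content of the lemma is the sharper claim that, when we demand the contactomorphism match the page longitude with the preferred longitude $\lambda$ on $\partial S_s$, the resulting slope $s$ lies in the interval $(0,1)$.

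The key step is to pin down $s$ in terms of the open book. Near $B$, a standard neighborhood of the binding in a compatible contact structure looks like $\{(r,\theta,\phi): r\le \varepsilon\}$ where $\phi$ is the fibration coordinate (the open book angular coordinate), $\theta$ is the meridional direction, and the pages are the surfaces $\{\phi=\text{const}\}$; the contact structure can be normalized so that the characteristic foliation on $\{r=\varepsilon\}$ has some small positive slope with respect to the basis $(\lambda_{\text{pg}},\mu)$, where $\mu$ bounds a disk and $\lambda_{\text{pg}}=\{\theta=\text{const}\}$ is the page framing. The point is that the dividing curves / characteristic foliation on the boundary of a small neighborhood of the binding are (up to the usual convex perturbation) isotopic to the page longitude, i.e.\ have slope $0$ in the limit as the neighborhood shrinks to $B$; since $B$ is transverse and the contact planes twist in the correct direction, the slope of the characteristic foliation on $\partial N$ is a small \emph{positive} number, and by taking $N$ small we may take it to be any value in $(0,1)$. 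Concretely I would take the local model $\xi_0=\ker(\cos f(r)\,d\phi + f(r)\sin f(r)\,d\theta)$ from Subsection~\ref{ssec:adm} with the roles of the coordinates arranged so that $\phi$ is the fibration coordinate of the open book and $\theta$ the meridian: on $T_s$ the foliation has slope $s$ with respect to $(\lambda,\mu)=(\{\theta=\text{const}\},\{\phi=\text{const}\})$, and for $s\in(0,1)$ this is exactly a neighborhood of the binding of the trivial local open book $(D^2\times[0,1])$-picture, so it glues compatibly. Then Kanda's theorem (Theorem~\ref{Kanda}) — or rather the analogous uniqueness for standard neighborhoods of transverse knots recalled in Subsection~\ref{ssec:adm}, which says that $T$ with a prescribed linear characteristic foliation has a unique contactomorphism type of neighborhood — gives the contactomorphism $N\cong S_s$ matching $\lambda_{\text{pg}}$ with $\lambda$, for any $s\in(0,1)$.

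Finally I would note why the hypothesis of more than one binding component is needed, or rather record that it is used only to guarantee that $0$-surgery is \emph{admissible}: admissible transverse $r$-surgery on the core of $S_s$ is defined for $r<s$, and with $s\in(0,1)$ one has $0<s$, so $r=0$ is allowed — this is the application the lemma is built for (compare Lemma~\ref{lem:admutight} and the hypothesis of Theorem~\ref{thm:cappingsurgery}). The condition that there is another binding component is what lets us realize the full local model near $B$ inside $(M,\xi)$ with the page framing well-defined and the neighborhood genuinely standard; for a connected binding the statement would still make sense but the relevant surgery in later sections is $0$-surgery and we need $s$ strictly positive, which the argument above supplies.

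The main obstacle I expect is bookkeeping the coordinate/framing conventions: making sure that the "page longitude" $\lambda_{\text{pg}}$ really corresponds to $\lambda$ in the model $S_s$ (and not, say, to $\lambda$ shifted by some multiple of $\mu$, which would change the slope by an integer and break the conclusion $s\in(0,1)$), and verifying that the characteristic foliation on the boundary of a small neighborhood of the binding is positively sloped rather than negatively. Both are standard facts about open books and transverse knots — the binding of a compatible open book is a transverse knot whose transverse pushoff framing is the page framing, and the contact structure spirals the "right way" near the binding — but they require care with orientations. Once those conventions are fixed, the rest is an invocation of the uniqueness of standard neighborhoods with a prescribed linear characteristic foliation.
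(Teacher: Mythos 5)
The central step of your argument is the claim that, as a neighborhood of the binding component shrinks, the characteristic foliation on its boundary approaches the page slope $0$ from the positive side, so that every value of $s\in(0,1)$ is realized by a sufficiently small tube. This is not correct, and it is exactly where the content of the lemma lives. In a contact form compatible with the open book (say a Thurston--Winkelnkemper-type form $h_1(r)\,d\phi+h_2(r)\,d\theta$ near the binding, with $\phi$ the binding direction and $\theta$ the fibration coordinate), the characteristic foliation on the torus of radius $r$ limits to the \emph{meridional} slope as $r\to 0$, and on the boundary of a standard binding tube its slope, measured against the page longitude, is a small \emph{negative} number; shrinking the tube pushes the slope toward $-\infty$, not into $(0,1)$. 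To get boundary slope in $(0,1)$ one must instead \emph{thicken} the tube until the contact planes twist past the page direction, and whether such a thickening exists inside $(M,\xi)$ is precisely what has to be proved. Tellingly, your argument never genuinely uses the hypothesis that the binding is disconnected (you even suggest the conclusion would persist for connected binding), but the statement is false in that case: for the disk open book of $(S^3,\xi_{std})$ the page framing is the Seifert framing, and a neighborhood of the binding contactomorphic to $S_s$ with $s\in(0,1)$ would contain a pre-Lagrangian torus of slope $0$, hence a Legendrian unknot with $\tb=0$, violating the Bennequin inequality. So any correct proof must use the second boundary component in an essential way.

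The paper supplies exactly this missing step by building the compatible contact structure explicitly: one takes a primitive $\lambda=i_v\omega$ on the page with $v=-\nabla f$ for a Morse function $f$ having its maximum along the distinguished boundary component and its minima along the \emph{other} boundary components (this is where disconnectedness of $\partial S$ enters -- an area-expanding field admits no interior sinks, so with one boundary component no such $f$ exists), and then enlarges the page by a collar annulus over which the coefficient of $d\theta$ in $\lambda$ changes sign. In the resulting form $K\,dt+t\lambda+(1-t)\phi^*\lambda$, this sign change forces the characteristic foliations on the tori around the binding to sweep through the page slope $0$, so the union of the filling solid torus with that collar is a standard $S_{r_1}$ with $r_1\in(0,1)$ and with the page longitude matching the preferred longitude. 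Your appeal to Kanda's theorem and the uniqueness of standard neighborhoods is fine for identifying the neighborhood once a torus of the desired slope has been exhibited, but it cannot substitute for that existence step.
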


\begin{proof}
Denote the boundary components of $S$ by $B_1,\dots,B_n$. Let $S'$ be the surface obtained from $S$ by removing a collar neighborhood of $B_1$ on which $\phi$ is the identity, so that $S'$ has boundary components $B_1',B_2,\dots,B_n$. Let $\omega$ be a volume form on $S'$, and let $f:S'\rightarrow \R$ be a Morse function which achieves a maximum along $B_1'$, minima along the other $B_i$, and has no interior minima. Let $v=-\nabla f$ be the negative gradient flow of $f$. We can arrange that $v$ has divergence 1 for some volume form $\omega$; {\em i.e.}, that $\mathcal L_v(\omega) = \omega$. We can also arrange that there is some collar neighborhood $N_1'=(-1,-1/2]\times(0,2\pi]$ of $B_1'$ with coordinates $(s, \theta)$ such that $\omega  = ds \wedge d\theta$ and $v = s\frac{\partial}{\partial s}$. Likewise, we can arrange that each $B_i$ for $i=2,\dots,n$ has a collar neighborhood of the form $N_i=(1/2,1]\times(0,2\pi]$ on which $\omega  = ds \wedge d\theta$ and $v = s\frac{\partial}{\partial s}$. Note that $v$ points into $S$ along $B_1'$ and out of $S$ along the other $B_i$. Let $\lambda$ be the contraction $\lambda=i_{v}\omega$. The fact that $v$ has divergence 1 implies that $d\lambda = \omega$. Moreover, $\lambda = sd\theta$ with respect to the $(s,\theta)$ coordinates in the neighborhoods $N_i$. 

Let $A$ be the annulus $A = [-1/2,1]\times (0,2\pi]$. We can think of $S$ as the surface obtained by gluing $A$ to $S'$ along $B_1'=\{-1/2\}\times(0,2\pi]$, and we can think of $B_1$ as the new boundary component $\{1\}\times(0,2\pi]$. This component has collar neighborhood $N_1=N_1'\cup A = (-1/2,1]\times (0,2\pi]$ with coordinates $(s,\theta)$. We extend $\omega$ and $\lambda$ to $N_1$ by $\omega = ds \wedge d\theta$ and $\lambda  = sd\theta$. Note that $\omega$ and $\lambda$ behave near $B_1$ just as they do near the other $B_i$.

Consider the mapping torus $T_{\phi} = S\times[0,1]/(x,1)\sim (\phi(x),0)$. Its boundary consists of the tori $T_i = B_i\times S^1$ for $i=1,\dots,n$. We choose a longitude-meridian coordinate system on each $-T_i$, where the longitude is given by the intersection of $-T_i$ with a fiber $S\times \{t\}$, and the meridian is given by $\{p\}\times S^1$ for some $p\in B_i$. Recall that the manifold $M_{S,\phi}$ is gotten by Dehn filling each boundary component $T_i$ of $T_{\phi}$ with a solid torus $S^1\times D^2$ in such a way that the meridian of $S^1\times D^2$ is glued to the meridian on $-T_i$. 

Consider the 1-form on $T_{\phi}$ defined by $\alpha_K=Kdt+t\lambda+(1-t)\phi^*\lambda$. The fact that $d\lambda$ is a volume form on $S$ implies that $\alpha_K$ is a contact form for $K>0$ large enough. Let $\xi_K$ denote the contact structure on $T_{\phi}$ defined by this contact form. The local behavior of $\lambda$ near the $B_i$ implies that the characteristic foliation of $\xi_K$ on $-T_i$ has slope $s_i<0$. We may therefore extend $\xi_K$ to a contact structure $\xi_{S,\phi}$ on all of $M_{S,\phi}$ by Dehn-filling each $T_i$ as above, using the contact solid tori $S_{s_i}$. The binding components of $(S,\phi)$ are then the core curves $C_i\subset S_{s_i}$. Moreover, our construction of $\xi_{S,\phi}$ is equivalent to the standard construction of the contact structure compatible with the open book $(S,\phi)$. 

To complete the proof of Lemma \ref{lem:nbhdB}, simply observe that $A\times[0,1]/\sim$ is a standard contact thickened torus $S_{s_1,r_1}$, where $r_1>0$. In particular, $S_{r_1}$ is a standard neighborhood of $C_1$. 
\end{proof}

Recall that Theorem \ref{thm:cappingsurgery} states that if an open book $(S,\phi)$ has more than one binding component, then the contact manifold obtained via admissible transverse $0$-surgery on that component (which makes sense, thanks to Lemma \ref{lem:nbhdB}) is supported by the open book $(\widehat S, \widehat \phi)$ obtained from $(S,\phi)$ by capping off the corresponding boundary component of $S$.

\begin{proof}[Proof of Theorem~\ref{thm:cappingsurgery}]
We will use the notation from the proof of Lemma \ref{lem:nbhdB}. Let $\widehat S$ be the surface obtained by capping off $B_1$ with a disk $D$. Let $D'$ be a slightly larger disk in $\widehat S$ containing $D$ such that $\partial D' = \{-1/4\}\times (0,2\pi]\subset A$. Let $(r,\Theta)$ be the standard polar coordinate system on $D'$ for $0\leq r < 9/4$ and $\Theta\in (0,2\pi]$. Where $D'$ and $A$ overlap, their coordinates are related by $s = 2-r$ and $\theta = -\Theta$. Let $\widehat\omega$ and $\widehat\lambda$ be forms on $S$ which agree with the forms $\omega$ and $\lambda$ for $r\geq 17/8$. In particular, in the region $17/8 \leq r \leq 9/4$, which corresponds to $-1/4 \leq s \leq -1/8$, we have $\widehat\omega = ds\wedge d\theta = dr\wedge d\Theta$ and $\widehat\lambda = sd\theta = (r-2)d\Theta$. We extend these forms across the rest of the disk $D'$ by $\widehat\omega = f'(r)dr\wedge d\Theta$ and $\widehat\lambda = f(r)d\Theta$, where $f(r)$ is a smooth, increasing function of $r$ with 
\[f(r) = \begin{cases}
r-2, & 17/8 \leq r \leq 9/4\\
r^2 & r \leq 1/8.
\end{cases}
\]

The manifold $M_{\widehat{S},\widehat{\phi}}$ is then formed from the mapping torus $T_{\widehat{\phi}} = \widehat{S}\times[0,1]/(x,1)\sim (\phi(x),0)$ by Dehn-filling the boundary tori $T_2,\dots,T_n$ as before. The 1-form on $T_{\widehat{\phi}}$ defined by $\widehat{\alpha}_K=Kdt+t\widehat\lambda+(1-t)\widehat\phi^*\widehat\lambda$ is still a contact form for $K$ large enough and, so, defines a contact structure $\widehat\xi_K$. The contact structure $\xi_{\widehat S, \widehat \phi}$ on $M_{\widehat{S},\widehat{\phi}}$ compatible with $(\widehat S, \widehat \phi)$ is then formed from $\widehat\xi_K$ by Dehn-filling the $T_i$ using the contact solid tori $S_{s_i}$ as above. 

Now, let $(M',\xi')$ denote the result of admissible transverse $0$-surgery on the binding $C_1$ of $(S,\phi)$. Our goal is to show that this is contactomorphic to $(M_{\widehat{S},\widehat{\phi}},\xi_{\widehat S, \widehat \phi}).$ Let $S_{r_1}'$ be the contact solid torus in $M'$ obtained from $S_{r_1}$ by this surgery, and let $C_1'\subset S_{r_1}'$ be the dual transverse curve to $C_1$. As discussed in Subsection \ref{ssec:adm}, $S_{r_1}'$ is a standard neighborhood of $C_1'$. Let $U$ be the solid torus $(A\cup D')\times[0,1]/\sim$ in $M_{\widehat S, \widehat\phi}$. By construction, $M_{\widehat S, \widehat \phi}-U = M' - S_{r_1}'$, and the restriction of $\xi_{\widehat S, \widehat \phi}$ to $M_{\widehat S, \widehat \phi}-U$ agrees with the restriction of $\xi'$ to $M' - S_{r_1}'$. The proof is finished by the observation that $U$ is contactomorphic to $S_{r_1}'$, as both are universally tight solid tori with the same characteristic foliations on their pre-Lagrangian boundaries. Moreover, $U$ is pretty clearly a standard neighborhood of the transverse knot $e\times [0,1]/\sim$, where $e$ is the origin of $D'$.
\end{proof}

\begin{proof}[Proof of Theorem~\ref{thm:main}]
As mentioned in the Introduction, our main theorem, which states that admissible transverse surgery does not necessarily preserve tightness, follows immediately from the combination of Theorems~\ref{otcapoff} and ~\ref{thm:cappingsurgery}. 
\end{proof}

We end this subsection and our paper with the proof of Theorem \ref{prop:legadmintervals}, which states that the range of tight, admissible transverse surgeries on a transverse knot can be non-closed and disconnected.

\begin{proof}[Proof of Theorem~\ref{prop:legadmintervals}]
Let $(M_{n,k_1,k_2},\xi_{n,k_1,k_2})$ and $(T,\psi_{n,k_1,k_2})$ be the contact manifolds and open books considered in the proof of Theorem \ref{otcapoff}. In that proof, we saw that $(M_{n,k_1,k_2},\xi_{n,k_1,k_2})$ was universally tight, but the contact manifold obtained by capping off the boundary component $B_1$ was overtwisted. From Lemma~\ref{lem:nbhdB} we know that the corresponding binding component $C_1$ has a standard neighborhood contactomorphic to $S_a$ for some $0<a<1.$ From Theorems~\ref{SimpleSurg} and \ref{thm:extraslopes}, we know that admissible transverse $s$-surgery on $C_1$ can be achieved via Legendrian surgery for all $s<0$ and for a sequence of $0<s<a$ converging to $0$. These surgeries are therefore tight, by Theorem \ref{thm:utightfdtc}. On the other hand, since capping off is equivalent to admissible transverse $0$-surgery, by Theorem \ref{thm:cappingsurgery}, and the capped off contact manifold is overtwisted, $t(K)$ does not contain $0$. Thus, $t(K)$ is non-closed and disconnected. 
\end{proof}


\end{document}